\newcommand{\nc}{\newcommand}
\nc{\les}{\lesssim}
\nc{\nit}{\noindent}
\nc{\nn}{\nonumber}
\nc{\D}{\partial}
\nc{\diff}[2]{\frac{d #1}{d #2}}
\nc{\diffn}[3]{\frac{d^{#3} #1}{d {#2}^{#3}}}
\nc{\pdiff}[2]{\frac{\partial #1}{\partial #2}}
\nc{\pdiffn}[3]{\frac{\partial^{#3} #1}{\partial{#2}^{#3}}}
\nc{\abs}[1] {\lvert #1 \rvert}
\nc{\cAc}{{\cal A}_c}
\nc{\cE}{{\cal E}}
\nc{\cF}{{\cal F}}
\nc{\cP}{{\cal P}}
\nc{\cV}{{\cal V}}
\nc{\cQ}{{\cal Q}}
\nc{\cGin}{{\cal G}_{\rm in}}
\nc{\cGout}{{\cal G}_{\rm out}}
\nc{\cO}{{\cal O}}
\nc{\Lav}{{\cal L}_{\rm av}}
\nc{\cL}{{\cal L}}
\nc{\cB}{{\cal B}}
\nc{\cZ}{{\cal Z}}
\nc{\cR}{{\cal R}}
\nc{\cT}{{\cal T}}
\nc{\cY}{{\cal Y}}
\nc{\cX}{{\cal X}}
\nc{\cXT}{{{\cal X}(T)}}
\nc{\cBT}{{{\cal B}(T)}}
\nc{\vD}{{\vec \mathcal{D}}}
\nc{\efield}{\mathcal{E}}
\nc{\vE}{{\vec \efield}}
\nc{\vB}{{\vec \mathcal{B}}}
\nc{\vH}{{\vec \mathcal{H}}}
\nc{\ty}{{\tilde y}}
\nc{\tu}{{\tilde u}}
\nc{\tV}{{\tilde V}}
\nc{\Pc}{{\bf P_c}}
\nc{\bx}{{\bf x}}
\nc{\bX}{{\bf X}}
\nc{\bXYZ}{{\bf XYZ}}
\nc{\bY}{{\bf Y}}
\nc{\bF}{{\bf F}}
\nc{\bS}{{\bf S}}
\nc{\dV}{{\delta V}}
\nc{\dE}{{\delta E}}
\nc{\TT}{{\Theta}}
\nc{\dPsi}{{\delta\Psi}}
\nc{\order}{{\cal O}}
\nc{\Rout}{R_{\rm out}}
\nc{\eplus}{e_+}
\nc{\eminus}{e_-}
\nc{\epm}{e_\pm}
\nc{\eps}{\varepsilon}
\nc{\vnabla}{{\vec\nabla}}
\nc{\G}{\Gamma}
\nc{\w}{\omega}
\nc{\mh}{h}
\nc{\mg}{g}
\nc{\vphi}{\varphi}
\nc{\tlambda}{\tilde\lambda}
\nc{\be}{\begin{equation}}
\nc{\ee}{\end{equation}}
\nc{\ba}{\begin{eqnarray}}
\nc{\ea}{\end{eqnarray}}
\nc{\g}{\gamma}
\nc{\ol}{\overline}
\newtheorem{theorem}{Theorem}[section]
\newtheorem{lemma}[theorem]{Lemma}
\newtheorem{prop}[theorem]{Proposition}
\newtheorem{corollary}[theorem]{Corollary}
\newtheorem{defin}[theorem]{Definition}
\nc{\pT}{\partial_T}
\nc{\pz}{\partial_z}
\nc{\pt}{\partial_t}
\nc{\la}{\langle}
\nc{\ra}{\rangle}
\nc{\infint}{\int_{-\infty}^{\infty}}
\nc{\halfwidth}{6.5cm}
\nc{\figwidth}{10cm}
\newcommand{\f}{\frac}
\nc{\nlayers}{L} \nc{\nsectors}{M}
\nc{\indicator}{\mathbf{1}}
\nc{\Rhole}{R_{\rm hole}}
\nc{\Rring}{R_{\rm ring}}
\nc{\neff}{n_{\rm eff}}
\nc{\Frem}{F_{\rm rem}}
\nc{\R}{\mathbb R}
\nc{\Z}{\mathbb Z}
\nc{\DD}{\Delta}
\nc{\cD}{\mathcal D}
\nc{\lnorm}{\left\|}
\nc{\rnorm}{\right\|}
\nc{\rnormp}{\right\|_{\ell^{p,\eps}}}
\nc{\rar}{\rightarrow}
\begin{document}

\begin{abstract}

	We investigate $L^1(\R^4)\to L^\infty(\R^4)$ dispersive estimates for the   Schr\"odinger operator $H=-\Delta+V$ when there are obstructions, a resonance or an eigenvalue, at zero energy. In particular, we show  that if there is a  resonance or an eigenvalue at zero energy then there is a time dependent, finite rank operator $F_t$ satisfying $\|F_t\|_{L^1\to L^\infty} \lesssim 1/\log t$ for $t>2$  such that
$$\|e^{itH}P_{ac}-F_t\|_{L^1\to L^\infty} \lesssim t^{-1},\,\,\,\,\,\text{ for } t>2.$$
We also show that the operator $F_t=0$ if there is an
eigenvalue but no resonance at zero energy.  We then
develop analogous dispersive estimates for the 
solution operator to the four dimensional wave equation
with potential.
	
\end{abstract}

\title[Dispersive estimates for Schr\"odinger and wave equations]{\textit{Dispersive estimates for four dimensional
Schr\"{o}dinger and wave equations with obstructions at zero energy}}

\author[Erdo\smash{\u{g}}an, Goldberg, Green]{M. Burak Erdo\smash{\u{g}}an, Michael Goldberg and William~R. Green}
\thanks{The first author was partially supported by NSF grant  DMS-1201872.  The second author was partially supported by NSF grant DMS-1002515. The third author acknowledges the support of an AMS Simons Travel grant.}
\address{Department of Mathematics \\
University of Illinois \\
Urbana, IL 61801, U.S.A.}
\email{berdogan@math.uiuc.edu}
\address{Department of Mathematics\\
University of Cincinnati \\
Cincinnati, OH 45221 U.S.A.}
\email{goldbeml@ucmail.uc.edu}
\address{Department of Mathematics\\
Rose-Hulman Institute of Technology \\
Terre Haute, IN 47803, U.S.A.}
\email{green@rose-hulman.edu}

\maketitle

\section{Introduction}

The free Schr\"odinger evolution on $\R^n$,
$$
	e^{-it\Delta}f(x)= \frac{1}{(4\pi i t)^{\frac{n}{2}}} \int_{\R^n} e^{-i|x-y|^2/4t}f(y)\, dy
$$
maps $L^1(\R^n)$ to $L^\infty(\R^n)$ with norm bounded by
$C_n |t|^{-n/2}$.  This  dispersive estimate for the
Schr\"odinger equation, and the time-decay of solutions it implies provides a
valuable counterpart to the conservation law in $L^2(\R^n)$.

There is a substantial body of work concerning the validity of dispersive estimates
for a Schr\"odinger operator of the form $H = -\Delta + V$, where $V$ is 
a real-valued potential on $\R^n$ decaying at spatial infinity with the assumption that
zero is a regular point of the spectrum of $H$, 
see for example \cite{JSS,Wed,RodSch,GS,Sc2,Gol2,goldberg,CCV,EG1,Gr}.  
Local dispersive estimates, studying the evolution on weighted $L^2(\R^n)$ spaces were studied first,
see \cite{Rauch,JenKat,Jen,Mur,Jen2}.
Where possible,
the estimate is presented in the form
\begin{equation} \label{eq:dispersive}
\lnorm e^{itH}P_{ac}(H)\rnorm_{L^1(\R^n)\to L^\infty(\R^n)} \lesssim |t|^{-n/2}.
\end{equation}
Projection onto the continuous spectrum is needed as the perturbed Schr\"odinger operator
$H$ may possesses  pure point spectrum.
If the potential satisfies a pointwise
bound $|V(x)| \les \la x \ra^{-\beta}$ for some $\beta > 1$, then  the
spectrum of $H$ is purely absolutely continuous on $(0,\infty)$, see
\cite[Theorem XIII.58]{RS1}.  This leaves
two principal areas of concern: a high-energy region when the spectral parameter $\lambda$ satisfies $\lambda>\lambda_1>0$ and a low-energy region $0<\lambda<\lambda_1$.  

It was
observed by the second author and Visan~\cite{GV} in dimensions $n \ge 4$,
that it is possible for the dispersive estimate to fail as $t \to 0$ even for a
bounded compactly supported potential. The failure of the dispersive estimate is a high energy phenomenon.  Positive results have been obtained
in dimensions $n=4,5$ by Cardoso, Cuevas, and Vodev  \cite{CCV}
using semi-classical techniques assuming that
$V$ has $\frac{n-3}{2}+\epsilon$ derivatives, and
by the first and third authors in dimensions $n=5,7$,
\cite{EG1} under the assumption that $V$ is differentiable
up to order $\frac{n-3}{2}$.    The much earlier result of Journ\'e, Soffer, 
Sogge~\cite{JSS} requires that $\widehat{V} \in L^1(\R^n)$ in lieu of a specific
number of derivatives.

Our main focus in this paper is the study of the evolution in four spatial dimensions
when there are obstructions at zero energy.   There are
two types of obstructions at zero energy, both of which can be characterized by non-trivial
distributional solutions of $H\psi=0$.  If $\psi\notin L^2(\R^4)$ but $\la \cdot \ra^{0-}\psi \in L^2(\R^4)$ 
we say there is a resonance at zero energy and if $\psi \in L^2(\R^4)$ we say there is an
eigenvalue at zero energy, see Section~\ref{sec:spec} for a more detailed characterization.
Resonances and eigenvalues occur at zero precisely 
when the resolvents
$$
R_V^{\pm}(\lambda^2)=\lim_{\epsilon \searrow 0} (-\Delta+V-(\lambda^2\pm i\epsilon))^{-1},
$$ 
considered as maps from $\la x\ra^{-1}L^2$ to $\la x \ra L^2$, are unbounded in norm as $\lambda \to 0$.
It is known that in general obstructions at zero lead to a loss of time decay in the dispersive estimate.
Jensen and Kato~\cite{JenKat} showed that in three dimensions, if there is a resonance at zero energy then
the propagator $e^{itH}P_{ac}(H)$ (as an operator between polynomially weighted $L^2(\R^3)$ spaces) has leading order decay of $|t|^{-1/2}$
instead of $|t|^{-3/2}$.  In general the same effect occurs if zero is an eigenvalue, even though $P_{ac}(H)$
explicitly projects away from the associated eigenfunction.

Define a smooth cut-off function $\chi(\lambda)$
with $\chi(\lambda)=1$ if $\lambda<\lambda_1/2$ and
$\chi(\lambda)=0$ if $\lambda>\lambda_1$, for a
sufficiently small $0<\lambda_1\ll 1$.
We prove the following low energy bounds.
\begin{theorem}\label{thm:main}

	Assume that $|V(x)|\les \la x\ra ^{-\beta}$ and that zero is not a regular point of the spectrum of
	$H$.  There exists a time dependent operator $F_t$ of finite rank (at most two) satisfying
	$\lnorm F_t\rnorm_{L^1\to L^\infty} \les 1/\log t$ such that, for $t>2$,
	$$
	\lnorm  e^{itH}\chi(H)P_{ac}(H) - F_t \rnorm_{L^1 \to L^\infty} \les t^{-1}.
	$$
	\begin{enumerate}[i)]
	\item If there is a resonance at zero but no eigenvalue, $F_t$ is rank one provided
	$\beta>4$.
	\item If there is an eigenvalue at zero but no resonance, then $F_t=0$ provided
	$\beta>8$.
	\item If there is an eigenvalue and a resonance at zero, $F_t$ is rank at most two provided
	$\beta>8$.
	\end{enumerate}

\end{theorem}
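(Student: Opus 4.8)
The plan is to start from the spectral representation of the evolution via Stone's formula,
\[
e^{itH}\chi(H)P_{ac}(H) = \frac{1}{2\pi i}\int_0^\infty e^{it\lambda^2}\chi(\lambda)\,\big[R_V^+(\lambda^2)-R_V^-(\lambda^2)\big]\,2\lambda\,d\lambda,
\]
and to analyze the perturbed resolvent near $\lambda=0$ through the symmetric resolvent identity. Writing $V=vUv$ with $v=|V|^{1/2}$ and $U=\operatorname{sign}V$, one has
\[
R_V^\pm(\lambda^2) = R_0^\pm(\lambda^2) - R_0^\pm(\lambda^2)\,v\,M^\pm(\lambda)^{-1}\,v\,R_0^\pm(\lambda^2), \qquad M^\pm(\lambda) = U + vR_0^\pm(\lambda^2)v.
\]
The obstructions at zero energy are encoded precisely in the failure of $M^\pm(\lambda)$ to be invertible as $\lambda\to 0$, so the whole argument reduces to producing a sufficiently precise expansion of $M^\pm(\lambda)^{-1}$ and inserting it into the Stone formula.

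First I would record the low-energy expansion of the free resolvent in four dimensions. Its kernel is an explicit multiple of $\lambda\,|x-y|^{-1}H_1^\pm(\lambda|x-y|)$, and the even dimension forces a logarithm: one finds
\[
R_0^\pm(\lambda^2)(x,y) = \frac{1}{4\pi^2|x-y|^2} + \big(a\log\lambda + b_\pm\big)\lambda^2 + O(\lambda^4\log\lambda),
\]
where the leading term is the kernel $G_0$ of $(-\Delta)^{-1}$. This $\lambda^2\log\lambda$ behavior is the source of the $1/\log t$ loss. Sandwiching with $v$ gives $M^\pm(\lambda) = M_0 + \big(a\log\lambda + b_\pm\big)\lambda^2\,\langle v,\cdot\rangle v + \cdots$, where $M_0 = U + vG_0v$ and the leading correction is rank one. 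The next step is to invert $M^\pm(\lambda)$ by the Jensen--Nenciu / Feshbach procedure, successively orthogonalizing away the kernel of $M_0$. The spectral subspaces are distinguished as in Section~\ref{sec:spec}: the resonance subspace contributes a scalar factor behaving like $(a\log\lambda)^{-1}$, while a genuine $L^2$ eigenvalue contributes a stronger $\lambda^{-2}$ singularity whose coefficient is a finite-rank projection built from the eigenfunctions.

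With $M^\pm(\lambda)^{-1}$ in hand, I would split it into a leading singular part and a remainder. Substituting the leading part into the Stone formula and evaluating the resulting oscillatory integral yields $F_t$: in the pure resonance case the factor $(a\log\lambda)^{-1}$ integrated against $e^{it\lambda^2}\,2\lambda\,d\lambda$ over $\lambda\lesssim 1$ concentrates near $\lambda\sim t^{-1/2}$, where $\log\lambda\sim -\tfrac12\log t$, producing a rank-one $F_t$ with $\|F_t\|_{L^1\to L^\infty}\lesssim 1/\log t$ (this needs only $\beta>4$). The remainder, after subtraction of the leading term, is estimated to be $O(t^{-1})$ by integration by parts in $\lambda$, exploiting the $e^{it\lambda^2}$ oscillation together with bounds on the $\lambda$-derivatives of the error terms in the resolvent expansion; the number of derivatives available, and hence the decay threshold on $V$, is what dictates $\beta$. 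In the eigenvalue cases the coefficient of the $\lambda^{-2}$ singularity is a projection onto the eigenspace, and here the crucial point --- and the main obstacle --- is that the orthogonality relations satisfied by the zero eigenfunctions (equivalently, vanishing of suitable moments of $V\psi$) force the would-be leading $\lambda^{-2}$ contribution to cancel, so that in the pure eigenvalue case $F_t=0$ and only a $t^{-1}$ remainder survives. Tracking these cancellations requires carrying the resolvent expansion to higher order and controlling error terms several powers of $\lambda$ smaller, which is exactly why the eigenvalue cases demand the stronger decay $\beta>8$. The hardest part of the argument is this bookkeeping: isolating which algebraic combinations survive in each of the three cases and proving that everything not captured by $F_t$ decays at the integrable rate $t^{-1}$.
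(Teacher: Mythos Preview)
Your overall architecture matches the paper's: Stone formula, symmetric resolvent identity, Jensen--Nenciu/Feshbach inversion of $M^\pm(\lambda)$, extraction of a singular leading piece, and integration by parts on the remainder.  But two of the mechanisms you describe are not quite right, and a third technical point is missing.

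First, the resonance singularity is stronger than you state.  The inverse $B_\pm(\lambda)^{-1}$ does not behave like $(a\log\lambda)^{-1}$; it is $f^\pm(\lambda)=\big(\lambda^2(a\log\lambda+z)\big)^{-1}$ with $z\in\mathbb C\setminus\R$.  With your claimed $(\log\lambda)^{-1}$ the integrand $\lambda\chi(\lambda)/\log\lambda$ and its derivative are integrable, so one integration by parts would already give $t^{-1}$ decay, and no $1/\log t$ term would appear.  In the paper the $1/\log t$ arises from the \emph{difference} $f^+(\lambda)-f^-(\lambda)=\widetilde O_1\big((\lambda\log\lambda)^{-2}\big)$, where the real leading parts cancel and the surviving imaginary part is what Lemma~\ref{log decay} integrates to $O(1/\log t)$.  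This $\pm$ cancellation, organized via the algebraic identity $\prod A_k^+-\prod A_k^-=\sum(\cdots)(A_\ell^+-A_\ell^-)(\cdots)$, is doing the essential work and is absent from your sketch.

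Second, the reason $F_t=0$ in the pure-eigenvalue case is not that orthogonality kills the $\lambda^{-2}$ term in $M^\pm(\lambda)^{-1}$.  The expansion is genuinely $M^\pm(\lambda)^{-1}=-D_2/\lambda^2+\cdots$; the orthogonality $PS_2=0$ only removes the $\widetilde g_1^\pm$-terms.  What makes $F_t$ vanish is again the $\pm$ structure: $D_2/\lambda^2$ is identical on both branches so it drops out of $M^+(\lambda)^{-1}-M^-(\lambda)^{-1}$, while in the terms where the difference lands on a free resolvent one uses $R_0^+-R_0^-=c\lambda^2+\widetilde O_1(\lambda^4r^2)$, whose $\lambda^2$ exactly compensates the $\lambda^{-2}$ of $D_2$.

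Finally, in four dimensions a single $R_0^\pm$ does not map $L^1\to L^2$ (the kernel is $\sim|x-y|^{-2}$), so the identity $R_V=R_0-R_0vM^{-1}vR_0$ is not enough for an $L^1\to L^\infty$ bound.  The paper iterates once more to place two free resolvents on each side of $M^\pm(\lambda)^{-1}$ (see \eqref{bs finite}--\eqref{bs tail} and Lemma~\ref{lem:locL2}); you will need this step as well.
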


A precise set of definitions for resonances is provided in
Definition~\ref{resondef} below.  The above statements paraphrase Theorems~\ref{thm:res1},
\ref{thm:res2}, and~\ref{thm:res3}.  These can be combined with a high energy estimate,
see \cite{CCV}, to obtain estimates for $\lnorm e^{itH} P_{ac}(H)- F_t\rnorm_{L^1 \to L^\infty}$,
assuming $V$ is H\"older continuous of order greater than $\f12$ and satisfies
$|V(x)-V(y)|/|x-y|^{\f12+}< C\la x\ra^{-4}$ whenever $|x-y| < 1$.
Our results can be seen as translation-invariant versions of the local 
dispersive estimates proven
by Jensen in \cite{Jen2}.

The primary global dispersive estimates when zero is
not regular
are due to Yajima~\cite{Yaj} and the first author and Schlag~\cite{ES}
 in three dimensions, the first and third authors~\cite{EG2} in two dimensions, and
the second author and Schlag~\cite{GS} in one dimension.  Except for the last of these, the
low-energy argument builds upon the series expansion for resolvents set forth
in~\cite{JenKat,JN}.  Some additional results are known if zero is an
eigenvalue only, see \cite{Yaj,golde,EG2}.

In addition there has been work on the $L^p$ boundedness
of the wave operators, which are defined by strong limits
on $L^2(\R^4)$
$$
	W_{\pm}=s\mbox{-}\lim_{t\to \pm\infty}
	e^{itH}e^{it\Delta}.
$$
The $L^p$ boundedness of the wave operators is particularly
relevant to our line of inquiry because of the so-called intertwining
property
$$
	f(H)P_{ac}=W_{\pm}f(-\Delta)W_{\pm}^*
$$
which is valid for Borel functions $f$.  
In particular
we note the results of Jensen and Yajima in \cite{JY4},
in which the case of an eigenvalue but no resonance
in dimension four
was considered.  In this case they showed that the
wave operators are bounded on $L^p(\R^4)$ for
$\f43<p<4$.  Roughly speaking, 
this corresponds to time decay of size
$|t|^{-1+}$ for large $t$.

As usual (cf. \cite{RodSch,GS,Sc2}), the dispersive estimates follow from treating $e^{itH}\chi(H)P_{ac}(H)$
as an element of the functional calculus of $H$.  These operators are expressed using the Stone formula
\begin{align}\label{Stone}
	e^{itH}\chi(H)P_{ac}(H)f(x)=\frac{1}{2\pi i} \int_0^\infty e^{it\lambda^2} \lambda \chi(\lambda)[R_V^+(\lambda^2)-R_V^-(\lambda^2)]f(x)\, 
	d\lambda
\end{align}
with the difference of resolvents $R_V^{\pm}(\lambda)$ providing the absolutely continuous spectral measure.
For $\lambda > 0$ (and if also at $\lambda = 0$ if zero is a regular point of the spectrum) the resolvents are well defined on certain
weighted $L^2$ spaces, see \cite{agmon}.  The key issue when zero energy is not regular is to control
the singularities in the spectral measure  as $\lambda\to 0$.
Accordingly, we study expansions for the resolvent operators $R_V^{\pm}(\lambda^2)$ in a neighborhood
of zero.  The type of terms present is heavily influenced by whether $n$ is even or odd.
In odd dimensions the expansion is a formal Laurent series $R_V^\pm(\lambda^2) = A\lambda^{-2} + B\lambda^{-1} + C
+ \ldots$ with operator-valued coefficients.
In even
dimensions the expansion is more complicated, involving terms of the
form $\lambda^k (\log \lambda)^\ell$, $k \geq -2$.
For this reason our analysis is most similar to the two-dimensional
work in~\cite{EG2}.

In addition to our analysis of the Schr\"odinger evolution, $e^{itH}P_{ac}(H)$, our techniques also allow us
to study the low energy evolution of solutions to the four-dimensional wave   equation with potential.
\begin{align}\label{wave}
	u_{tt}+(-\Delta+V)u=0, \qquad u(x,0)=f(x), \quad
	u_t(x,0)=g(x).
\end{align}
We can formally write the solution to \eqref{wave}
as
\begin{align}\label{wave soln}
	u(x,t)=\cos(t\sqrt{H})f(x)+
	\frac{\sin(t\sqrt{H})}{\sqrt{H}}g(x).
\end{align}
This representation makes sense if, for example,
$(f,g)\in L^2\times \dot{H}^{-1}$.
In the free case, when $V=0$, the solution operators
are known to satisfy a dispersive bound which decays
like $|t|^{-\f32}$ for large $t$ if $f,g$ possess a sufficient degree
of regularity.

The spectral issues for $H$ are the same as in the
case of the Schr\"odinger evolution, in particular we have the
representation
\begin{align}
	\cos(t\sqrt{H})P_{ac}f(x)
	&=\frac{1}{\pi i}\int_0^\infty \cos(t\lambda) \lambda[R_V^{+}(\lambda^2)-R_V^-(\lambda^2)]f(x)\, 
	d\lambda,\label{cos evol}\\
	\frac{\sin(t\sqrt{H})}{\sqrt{H}}P_{ac}g(x)&=\frac{1}{\pi i}\int_0^\infty \sin(t\lambda)
	 [R_V^{+}(\lambda^2)-R_V^-(\lambda^2)]g(x)\, 
	d\lambda\label{sin evol}
\end{align}
The key observation here is that the spectral measure is
the same, but instead of the functional calculus yielding
multiplication by $e^{it\lambda^2}\lambda$ we have
multiplication by $\cos(t\lambda)\lambda$ and
$\sin(t\lambda)$.

Dispersive estimates for the wave equation, with a loss of derivatives, are not as well studied as \eqref{eq:dispersive}.
The bulk of the results are in three dimensions, we note for example \cite{BS,Beals,Cucc,GVw,DP,BeGol}.
Some advances have been made in other dimensions, \cite{Kop} in dimension two in the weighted $L^2$ sense,
and \cite{CV} for dimensions $4\leq n\leq 7$. 
These results all require the assumption that zero is regular.  Less is known if zero energy is not
a regular point of the spectrum, we note \cite{KS,Gr2,DK} in dimensions three, two and one
respectively.
Here we establish a low energy
$L^1\to L^\infty$ dispersive bound for solutions to the
wave equation with potential in four spatial dimensions.  We note that the loss of derivatives on the initial data in
the dispersive estimate for the wave equation is a high energy phenomenon.
\begin{theorem}\label{thm:wave}

	Suppose $|V(x)|\les \la x\ra^{-\beta}$.  Then
	there exist finite rank operators $F_t$ and
	$G_t$ with the norm bounds $\|F_t\|_{L^1\to L^\infty}\les 1/ \log t $, and $\|G_t\|_{L^1\to L^\infty}\les t/ \log t$
	so that
	\begin{align*}
		&\|\cos(t\sqrt{H})\chi(H)P_{ac}(H)- F_t\|_{L^1
		\to L^\infty}\les t^{-1}, \qquad t>2.\\
		&\Big\| \frac{\sin(t\sqrt{H})}{\sqrt{H}}\chi(H)P_{ac}(H)-G_t 
		\Big\|_{L^1\to L^\infty} \les t^{-1}, \qquad t>2,	
	\end{align*}
	Where:
	\begin{enumerate}[i)]
		\item if there is a resonance at zero but no eigenvalue,
		then  $F_t$ and $G_t$ are rank one operators 
		provided $\beta>4$.
		\item if there is an eigenvalue at zero but no resonance,
		then $F_t=G_t=0$, provided $\beta>8$.
		\item if there is an eigenvalue and a resonance 
		at zero, then $F_t$ and $G_t$ are of rank at most two 
		provided $\beta>8$.		
	\end{enumerate}

\end{theorem}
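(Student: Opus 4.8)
The plan is to run the argument in close parallel with the proof of Theorem~\ref{thm:main}, exploiting the observation already recorded after \eqref{cos evol}--\eqref{sin evol}: the absolutely continuous spectral measure $\frac{1}{\pi i}[R_V^+(\lambda^2)-R_V^-(\lambda^2)]$ is \emph{exactly the same} object that drives the Schr\"odinger evolution in \eqref{Stone}. Consequently the low-energy expansions of $R_V^\pm(\lambda^2)$ about $\lambda=0$---whose derivation constitutes the bulk of the work underlying Theorems~\ref{thm:res1}--\ref{thm:res3}---carry over with no change, and the only new analytic task is to re-examine the oscillatory $\lambda$-integrals with the Schr\"odinger multiplier $e^{it\lambda^2}\lambda$ replaced by $\cos(t\lambda)\lambda$ for the cosine propagator and by $\sin(t\lambda)$ for the sine propagator. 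First I would write the spectral measure, under the cutoff $\chi(\lambda)$, as a finite sum of leading singular terms (finite rank, encoding the resonance/eigenvalue data, and logarithmic in $\lambda$ as is typical in four dimensions) plus a regular remainder, using the very decomposition and the same dichotomy (i)--(iii) already set up for Theorem~\ref{thm:main}.

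Next, $F_t$ and $G_t$ are \emph{defined} to be the contributions of these leading singular terms, that is, their oscillatory integrals against $\cos(t\lambda)\lambda\chi(\lambda)$ and $\sin(t\lambda)\chi(\lambda)$ respectively. Because the singular terms are finite rank---at most two, and rank one when only a resonance is present---so are $F_t$ and $G_t$, matching the stated ranks. The norm bounds then follow by estimating these explicit finite-rank integrals directly, and the crucial structural difference between the two propagators is the power of $\lambda$ in the multiplier. The cosine integrand carries an extra factor $\lambda$ that vanishes at the origin; this suppresses the low-energy singularity and produces $\lnorm F_t\rnorm_{L^1\to L^\infty}\les 1/\log t$. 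The sine integrand lacks this factor, and since $\sin(t\lambda)\approx t\lambda$ for $\lambda$ near zero, the same singular term now generates an additional factor of $t$, yielding the larger bound $\lnorm G_t\rnorm_{L^1\to L^\infty}\les t/\log t$. In the purely-eigenvalue case (ii) the most singular pieces cancel precisely as in Theorem~\ref{thm:main}, forcing $F_t=G_t=0$.

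It then remains to show that the remainder integrals decay like $t^{-1}$. Here the phase is linear in $\lambda$, namely $t\lambda$, rather than the quadratic Schr\"odinger phase $t\lambda^2$, so the relevant oscillatory estimates are if anything cleaner: repeated integration by parts in $\lambda$ against $\cos(t\lambda)$ or $\sin(t\lambda)$, together with van der Corput--type bounds, converts each gained power of $t^{-1}$ into a $\lambda$-derivative landing on the regular part of the expansion or on $\chi$. The decay hypotheses $\beta>4$ (respectively $\beta>8$) are exactly what guarantee enough differentiability and integrability of these remainder kernels for the scheme to close, just as they do for the corresponding cases of Theorem~\ref{thm:main}.

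The step I expect to be the main obstacle is the sine propagator. Because it is missing one power of $\lambda$ relative to the cosine (and to the Schr\"odinger) multiplier, the low-energy singularity is amplified, and one must expand the spectral measure to a higher order and track the four-dimensional logarithmic factors of the form $(\log\lambda)^{-1}$ through each oscillatory integral, delicately balancing the $t/\log t$ growth of the $G_t$ term against the $t^{-1}$ decay demanded of the remainder. This bookkeeping---rather than any genuinely new analytic ingredient beyond the shared resolvent expansions of Theorems~\ref{thm:res1}--\ref{thm:res3}---is where the subtlety of the argument concentrates.
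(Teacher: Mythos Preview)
Your proposal is correct and mirrors the paper's own argument in Section~\ref{sec:wave}: reuse the resolvent expansions underlying Theorems~\ref{thm:res1}--\ref{thm:res3}, replace $e^{it\lambda^2}\lambda$ by $\cos(t\lambda)\lambda$ or $\sin(t\lambda)$, and handle the singular and remainder pieces by the direct analogues of Lemma~\ref{log decay} and a single integration by parts respectively. The obstacle you anticipate for the sine propagator is milder than you expect---no higher-order expansion or delicate balancing is needed, since the same remainder bound $\mathcal E'\in L^1$ already yields $t^{-1}$ decay after one integration by parts, and only the isolated leading singular term picks up the extra factor of $t$.
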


The difference in the time behavior of $F_t$ and $G_t$ is because of the fact that 
$$
\frac{\sin(t\lambda)}{\lambda} \sim t \,\,\,\,\,\text{ whereas } \cos(t\lambda)\sim 1 
$$  
for small $\lambda$.
Our analysis follows the analysis done in \cite{Gr2} in two dimensions when zero is not regular, which was
inspired by observations in \cite{Kop} and \cite{Mur},
which studied the evolution in the setting of
weighted $L^2$ spaces.  This result, along with a high energy bound in
\cite{CV} can be used to develop an estimate without the cut-off $\chi(H)$.

The contents of the paper are organized as follows.  In Section~\ref{sec:exp} we develop expansions for the
free resolvent and related operators needed
to understand the behavior of $R_V^{\pm}(\lambda^2)$ for small $\lambda$.  We then consider the effect
of the various spectral conditions at zero on the evolution of the Schr\"odinger operator, \eqref{Stone}, in Sections~\ref{sec:first}, \ref{sec:second}
and \ref{sec:third}.  In Section~\ref{sec:wave} we show how the analysis of the previous sections can be used
to understand the evolution of the wave equation.  Finally in Section~\ref{sec:spec} we
characterize the spectral subspaces of $L^2(\R^4)$ related to the various obstructions at zero energy.

\section{Resolvent expansions around zero}\label{sec:exp}

We   use the notation
$$
f(\lambda)=\widetilde O(g(\lambda))
$$
to denote
$$
\frac{d^j}{d\lambda^j} f = O\big(\frac{d^j}{d\lambda^j} g\big),\,\,\,\,\,j=0,1,2,3,...
$$
Unless otherwise specified, the notation refers only to derivatives with respect to the spectral variable $\lambda$.
If the derivative bounds hold only for the first $k$ derivatives we  write $f=\widetilde O_k (g)$.  In this
paper we use that notation for operators as well as
scalar functions; the meaning should be clear from
context.

Most properties of the low-energy expansion for $R_V^\pm(\lambda^2)$ are
inherited in some way from the free resolvent
$R_0^\pm(\lambda^2) = (-\Delta-(\lambda^2\pm i0))^{-1}$.  In this section we gather facts
about $R_0^\pm(\lambda^2)$ and examine the algebraic relation between $R_V^\pm(\lambda^2)$
and $R_0^\pm(\lambda^2)$.

Recall that the free resolvent in four dimensions has the integral kernel
\begin{align}\label{resolv def}
	R_0^\pm(\lambda^2)(x,y)=\pm\frac{i}{4}\frac{\lambda}{2\pi |x-y|} H_1^\pm(\lambda|x-y|)
\end{align}
where $H_1^\pm$ are the Hankel functions of order one:
\begin{align}\label{H0}
H_1^{\pm}(z)=J_1(z)\pm iY_1(z).
\end{align}
From the series expansions for the Bessel functions, see \cite{AS}, as $z\to 0$ we have
\begin{align}
	J_1(z)&=\frac{1}{2}z-\frac{1}{16}z^3
	+\widetilde O_1(z^5),\label{J0 def}\\
	Y_1(z)&=-\frac{2}{\pi z}+\frac{2}{\pi}\log (z/2)J_1(z)+b_1z+b_2 z^3 +\widetilde O_1(z^5)
	\label{Y0 def}\\
	&=-\frac{2}{\pi z}+\frac{1}{\pi}z\log (z/2)
	+b_1z-\frac{1}{8\pi}z^3\log (z/2)+b_2 z^3
	+\widetilde O_1(z^5\log z).\label{Y0 def2}
\end{align}
Here $b_1,b_2\in \R$.
Further, for $|z|>1 $, we have the representation (see, {\em e.g.}, \cite{AS})
\begin{align}\label{JYasymp2}
	&H_1^\pm(z)= e^{\pm iz} \omega_\pm(z),\,\,\,\, |\omega_{\pm}^{(\ell)}(z)|\lesssim (1+|z|)^{-\frac{1}{2}-\ell},\,\,\,\ell=0,1,2,\ldots.
\end{align}
This implies that  
(with $r=|x-y|$)
\begin{align}\label{R0repr}
	R_0^{\pm}(\lambda^2)(x,y)=r^{-2}\rho_-(\lambda r)+
	r^{-1}\lambda e^{\pm i\lambda r}\rho_+(\lambda r).
\end{align}
Here $\rho_-$ is supported on $[0,\frac{1}{2}]$, 
$\rho_+$ is supported on $[\frac{1}{4},\infty)$ 
satisfying the estimates 
$|\rho_-(z)|\les 1$  and
$\rho_+(z)=\widetilde O(z^{-\frac{1}{2}})$.

To obtain expansions for $R_V^\pm(\lambda^2)$ around zero energy  we utilize the symmetric resolvent identity.
 Let $U(x)=1$ if $V(x)\geq 0$ and $U(x)=-1$ if $V(x)<0$, and let $v=|V|^{1/2}$, so that $V=Uv^2$. 
Then the formula 
\be\label{res_exp}
R_V^\pm(\lambda^2)=  R_0^\pm(\lambda^2)-R_0^\pm(\lambda^2)vM^\pm(\lambda)^{-1}vR_0^\pm(\lambda^2),
\ee
is valid for $\Im (  \lambda) > 0$, where $M^\pm(\lambda)=U+vR_0^\pm(\lambda^2)v$.  

Note that the statements of Theorem~\ref{thm:main} control operators from $L^1(\R^4)$ to $L^\infty(\R^4)$,
while our analysis of $M^\pm(\lambda^2)$ and its inverse will be conducted in $L^2(\R^4)$.
Since the leading term of the free resolvent in $\R^4$
has size $|x-y|^{-2}$ for $|x-y|<1$, the free resolvents do not map $L^1 \to L^2$
or $L^2 \to L^\infty$.  
However, we show below that iterated resolvents provide a bounded map between these spaces.
Therefore to use the 
symmetric resolvent identity, we need two resolvents on
either side of $M^{\pm}(\lambda)^{-1}$.
Accordingly, from the standard
resolvent identity we have:
\begin{align}
	R_V^\pm(\lambda^2)=&R_0^\pm(\lambda^2)\label{resolvid1}
	-R_0^\pm(\lambda^2)VR_0^\pm(\lambda^2) 
	+R_0^\pm(\lambda^2)V R_V^\pm(\lambda^2)
	VR_0^\pm(\lambda^2).
\end{align}
Combining this with \eqref{res_exp}, we have 
\begin{align}
	R_V^{\pm}(\lambda^2)=&R_0^{\pm}(\lambda^2)\label{bs finite}
	-R_0^{\pm}(\lambda^2)VR_0^{\pm}(\lambda^2)
	+R_0^{\pm}(\lambda^2)VR_0^\pm(\lambda^2)VR_0^{\pm}(\lambda^2)\\
	&-R_0^{\pm}(\lambda^2)VR_0^\pm(\lambda^2)vM^\pm(\lambda)^{-1}vR_0^\pm(\lambda^2)VR_0^{\pm}(\lambda^2).\label{bs tail}
\end{align}
Provided $V(x)$ decays sufficiently, we will show that $[R_0^{\pm}(\lambda^2)VR_0^\pm(\lambda^2)v](x,\cdot)\in L^2(\R^4)$ uniformly in $x$, and that $M^\pm(\lambda)$ is invertible in $L^2(\R^4)$.

\begin{lemma}\label{lem:locL2}

	If $|V(x)|\les \la x\ra^{-\beta-}$ for
	some $\beta>2$, then for any 
	$\sigma>\max(\frac{1}{2},3-\beta)$ we have
	$$
		\sup_{x\in\R^4}
		\| [ R_0^{\pm}(\lambda^2)V R_0^{\pm}(\lambda^2)]
		(x,y)\|_{L^{2,-\sigma}_y}\les \la \lambda
		\ra.
	$$
	Consequently $\lnorm R_0^\pm(\lambda^2)VR_0^\pm(\lambda^2)v\rnorm_{L^2\to L^\infty} \les \la \lambda \ra$.
\end{lemma}

Before we prove the lemma we note the following bounds,
whose proofs we omit.  First, Lemma~6.2 of \cite{EG1}:
\begin{lemma}\label{EG:Lem}

	Fix $u_1,u_2\in\R^n$ and let $0\leq k,\ell<n$, 
	$\beta>0$, $k+\ell+\beta\geq n$, $k+\ell\neq n$.
	We have
	$$
		\int_{\R^n} \frac{\la z\ra^{-\beta-}}
		{|z-u_1|^k|z-u_2|^\ell}\, dz
		\les \left\{\begin{array}{ll}
		(\frac{1}{|u_1-u_2|})^{\max(0,k+\ell-n)}
		& |u_1-u_2|\leq 1\\
		\big(\frac{1}{|u_1-u_2|}\big)^{\min(k,\ell,
		k+\ell+\beta-n)} & |u_1-u_2|>1
		\end{array}
		\right.
	$$

\end{lemma}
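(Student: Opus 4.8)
The plan is to prove this as a purely real-variable estimate on the three-center integral, treating $R:=|u_1-u_2|$ as the only relevant length scale; the asserted bound depends on $u_1,u_2$ only through $R$, so I would establish it uniformly over all placements of the pair with $|u_1-u_2|=R$. The weight $\la z\ra^{-\beta-}$ should be read as $\la z\ra^{-\beta-\eps}$ for a fixed small $\eps>0$, and the three hypotheses enter in distinct roles: $k,\ell<n$ guarantees that each algebraic singularity $|z-u_i|^{-k}$ is locally integrable, $k+\ell+\beta\ge n$ (hence $k+\ell+\beta+\eps>n$) provides convergence of the tail at spatial infinity, and $k+\ell\ne n$ excludes a borderline logarithmic divergence that would otherwise appear in the coincident-singularity regime. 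The conceptual workhorse is the elementary two-center bound
\begin{equation}\label{twocenter}
\int_{\R^n}\la z-a\ra^{-p}\la z-b\ra^{-q}\,dz\les \la a-b\ra^{-\min(p,q,\,p+q-n)},\qquad p,q\ge 0,\ p+q>n,\ p,q\neq n,
\end{equation}
which I would record as a sub-lemma and prove by splitting $\R^n$ into $\{|z-a|\le|a-b|/2\}$, $\{|z-b|\le|a-b|/2\}$, and the far region, the three terms in the minimum being produced respectively by the singularity at $a$, the singularity at $b$, and the decay at infinity.

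For the near-diagonal case $R\le 1$ I would split at scale $R$. On $\{|z-u_1|\le 2R\}$ (so $|z-u_2|\le 3R$ as well, and both singularities are active) I use $\la z\ra^{-\beta-}\le 1$ and rescale $z=u_1+R\zeta$, collapsing the piece to $R^{n-k-\ell}\int_{|\zeta|\le 2}|\zeta|^{-k}|\zeta-e|^{-\ell}\,d\zeta$ with $e$ a unit vector; the remaining integral is a finite constant independent of $e$ because $k,\ell<n$, which yields $R^{n-k-\ell}=R^{-\max(0,k+\ell-n)}$ when $k+\ell>n$ and a harmless power $R^{n-k-\ell}\le 1$ when $k+\ell<n$. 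On the complement $\{|z-u_1|>2R\}$ one has $|z-u_2|\approx|z-u_1|$, so the integrand is controlled by $\la z\ra^{-\beta-}|z-u_1|^{-(k+\ell)}$; extending to $\R^n$ and applying \eqref{twocenter} with $p=k+\ell$, $q=\beta+\eps$ (legitimate since $k+\ell+\beta+\eps>n$) bounds this piece by a constant, absorbed since $R^{-\max(0,k+\ell-n)}\ge 1$ for $R\le 1$.

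For the separated case $R>1$ I would decompose $\R^n$ into the regions near $u_1$, near $u_2$, and far from both, at scale $R$. In the far region $\{|z-u_1|,|z-u_2|\ge R/2\}$ a direct estimate, splitting according to whether $|z|\le R$ or $|z|>R$, controls the integral by $R^{-(k+\ell+\beta-n)}$. In the region $\{|z-u_1|<R/2\}$ the factor $|z-u_2|^{-\ell}\approx R^{-\ell}$ pulls out, leaving $R^{-\ell}\int_{|z-u_1|<R/2}\la z\ra^{-\beta-}|z-u_1|^{-k}\,dz$, and the residual ball integral I would estimate by a dyadic decomposition in $|z-u_1|$, tracking whether each shell meets the origin; the outcome is that this region contributes at most $R^{-\min(\ell,\,k+\ell+\beta-n)}$. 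By symmetry the region $\{|z-u_2|<R/2\}$ contributes $R^{-\min(k,\,k+\ell+\beta-n)}$, and summing the three regions gives the asserted $R^{-\min(k,\ell,\,k+\ell+\beta-n)}$.

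The main obstacle is the bookkeeping in the $R>1$ case: one must verify that all three candidate exponents genuinely arise and that the origin-centered weight interacts correctly with the two singular centers as the pair $(u_1,u_2)$ is translated arbitrarily far from the origin. The delicate point is precisely the near-$u_1$ ball integral, where the answer depends on the interplay of $|u_1|$ with $R$ and on whether $k+\beta$ exceeds $n$: when $\la z\ra^{-\beta-}\approx\la u_1\ra^{-\beta-}\le R^{-\beta-}$ over the whole ball the local integral is $\approx R^{n-k}$, producing $R^{-\ell}\cdot R^{n-k-\beta-\eps}=R^{-(k+\ell+\beta-n)-\eps}$, whereas in the complementary regime it is bounded and yields the $R^{-\ell}$ term. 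Because the weight is not translation invariant while the final bound must be uniform in the absolute positions of the centers, it is exactly this crossover — made rigorous by the dyadic decomposition of $\{|z-u_1|<R/2\}$ with careful attention to the shells containing the origin — that forces the exponent $k+\ell+\beta-n$ to appear alongside $k$ and $\ell$.
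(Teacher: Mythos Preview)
The paper does not actually prove this statement: it is quoted verbatim as Lemma~6.2 of \cite{EG1} with the explicit remark that the proof is omitted. So there is no in-paper argument to compare against; your sketch is a self-contained substitute, and the two-center sub-lemma you record as \eqref{twocenter} is essentially the paper's Lemma~\ref{bracket decay} (Lemma~5.5 of \cite{Gr}), which is also quoted without proof.

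Your outline is the standard and correct one. One small repair is needed in the near-diagonal case $R\le 1$. On the complement $\{|z-u_1|>2R\}$ you propose to extend the integral of $\la z\ra^{-\beta-\eps}|z-u_1|^{-(k+\ell)}$ to all of $\R^n$ and invoke \eqref{twocenter}; but \eqref{twocenter} is stated for bracket weights, and when $k+\ell>n$ the power $|z-u_1|^{-(k+\ell)}$ is not locally integrable at $u_1$, so the extension is illegitimate as written. The fix is to split once more: on $\{2R<|z-u_1|<1\}$ use $\la z\ra^{-\beta-\eps}\le 1$ and compute $\int_{2R}^1 r^{n-1-(k+\ell)}\,dr\les R^{-\max(0,k+\ell-n)}$ directly (here $k+\ell\ne n$ is used), while on $\{|z-u_1|\ge 1\}$ the power and bracket are comparable and \eqref{twocenter} does give $O(1)$. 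The complement piece is therefore $\les R^{-\max(0,k+\ell-n)}$ rather than $O(1)$, but this is the same size as the near piece and the stated bound for $R\le 1$ follows. The $R>1$ analysis is fine; your closing paragraph correctly identifies the crossover that produces the third exponent $k+\ell+\beta-n$.
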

We also note Lemma~5.5 of \cite{Gr}
\begin{lemma}\label{bracket decay}
 
  Let $0<\mu,\gamma$ be such that and $n<\gamma+\mu$.  Then
  \begin{align*}
    \int_{\R^n} \langle y \rangle^{-\gamma} \langle x-y\rangle^{-\mu} \, dy
    \lesssim \langle x \rangle^{-\min(\gamma, \mu, \gamma+\mu-n)}.
  \end{align*}

\end{lemma}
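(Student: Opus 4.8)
The plan is to treat this as a self-contained convolution estimate, proved by a three-region decomposition of the $y$-integral keyed to the sizes of $|y|$ and $|x-y|$ relative to $|x|$. First I would dispose of the bounded range $|x|\lesssim 1$, where $\langle x\rangle\sim 1$ and the claim reduces to the finiteness of $\int_{\R^n}\langle y\rangle^{-\gamma}\langle x-y\rangle^{-\mu}\,dy$ uniformly in $x$. Since both bracketed factors are everywhere $\geq 1$ (there are no local singularities) and the integrand decays like $|y|^{-(\gamma+\mu)}$ as $|y|\to\infty$, the hypothesis $n<\gamma+\mu$ furnishes uniform convergence (a fixed integrable majorant is available for $|x|\le 1$). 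It then remains to handle $|x|\gg 1$, for which $\langle x\rangle\sim|x|$.

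For $|x|\gg 1$ I would split $\R^n$ into $\Omega_1=\{|y|\le|x|/2\}$, $\Omega_2=\{|x-y|\le|x|/2\}$, and $\Omega_3=\{|y|>|x|/2,\ |x-y|>|x|/2\}$. On $\Omega_1$ the triangle inequality gives $\langle x-y\rangle\sim\langle x\rangle$, so that factor comes out as $\langle x\rangle^{-\mu}$ and one is left to estimate $\int_{|y|\le|x|/2}\langle y\rangle^{-\gamma}\,dy$, which is $O(1)$ when $\gamma>n$ and $O(\langle x\rangle^{n-\gamma})$ when $\gamma<n$. In either case the $\Omega_1$ contribution is $\les\langle x\rangle^{-\mu}+\langle x\rangle^{-(\gamma+\mu-n)}$, both dominated by $\langle x\rangle^{-\min(\gamma,\mu,\gamma+\mu-n)}$. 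Region $\Omega_2$ is handled identically after interchanging the roles of $\gamma$ and $\mu$, equivalently by the substitution $y\mapsto x-y$, yielding $\les\langle x\rangle^{-\gamma}+\langle x\rangle^{-(\gamma+\mu-n)}$.

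The remaining region $\Omega_3$, on which both $\langle y\rangle$ and $\langle x-y\rangle$ are $\gtrsim\langle x\rangle$, I would subdivide once more. On the far part $\{|y|>2|x|\}$ one has $|x-y|\sim|y|$, so the integrand is $\sim\langle y\rangle^{-(\gamma+\mu)}$ and integration over $|y|>2|x|$ gives $\les\langle x\rangle^{n-\gamma-\mu}$, exactly the $\langle x\rangle^{-(\gamma+\mu-n)}$ rate. On the annular part $\{|x|/2<|y|\le 2|x|\}\cap\Omega_3$ one has $\langle y\rangle\sim\langle x\rangle$, so $\langle y\rangle^{-\gamma}$ comes out as $\langle x\rangle^{-\gamma}$ and the surviving integral $\int\langle x-y\rangle^{-\mu}\,dy$ over the shell $|x|/2<|x-y|\les|x|$ is $O(1)$ when $\mu>n$ and $O(\langle x\rangle^{n-\mu})$ when $\mu<n$, giving $\les\langle x\rangle^{-\min(\gamma,\gamma+\mu-n)}$. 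Collecting the three regions produces the asserted bound.

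The only delicate bookkeeping occurs at the borderline exponents $\gamma=n$ or $\mu=n$: there the radial integrals $\int r^{n-1-\gamma}\,dr$ acquire a factor $\log\langle x\rangle$, so the clean power bound degrades by a logarithm and the estimate holds in the form $\langle x\rangle^{-\min(\gamma,\mu,\gamma+\mu-n)}\log\langle x\rangle$, or equivalently with the exponent decreased by an arbitrarily small amount. This is the main point requiring care, but it is harmless for the applications in this paper, all of which invoke the lemma with exponents bounded away from $n$; for such generic $\gamma,\mu\neq n$ the argument above gives the stated power bound directly. Since no cancellation enters anywhere, the entire estimate is a matter of organizing these elementary radial integrals.
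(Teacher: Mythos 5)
Your argument is correct, and it is worth noting that the paper itself contains no proof to compare it against: the lemma is quoted verbatim from Lemma~5.5 of \cite{Gr} with the proof explicitly omitted (``whose proofs we omit''). Your three-region decomposition --- dispose of $|x|\lesssim 1$ by uniform integrability from $\gamma+\mu>n$, then split $\{|y|\le |x|/2\}$, $\{|x-y|\le|x|/2\}$, and the complement --- is the standard route and every step checks out: on $\Omega_1$ indeed $\langle x-y\rangle\sim\langle x\rangle$, on the far part of $\Omega_3$ indeed $|x-y|\sim|y|$ so that the strict inequality $\gamma+\mu>n$ yields exactly the $\langle x\rangle^{-(\gamma+\mu-n)}$ rate, and $\Omega_2$ follows from $\Omega_1$ by the substitution $y\mapsto x-y$. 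One small simplification: on the annular part of $\Omega_3$ no case distinction in $\mu$ is needed, since there $|x|/2<|x-y|\le 3|x|$ forces $\langle x-y\rangle\sim\langle x\rangle$, so that piece is $\lesssim \langle x\rangle^{-\gamma}\cdot\langle x\rangle^{-\mu}|x|^n=\langle x\rangle^{-(\gamma+\mu-n)}$ outright, with no logarithm even at $\mu=n$; the only logarithms in the whole argument come from $\Omega_1$ when $\gamma=n$ and $\Omega_2$ when $\mu=n$.

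Your borderline caveat is not a mere scruple: as literally stated, with only $\gamma,\mu>0$ and $\gamma+\mu>n$, the lemma is false when $\gamma=n$ (or symmetrically $\mu=n$). For instance with $n=1$, $\gamma=1$, $\mu=\frac{1}{2}$, the region $0\le y\le x/2$ alone contributes
\begin{align*}
\langle x\rangle^{-1/2}\int_0^{x/2}\langle y\rangle^{-1}\,dy \sim \langle x\rangle^{-1/2}\log\langle x\rangle,
\end{align*}
which exceeds the claimed bound $\langle x\rangle^{-\min(\gamma,\mu,\gamma+\mu-n)}=\langle x\rangle^{-1/2}$. The garbled hypothesis in the statement (``be such that and $n<\gamma+\mu$'') evidently dropped a nondegeneracy condition such as $\gamma\neq n\neq\mu$, under which your argument gives the clean power bound; your log-corrected version is the correct general statement. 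And as you observe, every invocation in this paper (in Lemmas~\ref{lem:locL2} and \ref{d0bounded}) carries $0\pm$ fudge room in the exponents, so nothing downstream is affected.
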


\begin{proof}[Proof of Lemma~\ref{lem:locL2}]

	Using \eqref{R0repr} we have 
\begin{align}
	|R_0^{\pm}(\lambda^2)(x,y)|&\les \frac{1}{|x-y|^2}+
	\frac{\lambda^{\frac{1}{2}}}{|x-y|^{\frac{3}{2}}}.
\end{align}
	Thus
	\begin{align*}
		|R_0^{\pm}(\lambda^2)(x,z)V(z) R_0^{\pm}(\lambda^2)(z,y)|
		\les \la \lambda \ra  |V(z)|\bigg(
		\frac{1}{|x-z|^{\frac{3}{2}}}+\frac{1}{|x-z|^2}
		\bigg)\bigg( \frac{1}{|z-y|^{\frac{3}{2}}}+\frac{1}{|z-y|^2}
		\bigg)
	\end{align*}
	We need only concern ourselves with the most singular and slowest
	decaying terms to establish local $L^2$ behavior and determine
	the appropriate weight needed.
	We use that for $a,b>0$ 
	$$
		\frac{1}{a^2 b^2}
		\les \frac{1}{a^{2-} b^2}
		+\frac{1}{a^{2+} b^2}
	$$
	to avoid logarithmic
	singularities.  
	So that, using Lemma~\ref{EG:Lem}
	\begin{align*}
		\int_{\R^4}&\la z\ra^{-\beta-}\bigg(\frac{1}
		{|x-z|^{\frac{3}{2}}|z-y|^{\frac{3}{2}}}
		+\frac{1}{|x-z|^{2-} |z-y|^2}
		+\frac{1}{|x-z|^{2+} |z-y|^2}
		\bigg)\, dz\\
		&\les \la x-y \ra^{-\min(\frac{3}{2},\beta-1)}
		+\la x-y \ra^{-\min(2-,\beta-)}
		+|x-y|^{0-}\la x-y \ra^{-\min(2,\beta+)+}\\
		&\les |x-y|^{0-}\la x-y\ra^{-\min(\frac{3}{2},\beta-1)}
	\end{align*}
	Using Lemma~\ref{bracket decay} this is clearly in $L^{2,-\sigma}_y$ uniformly in $x$ 
	  provided  $ \sigma>\max(\frac{1}{2},3-\beta)$.
	Multiplication by $v(y) \les \la y \ra^{-\beta/2}$ suffices to remove the weights because $\frac{\beta}{2} > \max(\f12, 3-\beta)$ for $\beta >2$.

\end{proof}

To invert  $M^{\pm}(\lambda)$ in $L^2$ under various spectral assumptions on the zero energy we need to obtain several different expansions for $M^{\pm}(\lambda)$. The  following operators arise naturally in these expansions 
(see \eqref{J0 def}, \eqref{Y0 def}):
\begin{align}
	G_0f(x)&=-\frac{1}{4\pi^2}\int_{\R^4} 
	\frac{f(y)}{|x-y|^2}\,dy=(-\Delta)^{-1} f(x) , \label{G0 def}\\
	G_1f(x)&=-\frac{1}{8\pi^2}\int_{\R^4} \log(|x-y|) f(y)\, dy,\label{G1 def}\\ 
	\label{G2 def} G_2f(x)&=c_2 \int_{\R^4} |x-y|^2 f(y)
	\, dy\\
	\label{G3 def} G_3f(x)&=c_3 \int_{\R^4}|x-y|^2
	\log(|x-y|)f(y)\, dy
\end{align}
Here $c_2,c_3$ are certain real-valued constants, the
exact values are unimportant for our analysis.  We will use $G_j(x,y)$ to denote the integral kernel of the operator $G_j$.  In 
addition, the following functions appear  naturally,
\begin{align}
\label{g1 def}
g^+_1(\lambda)=\overline{g_1^-(\lambda)}&=\lambda^2(a_1\log(\lambda)+z_1)\\
\label{g2 def}
	g^+_2(\lambda)=\overline{g_2^-(\lambda)}&=\lambda^4(a_2\log(\lambda)+z_2).
\end{align}
Here  $a_j\in \R\setminus\{0\}$ and $z_j\in \mathbb C
\setminus \R$.

 We also define the operators
 \begin{align}
\label{TandP} T:= M^\pm(0) = U+vG_0v,\,\,\,\,\,\,\,\,\,P:= \|V\|_1^{-1} v\la v, \cdot\ra.
\end{align} 
Finally we recall the definition of the Hilbert-Schmidt
norm of an operator $K$ with kernel $K(x,y)$,
$$
	\| K\|_{HS}:=\bigg(\iint_{\R^{2n}}
	|K(x,y)|^2\, dx\, dy
	\bigg)^{\f12}
$$

\begin{lemma}\label{lem:M_exp}

	Assuming that $v(x)\les
	\la x\ra^{-\beta}$. If $\beta>2$, then we have 
	\begin{multline}\label{Mexp0}
	 M^{\pm}(\lambda)=T+M_0^\pm(\lambda), \\
	 \|\sup_{0<\lambda<\lambda_1} \lambda^{ -2+}  M_0^{\pm}(\lambda)\|_{HS}+  \|\sup_{0<\lambda<\lambda_1} \lambda^{ -1+}  \partial_\lambda M_0^{\pm}(\lambda)\|_{HS}\les 1,
	\end{multline}
	and
	\begin{multline}\label{Mexp1}
		M^{\pm}(\lambda) =T+\|V\|_1 g_1^{\pm}(\lambda) P
		+\lambda^2 vG_1v+M_1^{\pm}(\lambda),\\
		 \|\sup_{0<\lambda<\lambda_1} \lambda^{ -2-}  M_1^{\pm}(\lambda)\|_{HS}+   \|\sup_{0<\lambda<\lambda_1} \lambda^{ -1-}  \partial_\lambda M_1^{\pm}(\lambda)\|_{HS}\les 1. 
	\end{multline}
	If $\beta> 4 $, we have
	\begin{multline}\label{Mexp2}
		M^{\pm}(\lambda) =T+\|V\|_1 g_1^{\pm}(\lambda) P
		+\lambda^2 vG_1v+g_2^\pm(\lambda) vG_2v
		+ \lambda^4 vG_3v
		+M_2^{\pm}(\lambda),\\
		  \|\sup_{0<\lambda<\lambda_1} \lambda^{ -4-}  M_2^{\pm}(\lambda)\|_{HS}+   \|\sup_{0<\lambda<\lambda_1} \lambda^{-3-}  \partial_\lambda M_2^{\pm}(\lambda)\|_{HS}\les 1. 
	\end{multline}
\end{lemma}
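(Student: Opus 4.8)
The plan is to feed the small-argument Bessel expansions \eqref{J0 def} and \eqref{Y0 def2} into the closed form \eqref{resolv def} of the free resolvent kernel and to sort the resulting terms by their joint behavior in $\lambda$ and in $r=|x-y|$. Writing $z=\lambda r$ and $H_1^\pm(z)=J_1(z)\pm iY_1(z)$, the prefactor $\pm\frac{i\lambda}{8\pi r}$ turns each monomial $z^{2k-1}$, resp.\ $z^{2k-1}\log z$, of the series into a term $\lambda^{2k}r^{2k-2}$, resp.\ $\lambda^{2k}r^{2k-2}\log(\lambda r)$, for $k=0,1,2,\dots$. The singular $-\frac{2}{\pi z}$ of $Y_1$ produces the $r^{-2}$ kernel, matching $G_0$ and confirming $M^\pm(0)=T$ as in \eqref{TandP}. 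At order $\lambda^2$ the contributions split into a part constant in $r$ and a part proportional to $\log r$; after sandwiching by the $v$'s the first is $\|V\|_1 g_1^\pm(\lambda)P$ and the second is $\lambda^2 vG_1v$, with $G_1$ as in \eqref{G1 def}. Matching constants shows $a_1\in\R\setminus\{0\}$, while the purely imaginary contribution $\pm i/(16\pi)$ that the $J_1$ term of $H_1^\pm$ injects into the $r$-constant part forces $\Im z_1\ne0$, hence $z_1\in\mathbb{C}\setminus\R$. The cubic terms of \eqref{J0 def}, \eqref{Y0 def2} are treated identically and produce $g_2^\pm(\lambda)vG_2v+\lambda^4 vG_3v$ with $a_2$ real and nonzero and $z_2$ non-real, as required by \eqref{g1 def}, \eqref{g2 def}.

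The error operators $M_j^\pm(\lambda)$ have kernels $v(x)E_j^\pm(\lambda,r)v(y)$, where $E_j^\pm$ is what remains of $R_0^\pm(\lambda^2)(x,y)$ once the explicit terms are removed, and I would estimate them separately in the two regimes $\lambda r\lesssim1$ and $\lambda r\gtrsim1$. For $\lambda r\lesssim1$ the bound comes from the tail of the Bessel series: one finds $E_0^\pm=\widetilde O(\lambda^2\log(\lambda r))$, $E_1^\pm=\widetilde O(\lambda^4r^2\log(\lambda r))$, and $E_2^\pm=\widetilde O(\lambda^6r^4\log(\lambda r))$, the $\widetilde O$ symbol (in fact $\widetilde O_1$) already supplying the first $\lambda$-derivative bound required for the $\partial_\lambda$ half of each estimate. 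For $\lambda r\gtrsim1$ I would instead use the large-argument representation \eqref{R0repr}, bounding $R_0^\pm(\lambda^2)$ by $\lambda^{1/2}r^{-3/2}$ and estimating each subtracted explicit term ($r^{-2}$, the $r$-independent $\lambda^2\log\lambda$, the $\lambda^2\log r$, and at the next stage the $\lambda^4r^2\log r$ pieces) on its own, using the constraint $\lambda>1/r$ to trade the negative power of $\lambda$ against the decaying resolvent bounds and obtain a net, slowly growing majorant in $r$.

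The $HS$ norms are then computed by taking the supremum over $0<\lambda<\lambda_1$ of the weighted errors $\lambda^{-2+}E_0^\pm$, $\lambda^{-2-}E_1^\pm$, $\lambda^{-4-}E_2^\pm$ and of their $\partial_\lambda$ analogues (which carry one fewer power of $\lambda$), majorizing each by a function $h(x,y)$ that grows at most like a small power of $r$ times a logarithm, and verifying $\iint v(x)^2h(x,y)^2v(y)^2\,dx\,dy<\infty$ through Lemmas \ref{EG:Lem} and \ref{bracket decay}. Two features govern the exponents: the magnitude of the weight is capped by the requirement that the supremum stay finite as $\lambda\to0$ (for instance the logarithm at order $\lambda^2$ is exactly what prevents the weight in \eqref{Mexp0} from reaching $\lambda^{-2}$, leaving $\lambda^{-2+}$), while the threshold on $\beta$ is set by square-integrability against the fixed weight $v^2\otimes v^2$. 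For \eqref{Mexp0}, \eqref{Mexp1} the only growth is $r^{-2}$ from $G_0$ and $\log r$ from $G_1$, both absorbed once $\beta>2$; the $r^2$ and $r^2\log r$ growth of $G_2,G_3$ in \eqref{Mexp2}, together with the correspondingly larger remainder, are square-integrable against $v^2\otimes v^2$ only when $\beta>4$. The derivative bounds then follow with no new ideas, since $\widetilde O$ packages the needed derivative control and differentiating the explicit scalars and powers merely lowers an exponent by one.

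The main obstacle is the matching at the transition $\lambda r\sim1$, where the explicit polynomial-in-$r$ terms are genuinely large yet the true resolvent is oscillatory and decaying. Producing a single $\lambda$-independent majorant $h$ valid simultaneously in both regimes and still integrable against the fixed weight $v^2\otimes v^2$ is the delicate point, and it is this balance---rather than the algebra of extracting the $G_j$ and $g_j^\pm$---that pins down the sharp decay thresholds $\beta>2$ and $\beta>4$.
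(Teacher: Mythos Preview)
Your proposal is correct and follows essentially the same route as the paper: extract the explicit terms from the small-$z$ Bessel expansions, split the remainder into the regimes $\lambda r\ll1$ and $\lambda r\gtrsim1$, and show the resulting kernel is dominated by $v(x)|x-y|^{\gamma}v(y)$ for a $\gamma$ compatible with the Hilbert--Schmidt condition $\beta>2+\gamma$. The one point worth noting is that what you flag as the ``main obstacle''---producing a single majorant valid across the transition $\lambda r\sim1$---the paper disposes of in one line by recording the large-argument bound in the flexible form $|\partial_\lambda^k R_0^\pm|\lesssim(\lambda r)^{1/2+\alpha}r^{k-2}$ for arbitrary $\alpha\ge0$ and simply choosing $\alpha$ (namely $\frac32-k$, $\frac32-k+$, $\frac72-k-$ for $M_0,M_1,M_2$) so that the large-$\lambda r$ estimate already matches the form of the small-$\lambda r$ one; this is equivalent to your ``trade $\lambda$ for $r$ via $\lambda>1/r$'' but packages both regimes into a single $\widetilde O_1$ symbol with no separate matching step. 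Also, the final HS check needs nothing as heavy as Lemmas~\ref{EG:Lem}--\ref{bracket decay}: once the majorant is $v(x)|x-y|^{\gamma}v(y)$ with $\gamma\ge0$, one just uses $|x-y|^\gamma\lesssim\la x\ra^\gamma+\la y\ra^\gamma$ and the double integral factors.
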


\begin{proof}

Using the notation introduced in \eqref{G0 def}--\eqref{g2 def} in \eqref{resolv def}, \eqref{J0 def}, and \eqref{Y0 def}, we obtain (for $ \lambda|x-y|\ll 1$)
\begin{align}\label{resolv expansion1}
	R_0^{\pm}(\lambda^2)(x,y) &=G_0(x,y)+ 
	\widetilde O_1(\lambda^{2-})\\ \label{resolv expansion2}
	R_0^{\pm}(\lambda^2)(x,y)  &=G_0(x,y)+g_1^{\pm}(\lambda)
	 +\lambda^2 G_1(x,y)+  
	\widetilde O_1(\lambda^{4}|x-y|^2 \log(\lambda|x-y|)).\\ \label{resolv expansion3}
	R_0^{\pm}(\lambda^2)(x,y)  &=G_0(x,y)+g_1^{\pm}(\lambda)
	 +\lambda^2 G_1(x,y)+g_2^\pm(\lambda)G_2(x,y)+\lambda^4G_3(x,y)\\
&\qquad \qquad \qquad	 + 
	\widetilde O_1(\lambda^{6}|x-y|^4 \log(\lambda|x-y|)). \nn
\end{align} 

In light of these expansions and using the notation in \eqref{TandP}, we define $M_j^\pm(\lambda)$ by the identities
\begin{align}\label{M0 exp}
M^{\pm}(\lambda) & =U+vR_0^{\pm}(\lambda^2)v =T+  M_0^{\pm}(\lambda).\\\label{M1 exp}
	M^{\pm}(\lambda) &= T+\|V\|_1g_1^{\pm}(\lambda)P
	+\lambda^2 vG_1v  + M_1^{\pm}(\lambda).\\ \label{M2 exp}
M^{\pm}(\lambda) &=T+\|V\|_1g_1^{\pm}(\lambda)P
	+\lambda^2 vG_1v+g_2^\pm(\lambda)vG_2v+\lambda^4vG_3v + M_2^{\pm}(\lambda).
\end{align} 
For the bounds on $M_j^\pm$'s we omit the superscripts.
	For $\lambda |x-y|\ll 1$, the bounds will follow  from the expansions \eqref{resolv expansion1}, \eqref{resolv expansion2}, \eqref{resolv expansion3}.  
	
	For $\lambda|x-y|\gtrsim 1$, we use \eqref{resolv def} and
	\eqref{JYasymp2} to see (for any $\alpha\geq 0$ and $k=0,1$)
	\begin{align}\label{R0high}
		|\partial_\lambda^k R_0(\lambda^2)(x,y)|
		=\bigg|\partial_\lambda^k \bigg[\frac{\lambda e^{i\lambda|x-y|}\omega(\lambda|x-y|)}{|x-y|}  \bigg]
		\bigg| 	\les 
		(\lambda |x-y|)^{\frac{1}{2}+\alpha}|x-y|^{k-2}.
	\end{align}
	Using \eqref{resolv expansion1}, \eqref{M0 exp}, and \eqref{R0high} with $\alpha=\frac32-k$, we have
	\begin{align*}
	M_0(\lambda)(x,y)&=\left\{ \begin{array}{lc} 
	                       v(x)v(y)\widetilde O_1(\lambda^{2-}), & \lambda|x-y|\ll 1\\
	                       v(x)v(y) [G_0(x,y)+  \widetilde O_1(\lambda^{2})], &\lambda |x-y|\gtrsim 1
	                     \end{array}\right.\\
	                     & = v(x)v(y)\widetilde O_1(\lambda^{2-} ).
	\end{align*}
This yields the bounds in \eqref{Mexp0} since $v(x)\les \la x\ra^{-2-}$.

	The other assertions of the lemma follow similarly. We note that we take $\alpha=\frac32-k+$ in \eqref{R0high} and use
	$$ \widetilde O_1(\lambda^{4}|x-y|^2 \log(\lambda|x-y|)) = \widetilde O_1(\lambda^{2 } (\lambda |x-y|)^{0+}  ),\,\,\,\,\,\,\text{ for } \lambda|x-y|\ll 1
	$$
	to obtain \eqref{Mexp1}, whereas we  take $\alpha=\frac72-k-$ in \eqref{R0high} and use 
	$$ \widetilde O_1(\lambda^{6}|x-y|^4 \log(\lambda|x-y|)) = \widetilde O_1(\lambda^{2 } (\lambda |x-y|)^{2+}  ),\,\,\,\,\,\,\text{ for } \lambda|x-y|\ll 1
	$$
	to obtain \eqref{Mexp2}.   We close the argument by noting that an operator with integral kernel $v(x)|x-y|^{\gamma}v(y)$, $\gamma > 0$,  is Hilbert-Schmidt provided $\beta >2+\gamma$.

\end{proof}

One can see
that the invertibility of $M^{\pm}(\lambda)$ as an operator on $L^2$ for small
$\lambda$ depends upon the
invertibility of the operator $T$  on $L^2$, see  \eqref{TandP}.   
We now give the definition of resonances at zero energy.

\begin{defin}\label{resondef}\begin{enumerate}
\item We say zero is a regular point of the spectrum
of $H = -\Delta+ V$ provided $T$ is invertible on $ L^2(\mathbb R^4)$.

\item Assume that zero is not a regular point of the spectrum. Let $S_1$ be the Riesz projection
onto the kernel of $ T $ as an operator on $ L^2(\mathbb R^4)$.
Then $ T +S_1$ is invertible on $ L^2(\mathbb R^4)$.  Accordingly, we define $D_0=( T +S_1)^{-1}$ as an operator
on $ L^2(\R^4)$.
We say there is a resonance of the first kind at zero if the operator $T_1:= S_1 P S_1$ is invertible on
$S_1L^2(\mathbb R^4)$.

\item  Assume that $T_1$ is not invertible on
$S_1L^2(\mathbb R^4)$. Let $S_2$ be the Riesz projection onto the kernel of $T_1$ as an operator on $S_1L^2(\R^4)$.   Then $T_1+S_2$ is invertible on
$S_1 L^2(\R^4)$.
We say there is a resonance of the second kind at zero if $S_2=S_1$.
If $S_1-S_2\neq 0$,  we say there is a resonance of the third kind. 
\end{enumerate}
\end{defin}

\noindent
{\bf Remarks.  i)} We note that $S_1-S_2\neq 0$ corresponds to the existence of a resonance
at zero energy, and $S_2\neq 0$ corresponds to the existence of an
eigenvalue at zero energy (see 
Section~\ref{sec:spec} below).  That is, a resonance of
the first kind means that there is a resonance at zero
only, a resonance of the second kind means that there is
an eigenvalue at zero only, and a resonance of the third
kind means that   there is both a resonance and an 
eigenvalue at zero energy.  For technical reasons, we 
need to employ different tools to invert 
$M^{\pm}(\lambda)$ for the different types of resonances.
It is well-known that different types of resonances at
zero energy lead to different expansions for 
$M^{\pm}(\lambda)^{-1}$ in other dimensions, see
\cite{ES2,ES,EG2}.
Accordingly, we will develop
different expansions for $M^{\pm}(\lambda)^{-1}$ in
the following sections.\\
{ \bf ii)} Since $ T $ is self-adjoint, $S_1$ is the orthogonal projection onto the kernel of $ T $, and we have
(with $D_0=( T +S_1)^{-1}$) $$S_1D_0=D_0S_1=S_1.$$
This statement also valid for $S_2$ and $(T_1+S_2)^{-1}$. \\
{\bf iii)} Since $T$ is a compact perturbation of the invertible operator $U$, the Fredholm alternative
guarantees that $S_1$ and $S_2$ are finite-rank projections in all cases.
 
 See Section~\ref{sec:spec} below for a
full characterization of the spectral subspaces of $L^2$ associated
to $H=-\Delta+V$.

\begin{defin}
	We say an operator $K:L^2(\R^4)\to L^2(\R^4)$ with kernel
	$K(\cdot,\cdot)$ is absolutely bounded if the operator with kernel
	$|K(\cdot,\cdot)|$ is bounded from $L^2(\R^4)$ to $L^2(\R^4)$.
\end{defin}
Note that  Hilbert-Schmidt and
finite rank operators are absolutely bounded.

\begin{lemma}\label{d0bounded}
 The operator $ D_0 $ is absolutely bounded in $L^2$.
\end{lemma}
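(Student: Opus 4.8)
The plan is to exploit the algebraic relation $T+S_1 = U + vG_0v + S_1$ together with the identity $U^2=I$ (recall $U$ takes values $\pm1$), iterating the resolvent identity once so that the only genuinely non-local piece becomes Hilbert--Schmidt. Set $W := vG_0v + S_1$, so that $T+S_1 = U+W$ is invertible and $D_0$ is a well-defined bounded operator. Left-multiplying $(U+W)D_0 = I$ by $U$ and using $U^2=I$ gives $D_0 = U - UWD_0$; substituting this expression for $D_0$ back into its own right-hand side yields
\begin{equation*}
	D_0 = U - UWU + U(WUW)D_0 .
\end{equation*}
The proof then reduces to showing that each of the three terms on the right is absolutely bounded.

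First I would verify that $vG_0v$ is absolutely bounded, which (together with the finite rank, hence absolutely bounded, projection $S_1$) handles the middle term $UWU = U(vG_0v+S_1)U$, since multiplication by the bounded function $U$ leaves the absolute value of a kernel unchanged and thus preserves absolute boundedness. The kernel of $vG_0v$ is $c\,v(x)v(y)|x-y|^{-2}$ with $v\geq 0$, which I would split at $|x-y|=1$. On $|x-y|\le 1$ it is dominated by $\|v\|_\infty^2\,c\,\indicator_{|x-y|\le1}|x-y|^{-2}$, i.e.\ up to a constant convolution against $\indicator_{|u|\le1}|u|^{-2}\in L^1(\R^4)$ (integrable since $\int_0^1 r^{-2}r^3\,dr<\infty$), which is bounded on $L^2$ by Young's inequality. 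On $|x-y|>1$ the positive kernel $c\,v(x)v(y)|x-y|^{-2}$ is controlled by a Schur test: $\sup_x v(x)\int \la x-y\ra^{-2} v(y)\,dy \les 1$ by Lemma~\ref{bracket decay}. Domination of positive kernels then gives absolute boundedness of $vG_0v$.

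The essential step is to show that $WUW$ is Hilbert--Schmidt; once this is known, $U(WUW)D_0$ is Hilbert--Schmidt because the Hilbert--Schmidt class is a two-sided ideal and $U,D_0$ are bounded, and Hilbert--Schmidt operators are absolutely bounded. Expanding $WUW=(vG_0v+S_1)U(vG_0v+S_1)$, every summand containing the finite-rank $S_1$ is automatically Hilbert--Schmidt, so the crux is the main term $vG_0v\,U\,vG_0v = vG_0(vUv)G_0v = vG_0 V G_0 v$, using $vUv=V$. Its kernel is $v(x)[G_0VG_0](x,y)v(y)$, and I would bound
\begin{equation*}
	\|vG_0VG_0v\|_{HS}^2 = \int_{\R^4} v(x)^2\Big(\int_{\R^4}|[G_0VG_0](x,y)|^2\,v(y)^2\,dy\Big)\,dx .
\end{equation*}
Here I would invoke the $\lambda=0$ case of Lemma~\ref{lem:locL2}, whose proof uses only the $|x-y|^{-2}$ part of the free resolvent and hence applies verbatim to $G_0$, giving $\|[G_0VG_0](x,\cdot)\|_{L^{2,-\sigma}}\les 1$ uniformly in $x$ for $\sigma>\max(\tfrac12,3-\beta)$. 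Since the decay of $v$ ensures $v(y)^2\les \la y\ra^{-2\sigma}$ under the standing hypothesis, the inner integral is $\les 1$ uniformly in $x$, and the outer integral is then $\les \|v\|_{L^2}^2<\infty$. Thus $WUW$, and with it the remainder $U(WUW)D_0$, is Hilbert--Schmidt. Combining the three steps, $D_0 = U - UWU + U(WUW)D_0$ is a sum of absolutely bounded operators, which proves the claim.

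The main obstacle is precisely the middle estimate: $vG_0v$ itself is \emph{not} Hilbert--Schmidt in four dimensions, because the kernel $v(x)v(y)|x-y|^{-2}$ fails to be square integrable across the diagonal (in $\R^4$ one has $\iint_{|x-y|<1}|x-y|^{-4}\,dx\,dy=\infty$). This is exactly why one cannot argue in a single step and must iterate the resolvent identity once to produce two factors of $G_0$ flanking $V$; the pair of free resolvents is what restores square integrability, via Lemma~\ref{lem:locL2}, and leaves only the harmless diagonal term $U$ and the absolutely bounded term $UWU$ to be treated directly.
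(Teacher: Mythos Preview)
Your proof is correct and follows essentially the same strategy as the paper: iterate the resolvent identity for $D_0$ once so that the off-diagonal piece becomes Hilbert--Schmidt, leaving only $U$ and an absolutely bounded middle term. The paper iterates from the right ($D_0 = U - D_0WU$) and exploits the identity $S_1U = -S_1vG_0v$ to rewrite $(vG_0v+S_1)U = vG_0w - S_1vG_0v$, whereas you iterate from the left and simply treat the $S_1$ cross-terms as finite rank; both routes land on the same key operator $vG_0VG_0v$ for the Hilbert--Schmidt estimate.

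Two small remarks. First, your Schur-test bound for the tail of $vG_0v$ and your final step ``the outer integral is then $\les \|v\|_{L^2}^2$'' both require $v\in L^2(\R^4)$, i.e.\ $|V(x)|\les\la x\ra^{-4-}$; the paper instead invokes the mapping property of the Riesz potential $I_2:L^{2,\sigma}\to L^{2,-\sigma}$ for $\sigma>1$ together with Lemmas~\ref{EG:Lem} and~\ref{bracket decay}, and asserts the result for $v(x)\les\la x\ra^{-1-}$. Since every application of Lemma~\ref{d0bounded} in the paper already assumes at least $\beta>4$, this discrepancy is harmless. Second, your appeal to Lemma~\ref{lem:locL2} at $\lambda=0$ is a clean way to get the uniform-in-$x$ weighted $L^2$ bound on $G_0VG_0(x,\cdot)$, and is arguably more self-contained than the paper's direct citation of Lemmas~\ref{EG:Lem} and~\ref{bracket decay}.
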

\begin{proof}
First note that
	$$
	0=S_1(U+vG_0v) \quad \Rightarrow \quad
	S_1U=-S_1vG_0v\quad \Rightarrow \quad S_1=-S_1vG_0w.
$$
Using this and  the resolvent identity
	\begin{align*}
		D_0=U-D_0(vG_0v+S_1)U
	\end{align*}
	twice, we obtain
	\begin{align*}
		D_0=U-U(vG_0v+S_1)U+D_0(vG_0w-S_1vG_0v)(vG_0w-S_1vG_0v).
	\end{align*}	
	We note that $S_1$ is a finite rank projection operator, and
	$U$ is absolutely bounded on $L^2$.  Note that
	$vG_0w$ is absolutely bounded on $L^2$ since $G_0$ is a multiple
	of the fractional integral operator $I_2$ which is a compact
	operator on $L^{2,\sigma}\to L^{2,-\sigma}$ if $\sigma>1$,
	see Lemma~2.3 of \cite{Jen}.
	Thus, if $v(x)\les \la x\ra^{-1-}$ then $vG_0w$ is absolutely
	bounded on $L^2$.  We note that by \eqref{G0 def},
	Lemma~\ref{EG:Lem}, and Lemma~\ref{bracket decay} one can see that 
	$(vG_0w-S_1vG_0v)(vG_0w-S_1vG_0v)$ is Hilbert-Schmidt provided
	$v(x)\les \la x\ra^{-1-}$.  Thus the the final operator in the expansion for $D_0$ is Hilbert-Schmidt
	since the composition of a bounded and a Hilbert-Schmidt
	operator is Hilbert-Schmidt.
\end{proof}

To invert $M^\pm(\lambda)=U+vR_0^\pm(\lambda^2)v$  for small $\lambda$, we use the
following  lemma (see Lemma 2.1  in \cite{JN}).
\begin{lemma}\label{JNlemma}
Let $A$ be a closed operator on a Hilbert space $\mathcal{H}$ and $S$ a projection. Suppose $A+S$ has a bounded
inverse. Then $A$ has a bounded inverse if and only if
$$
B:=S-S(A+S)^{-1}S
$$
has a bounded inverse in $S\mathcal{H}$, and in this case
$$
A^{-1}=(A+S)^{-1}+(A+S)^{-1}SB^{-1}S(A+S)^{-1}.
$$
\end{lemma}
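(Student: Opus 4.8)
The plan is to treat this as a purely algebraic Schur-complement (Feshbach) reduction; no analysis beyond the boundedness of $(A+S)^{-1}$ is required, since $S$ is a projection and $A$ is closed. Throughout I would abbreviate $C:=(A+S)^{-1}$, so that $A=(A+S)-S$ and $(A+S)C=C(A+S)=I$. Two things must be established — the equivalence of invertibility and the explicit formula — and I would organize the whole argument around the candidate operator $X:=C+CSB^{-1}SC$, where $B^{-1}$ denotes the inverse of $B$ inside $S\mathcal H$ and $SB^{-1}S$ its extension to all of $\mathcal H$. The two structural facts I would record at the outset are that $B=S-SCS$ maps $\mathcal H$ into $S\mathcal H$ and annihilates $I-S$, so that $B=SB=BS=SBS$, and that consequently $B\,(SB^{-1}S)=(SB^{-1}S)\,B=S$ on $\mathcal H$.

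For the forward implication (if $B$ is invertible on $S\mathcal H$, then $A$ is invertible with $A^{-1}=X$) I would verify $AX=I$ and $XA=I$ directly. Writing $A=(A+S)-S$ and using $(A+S)C=I$ gives $AX=I+SB^{-1}SC-SC-SCSB^{-1}SC$. The three trailing terms then collapse: $SB^{-1}SC-SCSB^{-1}SC=(S-SCS)\,SB^{-1}SC=B\,(SB^{-1}S)\,C=SC$, which cancels the $-SC$ term and leaves $AX=I$. The identity $XA=I$ follows from the same bookkeeping applied on the other side. Boundedness of $X$ is automatic, being a composition of the bounded operators $C$ and $SB^{-1}S$.

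For the converse (if $A$ is invertible, then $B$ is invertible on $S\mathcal H$) I would exhibit the inverse explicitly as $S+SA^{-1}S$. The engine is the pair of resolvent identities $A^{-1}=C+A^{-1}SC=C+CSA^{-1}$, obtained from $A^{-1}-C=A^{-1}\big((A+S)-A\big)C=A^{-1}SC$. Sandwiching these between copies of $S$ and setting $P:=SCS$ and $R:=SA^{-1}S$ (both acting on $S\mathcal H$, where $B=S-P$) yields $R=P+RP$ and $R=P+PR$, hence $RB=BR=P$. A short computation then gives $B(S+SA^{-1}S)=S-P+R-PR=S$ and, symmetrically, $(S+SA^{-1}S)B=S$ on $S\mathcal H$, so $B$ is boundedly invertible there.

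I anticipate that the only real subtlety — and the place where sign or indexing errors tend to creep in — is the projection bookkeeping: keeping straight which operators act on $\mathcal H$ and which on $S\mathcal H$, and consistently invoking $SS=S$, $SB=BS=B$, and $B\,(SB^{-1}S)=(SB^{-1}S)\,B=S$ so that each telescoping cancellation is genuinely valid. There is no hard analysis involved; once the identities $B=SBS$ and $B\,SB^{-1}S=S$ are in place, both implications reduce to a few lines of operator algebra, and the displayed formula for $A^{-1}$ is exactly the expression obtained in the forward computation.
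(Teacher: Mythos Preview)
Your argument is correct. The paper does not actually prove this lemma; it simply cites it as Lemma~2.1 of Jensen--Nenciu~\cite{JN} and uses the result as a black box. What you have written is the standard Schur-complement/Feshbach verification: with $C=(A+S)^{-1}$ and $D=SB^{-1}S$ you check $A(C+CDC)=I$ and $(C+CDC)A=I$ via the telescoping identity $BD=DB=S$, and for the converse you produce the explicit inverse $S+SA^{-1}S$ from the resolvent relations $A^{-1}-C=A^{-1}SC=CSA^{-1}$. All of the algebra checks out, and your care with the domain of $A$ (namely that $C$ maps into $\mathrm{dom}(A)$, so $AX$ is globally defined) handles the only point where closedness of $A$ enters. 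There is nothing to compare against in the paper itself.
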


We will apply this lemma with $A=M^\pm(\lambda)$ and $S=S_1$,
the orthogonal projection onto the kernel of
$T$. Thus, we need to show that $M^{\pm}(\lambda)+S_1$
has a bounded inverse in $L^2(\mathbb R^4)$ and
\begin{align}\label{B defn}
  B_{\pm}(\lambda) =S_1-S_1(M^\pm(\lambda)+S_1)^{-1}S_1
\end{align}
has a bounded inverse in $S_1L^2(\mathbb R^4)$.

The invertibility of the operator $B_\pm$ will be studied in various different ways depending on the resonance type at zero. For $M^\pm(\lambda)+S_1$, we have

\begin{lemma}\label{M+S1inverse}

	Suppose that zero is not a regular point of the spectrum of  $H=-\Delta+V$, and let $S_1$ be the corresponding
	Riesz projection. Then for   sufficiently small $\lambda_1>0$, the operators
	$M^{\pm}(\lambda)+S_1$ are invertible for all $0<\lambda<\lambda_1$ as bounded operators on $L^2(\R^4)$.
	Further, one has (with $\widetilde g_1^\pm (\lambda)=\|V\|_1g_1^\pm(\lambda)$)
	\begin{align}\label{M plus S}
        (M^{\pm}(\lambda)+S_1)^{-1}&=
        D_0-\widetilde g_1(\lambda)D_0PD_0-\lambda^2D_0vG_1vD_0
        +\widetilde O_1(\lambda^{2+})\\\label{M plus Sv2}
        &=D_0 
        +\widetilde O_1(\lambda^{2-})
	\end{align}
	as an absolutely bounded operator on $L^2(\R^4)$ 
	provided $v(x)\lesssim \langle x\rangle^{-2-}$.

\end{lemma}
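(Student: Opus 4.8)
The plan is to realize $M^{\pm}(\lambda)+S_1$ as a small perturbation of the invertible operator $T+S_1=D_0^{-1}$ and to construct its inverse through a Neumann series. Starting from the expansion \eqref{Mexp1} of Lemma~\ref{lem:M_exp}, I would write
$$
M^{\pm}(\lambda)+S_1 = (T+S_1) + E^{\pm}(\lambda), \qquad E^{\pm}(\lambda):= \widetilde g_1^\pm(\lambda) P + \lambda^2 vG_1v + M_1^\pm(\lambda),
$$
where $\widetilde g_1^\pm(\lambda)=\|V\|_1 g_1^\pm(\lambda)$. Each summand of $E^\pm(\lambda)$ is an absolutely bounded operator of size $\widetilde O(\lambda^{2-})$: by \eqref{g1 def} the scalar $\widetilde g_1^\pm(\lambda)=\widetilde O(\lambda^{2-})$ and $P$ is rank one; the operator $vG_1v$ is Hilbert--Schmidt when $v(x)\les\la x\ra^{-2-}$ (its Hilbert--Schmidt norm being controlled by $\iint v(x)^2(\log|x-y|)^2 v(y)^2\,dx\,dy<\infty$), so $\lambda^2 vG_1v=\widetilde O(\lambda^2)$; and $M_1^\pm(\lambda)=\widetilde O_1(\lambda^{2+})$ in Hilbert--Schmidt norm by \eqref{Mexp1}. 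In particular $E^\pm(\lambda)=\widetilde O_1(\lambda^{2-})$ as an absolutely bounded operator.

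Next I would factor out the invertible operator, writing $M^\pm(\lambda)+S_1 = (T+S_1)\big(I+D_0E^\pm(\lambda)\big)$. Since $D_0$ is absolutely bounded by Lemma~\ref{d0bounded}, we have $\|D_0E^\pm(\lambda)\|_{L^2\to L^2}\les \lambda^{2-}$, which is strictly less than $1$ once $\lambda<\lambda_1$ for a sufficiently small $\lambda_1$. This gives invertibility of $M^\pm(\lambda)+S_1$ together with the Neumann series
$$
(M^\pm(\lambda)+S_1)^{-1}=\sum_{k=0}^{\infty}(-D_0E^\pm(\lambda))^k D_0.
$$
The $k=0$ term is $D_0$; the $k=1$ term is $-D_0E^\pm(\lambda)D_0 = -\widetilde g_1^\pm(\lambda)D_0PD_0-\lambda^2 D_0vG_1vD_0 - D_0M_1^\pm(\lambda)D_0$, whose first two summands are precisely the explicit terms in \eqref{M plus S} and whose last summand is $\widetilde O_1(\lambda^{2+})$ (as $D_0$ is absolutely bounded and $M_1^\pm=\widetilde O_1(\lambda^{2+})$). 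Each $k\ge 2$ term is a product of at least two factors of size $\widetilde O(\lambda^{2-})$, hence $\widetilde O_1(\lambda^{4-})\subset\widetilde O_1(\lambda^{2+})$. Collecting these yields \eqref{M plus S}; absorbing the two explicit $\widetilde O(\lambda^{2-})$ terms into the remainder then gives the coarser expansion \eqref{M plus Sv2}.

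The conceptual content is a routine Neumann-series perturbation, so the main obstacle is the bookkeeping of the $\widetilde O_1$ notation, namely tracking the first $\lambda$-derivative through the series. Here one uses that differentiation costs one power of $\lambda$ at each factor: $\partial_\lambda\widetilde g_1^\pm=\widetilde O(\lambda^{1-})$, $\partial_\lambda(\lambda^2 vG_1v)=\widetilde O(\lambda)$, and $\partial_\lambda M_1^\pm=\widetilde O(\lambda^{1+})$ in Hilbert--Schmidt norm by \eqref{Mexp1}, so that $\partial_\lambda E^\pm=\widetilde O(\lambda^{1-})$ and $\partial_\lambda(D_0E^\pm)=\widetilde O(\lambda^{1-})$. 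Differentiating a $k$-fold product via the Leibniz rule produces $k$ terms, each of size $\widetilde O(\lambda^{(2k-1)-})$; for $k\ge 2$ this is $\widetilde O(\lambda^{3-})\subset\widetilde O(\lambda^{1+})$, exactly the derivative bound required for the $\widetilde O_1(\lambda^{2+})$ remainder. Finally one checks absolute boundedness of every retained term---$D_0$ by Lemma~\ref{d0bounded}, $D_0PD_0$ because $P$ is rank one, $D_0vG_1vD_0$ because $vG_1v$ is Hilbert--Schmidt, and products thereof---so that the whole expansion holds in the class of absolutely bounded operators on $L^2(\R^4)$.
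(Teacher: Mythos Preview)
Your argument is correct and follows essentially the same route as the paper's proof: write $M^\pm(\lambda)+S_1$ as $(T+S_1)$ plus the small perturbation coming from \eqref{Mexp1}, factor out $D_0^{-1}$, and expand via a Neumann series. The paper factors on the other side (obtaining $D_0[\mathbbm 1+\widetilde g_1 PD_0+\lambda^2 vG_1vD_0+M_1 D_0]^{-1}$) and is terser about the derivative and absolute-boundedness bookkeeping, but the substance is identical.
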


\begin{proof}

	We give the proof for $M^+(\lambda)$ and drop the superscript from
	the formulas, $M^-(\lambda)$ follows similarly.
	We use the expansion \eqref{Mexp1} for $M(\lambda)$ given in 
	Lemma~\ref{lem:M_exp}, and
	then for $\lambda<\lambda_1$ sufficiently small we have 
	\begin{align*}
		(M(\lambda)+S_1)^{-1}
		&=[T+S_1+\widetilde  g_1(\lambda)P
				+\lambda^2 vG_1v+M_1(\lambda)]^{-1}\\
		&=D_0[1+\widetilde  g_1(\lambda)PD_0
		+\lambda^2 vG_1vD_0+M_1(\lambda)D_0]^{-1}.
	\end{align*}		 
	Using a Neumann series expansion and the error bounds on $M_1$ in \eqref{Mexp1},
	we have 
	$$
	(M^{\pm}(\lambda)+S_1)^{-1}=
   	D_0-\widetilde g_1(\lambda)D_0PD_0-\lambda^2D_0vG_1vD_0
	+\widetilde O_1(\lambda^{2+})=D_0 
        +\widetilde O_1(\lambda^{2-}).
	$$
	These operators are absolutely bounded since $D_0$ is an absolutely bounded operator.  
\end{proof}

We will use this lemma in all cases, however we also need a refinement if there is an eigenvalue at zero, see  \eqref{MS eval}.

\section{Resonance of the first kind}\label{sec:first}
Here we consider the case of a resonance of the first
kind, that is when $S_1\neq 0$ and $S_2=0$.  
We note that in this case $S_1$ is of rank one by 
Corollary~\ref{ranks}. 
In this section we develop the tools necessary to
prove the first claim of Theorem~\ref{thm:main} when
there is only a resonance at zero energy.  In particular,
we prove
\begin{theorem}\label{thm:res1}
	
	Suppose that $|V(x)|\les \la x\ra^{-4-}$.
	If there is a resonance of the first kind at zero,
	then there is a rank one operator $F_t$
	such that
	$$
		\| e^{itH}\chi(H) P_{ac}(H)-F_t \|_{L^1\to L^\infty} \les
		t^{-1} ,
		\qquad t>2.
	$$
	with
	$$
		\|F_t\|_{L^1\to L^\infty} \les \frac{1}{\log t},
		\qquad t>2.
	$$

\end{theorem}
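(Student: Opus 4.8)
The plan is to substitute the symmetric resolvent identity \eqref{res_exp} into the Stone formula \eqref{Stone} and to isolate the most singular part of $M^\pm(\lambda)^{-1}$ as $\lambda\to0$. To invert $M^\pm(\lambda)$ I would apply Lemma~\ref{JNlemma} with $A=M^\pm(\lambda)$ and $S=S_1$. Since Lemma~\ref{M+S1inverse} already supplies $(M^\pm(\lambda)+S_1)^{-1}$, the task reduces to inverting $B_\pm(\lambda)$ from \eqref{B defn} on $S_1L^2$. Inserting the expansion \eqref{M plus S} and using $S_1D_0=D_0S_1=S_1$ together with $S_1PS_1=T_1$ (see \eqref{TandP}), I expect
\[
  B_\pm(\lambda)=\|V\|_1\,g_1^{\pm}(\lambda)\,T_1+\lambda^2 S_1vG_1vS_1+\widetilde O_1(\lambda^{2+}).
\]
In the resonance-of-the-first-kind case $T_1$ is invertible on $S_1L^2$ by Definition~\ref{resondef}, and $S_1$ is rank one by Corollary~\ref{ranks}, so $B_\pm(\lambda)=b_\pm(\lambda)S_1$ for a scalar $b_\pm(\lambda)=\|V\|_1 g_1^{\pm}(\lambda)\,t_1+O(\lambda^2)$ with $t_1\neq0$ real. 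As $g_1^{\pm}(\lambda)\sim\lambda^2\log\lambda$ dominates the $\lambda^2$ correction, $B_\pm(\lambda)^{-1}=b_\pm(\lambda)^{-1}S_1$ with $b_\pm(\lambda)^{-1}\sim(\lambda^2\log\lambda)^{-1}$.

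Feeding $B_\pm(\lambda)^{-1}$ back through Lemma~\ref{JNlemma} shows that the leading singular term of $M^\pm(\lambda)^{-1}$ is $b_\pm(\lambda)^{-1}S_1$, with an absolutely bounded remainder that is either regular in $\lambda$ or carries extra powers of $\lambda$. Substituting into \eqref{res_exp} and replacing each outer resolvent by its leading term $G_0$ from \eqref{resolv expansion2}, the dominant contribution to $R_V^+(\lambda^2)-R_V^-(\lambda^2)$ is
\[
  \big(b_-(\lambda)^{-1}-b_+(\lambda)^{-1}\big)\,G_0v\,S_1\,vG_0
\]
up to a fixed constant. Writing $S_1=\phi\la\phi,\cdot\ra$, this operator has kernel proportional to $(G_0v\phi)(x)\,\overline{(G_0v\phi)(y)}$; it is therefore rank one and maps $L^1\to L^\infty$ provided the resonance function $G_0v\phi=(-\Delta)^{-1}(v\phi)$ is bounded. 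This boundedness, together with the finer expansion \eqref{Mexp2}, is where the hypothesis $\beta>4$ (that is $v\les\la x\ra^{-2-}$) is used.

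I would then define $F_t$ to be the Stone-formula contribution of this leading term, a fixed multiple of
\[
  \Big[\int_0^\infty e^{it\lambda^2}\lambda\chi(\lambda)\big(b_-(\lambda)^{-1}-b_+(\lambda)^{-1}\big)\,d\lambda\Big]\;G_0v\,S_1\,vG_0.
\]
Using $g_1^+=\overline{g_1^-}$ from \eqref{g1 def}, the scalar $b_\pm(\lambda)^{-1}$ has common leading real singularity $\sim(\lambda^2\log\lambda)^{-1}$, which cancels in the difference, leaving $b_-(\lambda)^{-1}-b_+(\lambda)^{-1}=O\big(\lambda^{-2}(\log\lambda)^{-2}\big)$. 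After the substitution $u=\lambda^2$ the scalar integral reduces to the form $\int_0^{\lambda_1^2}e^{itu}\,[u(\log u)^2]^{-1}\,du$, which I would estimate by splitting at $u\sim 1/t$: the non-oscillatory region $u<1/t$ contributes $\sim1/\log t$ and the oscillatory tail is no larger. This yields $\|F_t\|_{L^1\to L^\infty}\les 1/\log t$, with $F_t$ of rank one.

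For the remainder $e^{itH}\chi(H)P_{ac}(H)-F_t$ I would collect every remaining piece: the corrections to $b_\pm^{-1}$, the subleading terms of the resolvent expansions, and the terms in which a factor $g_1^{\pm}(\lambda)$ or $\lambda^2$ from the outer resolvents cancels the singular denominator and leaves a $\lambda$-regular coefficient. For each I would bound the kernel of $\int_0^\infty e^{it\lambda^2}\lambda\chi(\lambda)E^\pm(\lambda)(x,y)\,d\lambda$ uniformly in $x,y$ through a single integration by parts in $\lambda$, writing $e^{it\lambda^2}=(2it\lambda)^{-1}\partial_\lambda e^{it\lambda^2}$; this gains the factor $t^{-1}$ whenever $E^\pm$ and $\partial_\lambda E^\pm$ are integrable against the relevant weights, which is the content of the $\widetilde O_1$ bounds of Lemmas~\ref{lem:M_exp} and~\ref{M+S1inverse}. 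Because the free resolvent by itself maps neither $L^1\to L^2$ nor $L^2\to L^\infty$ (its kernel has a $|x-y|^{-2}$ singularity), the argument must keep two free resolvents on each side of $M^\pm(\lambda)^{-1}$ and invoke the iterated bound of Lemma~\ref{lem:locL2} to reach $L^1\to L^\infty$. I expect the main obstacle to be exactly this remainder analysis: organizing the bookkeeping so that the full singular scalar $b_\pm^{-1}$ is placed entirely inside $F_t$ (otherwise a spurious $1/(\log t)^2$ term survives) while verifying that every remaining integrand is genuinely integrable after one integration by parts, so that the interaction of the $\log\lambda$ factors with the derivative bounds produces the clean $t^{-1}$ rate.
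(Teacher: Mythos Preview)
Your strategy matches the paper's: invert $M^\pm(\lambda)$ via Lemma~\ref{JNlemma}, identify $B_\pm(\lambda)^{-1}=f^\pm(\lambda)S_1$ with $f^\pm(\lambda)\sim(\lambda^2\log\lambda)^{-1}$, isolate the rank-one piece carrying $f^+-f^-$, and integrate by parts on the remainder. The scalar estimate you sketch for $\int e^{it\lambda^2}\lambda\chi(\lambda)(f^+-f^-)\,d\lambda$ is exactly Lemma~\ref{log decay}.

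There is one genuine inconsistency. You substitute into \eqref{res_exp} and define $F_t$ through $G_0vS_1vG_0$, then later note that two free resolvents are needed on each side because a single $R_0^\pm$ does not map $L^1\to L^2$. These cannot both stand: the operator $G_0v\phi$ is the resonance function $-g$ of Lemma~\ref{lem:S1char}, and you assert it is bounded, but in four dimensions $G_0$ has kernel $|x-y|^{-2}$, which is exactly the failing endpoint of Hardy--Littlewood--Sobolev from $L^2$ to $L^\infty$. The paper does not attempt to prove $g\in L^\infty$; instead it works throughout with the iterated identity \eqref{bs finite}--\eqref{bs tail}, so that the singular piece is
\[
(f^+(\lambda)-f^-(\lambda))\,G_0VG_0v\,S_1\,vG_0VG_0,
\]
and the $L^1\to L^\infty$ bound follows directly from Lemma~\ref{lem:locL2} and the absolute boundedness of $S_1$, as in \eqref{eq:L2 business}. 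Your $F_t$ should be defined this way from the outset.

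A second point you underestimate is the remainder analysis. Once you work with \eqref{bs tail}, the difference of `$+$' and `$-$' terms has four outer resolvents, and the paper organizes this via the telescoping identity \eqref{alg fact} (Lemma~\ref{lem:ugly}). The terms where a single $R_0^\pm-G_0$ appears contribute factors like $\chi(\lambda r)\log(\lambda r)$, and after integration by parts one is left with integrals producing $1+\log^-(|x-y|)$; checking that $v(y)G_0V(1+\log^-|\cdot-x|)\in L^2_y$ uniformly in $x$ is where the decay $|V|\les\la x\ra^{-4-}$ is actually spent, not in the expansion \eqref{Mexp2} (the first-kind case uses only \eqref{Mexp1}). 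Your sketch of ``a single integration by parts'' is correct in spirit, but the bookkeeping is more delicate than you indicate.
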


 We will need the following lemma to obtain the time-decay rate for $F_t$ in Theorem~\ref{thm:res1}.
\begin{lemma}\label{log decay}

	If $\mathcal E(\lambda)=\widetilde O_1((\lambda \log \lambda)^{-2})$, then 
	$$
		\bigg|\int_0^\infty e^{it\lambda^2}
		\lambda \chi(\lambda)
		\mathcal E(\lambda)\, d\lambda\bigg| 
		\les \frac{1}{\log t}, \qquad t>2.
	$$

\end{lemma}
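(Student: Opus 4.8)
The plan is to exploit the oscillation of $e^{it\lambda^2}$ by splitting the $\lambda$-integral at the scale $\lambda=t^{-1/2}$, at which the phase $t\lambda^2$ is of order one. First I would unwind the hypothesis: $\mathcal E(\lambda)=\widetilde O_1((\lambda\log\lambda)^{-2})$ means $\mathcal E(\lambda)=O(\lambda^{-2}(\log\lambda)^{-2})$ together with the derivative bound obtained by differentiating $(\lambda\log\lambda)^{-2}$, namely $\mathcal E'(\lambda)=O(\lambda^{-3}(\log\lambda)^{-2})$ on $(0,\lambda_1)$ (the accompanying $\lambda^{-3}(\log\lambda)^{-3}$ term being lower order since $|\log\lambda|^{-1}\to 0$). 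Because the integrand $\lambda\chi(\lambda)\mathcal E(\lambda)=O(\lambda^{-1}(\log\lambda)^{-2})$ is absolutely integrable on $\mathrm{supp}\,\chi$, the whole integral is bounded by an absolute constant, so the estimate is immediate for $t$ in any bounded range $2<t\le t_0$; hence I may assume $t$ large, and in particular $t^{-1/2}<\lambda_1/2$, so that $\chi\equiv 1$ across the splitting point.

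On the low-energy piece $\int_0^{t^{-1/2}}$ I would discard the oscillation, bounding $|e^{it\lambda^2}|=1$ and integrating the weight explicitly:
$$
\Big|\int_0^{t^{-1/2}} e^{it\lambda^2}\lambda\chi(\lambda)\mathcal E(\lambda)\,d\lambda\Big|
\les \int_0^{t^{-1/2}}\frac{d\lambda}{\lambda(\log\lambda)^2}
=\Big[\frac{-1}{\log\lambda}\Big]_0^{t^{-1/2}}=\frac{2}{\log t}.
$$
This is the term responsible for the stated $1/\log t$ decay rate.

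On the high-energy piece $\int_{t^{-1/2}}^{\lambda_1}$ I would write $\lambda e^{it\lambda^2}=\frac{1}{2it}\partial_\lambda e^{it\lambda^2}$ and integrate by parts, with $h:=\chi\mathcal E$. The boundary term at $\lambda_1$ vanishes because $\chi(\lambda_1)=0$, while the boundary term at $t^{-1/2}$ is controlled by $\frac{1}{2t}|\mathcal E(t^{-1/2})|\les \frac{1}{t}\cdot t(\log t)^{-2}=(\log t)^{-2}$. The leftover integral $\frac{1}{2it}\int_{t^{-1/2}}^{\lambda_1}e^{it\lambda^2}h'(\lambda)\,d\lambda$ splits according to $h'=\chi'\mathcal E+\chi\mathcal E'$: the first part is supported near $\lambda\sim\lambda_1$ and contributes $O(1/t)$, and the second is governed by $\int_{t^{-1/2}}^{\lambda_1}\lambda^{-3}(\log\lambda)^{-2}\,d\lambda$. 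Here lies the main obstacle: the logarithmic weight is slowly varying, so crudely bounding $(\log\lambda)^{-2}$ by its supremum would throw away the gain; instead I would recognize $\lambda^{-3}(\log\lambda)^{-2}$ as, up to a bounded factor, $-\tfrac12\partial_\lambda[\lambda^{-2}(\log\lambda)^{-2}]$, so that the integral is dominated by the value of $\lambda^{-2}(\log\lambda)^{-2}$ at the lower endpoint $\lambda=t^{-1/2}$, namely $\les t(\log t)^{-2}$. Dividing by $t$ gives $O((\log t)^{-2})$ for this contribution as well. Collecting the three estimates, the low-energy piece furnishes the $1/\log t$ bound while both high-energy contributions are $O((\log t)^{-2})$ and $O(1/t)$, which are better, completing the argument.
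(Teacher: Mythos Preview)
Your proof is correct and follows essentially the same strategy as the paper: split at $\lambda=t^{-1/2}$, bound the low piece trivially to obtain the $1/\log t$, and integrate by parts on the high piece to get $O((\log t)^{-2})$. The only minor difference is in estimating $\int_{t^{-1/2}}^{\lambda_1}\lambda^{-3}(\log\lambda)^{-2}\,d\lambda$: the paper splits this further at $\lambda=t^{-1/4}$ (using $|\log\lambda|\gtrsim\log t$ on the inner piece and dropping the logarithm on the outer piece), whereas you recognize the integrand as essentially a total derivative; both routes yield the same $t(\log t)^{-2}$ bound.
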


\begin{proof}

	We first divide the integral into two pieces,
	\begin{align*}
		\int_0^\infty e^{it\lambda^2}
		\lambda \chi(\lambda)
		\mathcal E(\lambda)\, d\lambda
		=\int_0^{t^{-1/2}} e^{it\lambda^2}
		\lambda \chi(\lambda)
		\mathcal E(\lambda)\, d\lambda
		+\int_{t^{-1/2}}^\infty e^{it\lambda^2}
		\lambda \chi(\lambda)
		\mathcal E(\lambda)\, d\lambda
	\end{align*}
	For the first integral, we note
	\begin{align*}
		\bigg| \int_0^{t^{-1/2}} e^{it\lambda^2}
		\lambda \chi(\lambda)
		\mathcal E(\lambda)\, d\lambda\bigg|
		\les \int_0^{t^{-1/2}} \frac{1}{\lambda (\log \lambda)^2}\, d\lambda
		\les \frac{1}{\log t}
	\end{align*}
	For the second integral, we integrate by parts once
	to see
	\begin{multline*}
		\bigg| \int_{t^{-1/2}}^\infty e^{it\lambda^2}
		\lambda \chi(\lambda)
		\mathcal E(\lambda)\, d\lambda\bigg|
		\les \frac{|\mathcal E(t^{-1/2})|}{t}+\frac{1}{t}
		\int_{t^{-1/2}}^\infty \bigg|
		\frac{d}{d\lambda}\big(\chi(\lambda)
		\mathcal E(\lambda)\big) \bigg|\, d\lambda\\
		\les \frac{1}{(\log t)^2}+ \frac{1}{t}+\frac{1}{t}\int_{t^{-\f12}}^{t^{-\f14}} \frac{1}{\lambda^2 |\log t|^2}\, d\lambda+
		\frac{1}{t} \int_{t^{-\f14}}^{\f12} \frac{1}{\lambda^2}\, d\lambda \les
		\frac{1}{(\log t)^2}.
	\end{multline*}
	Here we used that the integral converges on
	$[\f12,\infty)$.

\end{proof}

To  invert $M^{\pm}(\lambda)$ using Lemma~\ref{JNlemma}, we need to compute $B_\pm(\lambda)$,
\eqref{B defn}. 

\begin{lemma}

	In the case of a resonance of the first kind at
	zero, under the hypotheses of Theorem~\ref{thm:res1}
	we have
	for small $\lambda$, $B_{\pm}(\lambda)$ is invertible and
	\begin{align}\label{Binv def}
		B_{\pm}(\lambda)^{-1}= 
		f^{\pm}(\lambda)S_1,
	\end{align}
where
\begin{align}\label{f defn}
	f^{+}(\lambda)=\frac{1}{\lambda^2}
	\frac{1}{a\log\lambda+z +\widetilde O_1(\lambda^{0+})}=\overline{f^-(\lambda)} 
\end{align}
for some $a\in \R/\{0\}$ and $z\in \mathbb C / \R$.
\end{lemma}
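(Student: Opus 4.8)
The plan is to substitute the expansion of $(M^\pm(\lambda)+S_1)^{-1}$ from Lemma~\ref{M+S1inverse} into the definition \eqref{B defn} and then exploit that $S_1$ has rank one. Writing $B_\pm(\lambda)=S_1-S_1(M^\pm(\lambda)+S_1)^{-1}S_1$ and inserting \eqref{M plus S}, the crucial simplification is the identity $S_1D_0=D_0S_1=S_1$ (from the remark following Definition~\ref{resondef}), which collapses each sandwiched term: $S_1D_0S_1=S_1$, $S_1D_0PD_0S_1=S_1PS_1$, and $S_1D_0vG_1vD_0S_1=S_1vG_1vS_1$. The leading $S_1$ then cancels the $S_1$ in \eqref{B defn}, leaving
$$
B_\pm(\lambda)=\widetilde g_1^\pm(\lambda)\,S_1PS_1+\lambda^2\,S_1vG_1vS_1+\widetilde O_1(\lambda^{2+}),
$$
where the error, being squeezed between the finite-rank projections $S_1$, remains absolutely bounded of the indicated size.

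Next I would invoke Corollary~\ref{ranks} to fix a real, normalized vector $\phi$ spanning the range of $S_1$, so that $S_1=\langle\phi,\cdot\rangle\phi$ and every operator $S_1KS_1$ equals the scalar $\langle\phi,K\phi\rangle$ times $S_1$. In particular $S_1PS_1=c\,S_1$ with $c=\|V\|_1^{-1}|\langle v,\phi\rangle|^2$, and $S_1vG_1vS_1=d\,S_1$ with $d=\langle v\phi,G_1v\phi\rangle\in\R$. A resonance of the first kind is precisely the statement that $T_1=S_1PS_1$ is invertible on the one-dimensional space $S_1L^2$, i.e.\ $c\neq0$. Recalling $\widetilde g_1^\pm(\lambda)=\|V\|_1g_1^\pm(\lambda)=\|V\|_1\lambda^2(a_1\log\lambda+z_1)$ from \eqref{g1 def}, collecting the $\lambda^2$ terms, and checking that dividing the $\widetilde O_1(\lambda^{2+})$ error by $\lambda^2$ yields $\widetilde O_1(\lambda^{0+})$, one obtains
$$
B_\pm(\lambda)=\lambda^2\big(a\log\lambda+z+\widetilde O_1(\lambda^{0+})\big)\,S_1,\qquad a=\|V\|_1\,c\,a_1,\quad z=\|V\|_1\,c\,z_1+d.
$$

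Finally I would read off invertibility and the asymptotics. Since $a_1\neq0$ (by \eqref{g1 def}) and $c\neq0$, we have $a\in\R\setminus\{0\}$, so the factor $a\log\lambda$ dominates and the scalar coefficient is nonzero for all sufficiently small $\lambda$; thus $B_\pm(\lambda)$ is invertible on $S_1L^2$ with $B_\pm(\lambda)^{-1}=f^\pm(\lambda)S_1$ and $f^+$ exactly as in \eqref{f defn}. Moreover $z\in\mathbb C\setminus\R$ because $\operatorname{Im}z=\|V\|_1\,c\,\operatorname{Im}z_1\neq0$ (as $d$ is real and $z_1\in\mathbb C\setminus\R$), and the conjugation relation $f^+=\overline{f^-}$ is inherited from $M^-(\lambda)=\overline{M^+(\lambda)}$ together with the reality of $S_1$, $P$, and $vG_1v$. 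The one part that genuinely requires care is this bookkeeping of the scalar constants—confirming that $a$ is real and nonzero and that $z$ keeps a nonzero imaginary part—since these facts are exactly what force invertibility and pin down the form of $f^\pm$; the operator reductions above are otherwise routine once $S_1D_0=S_1$ and the rank-one structure are in hand.
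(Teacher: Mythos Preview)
Your argument is correct and follows essentially the same route as the paper's proof: both compute $S_1(M^\pm(\lambda)+S_1)^{-1}S_1$ via \eqref{M plus S} using $S_1D_0=D_0S_1=S_1$, and then exploit the rank-one structure of $S_1$ to reduce $B_\pm(\lambda)$ to a scalar multiple of $S_1$. Your version is somewhat more explicit about the scalar bookkeeping (in particular, why $a\neq 0$ and $\operatorname{Im} z\neq 0$), which the paper leaves implicit.
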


\begin{proof}

Noting that $S_1D_0=D_0S_1=S_1$, we have
\begin{align*}
	S_1[M^{\pm}(\lambda)+S_1]^{-1}S_1
	=S_1-\widetilde g_1^{\pm}(\lambda)S_1PS_1
	-\lambda^2S_1vG_1vS_1
	+ S_1\widetilde O_1(\lambda^{2+})S_1.
\end{align*}
So that (for some $c_1,c_2\in\R, c_1\neq 0$),
\begin{align*}
	B_{\pm}(\lambda)&=
	\widetilde g_1^{\pm}(\lambda)S_1PS_1
	+\lambda^2S_1vG_1vS_1
	+ S_1\widetilde O_1(\lambda^{2+})S_1\\
	&=\big[c_1\widetilde
g_1^{\pm}(\lambda)+c_2\lambda^2+  \widetilde O_1(\lambda^{2+})\big]S_1.
\end{align*}
In the second equality we used the fact that  $S_1$ is of rank one in the case of a resonance of the first kind. 

\end{proof}
 
In
particular we note that for $0<\lambda <\lambda_1$,
\begin{align}\label{f diff} 
	f^+(\lambda)-f^-(\lambda)
	 =\frac{1}{\lambda^2}\bigg(\frac{(a\log \lambda+z)
	-(a\log\lambda +\overline{z})+\widetilde O_1(\lambda^{0+})}
	{(a\log \lambda+z)(a\log\lambda +\overline{z})+\widetilde O_1(\lambda^{0+})}
	\bigg)=\widetilde{O}_1((\lambda \log \lambda)^{-2}).
\end{align} 
We are now ready to use Lemma~\ref{JNlemma} to see

\begin{prop}\label{pwave exp}

	If there is a resonance of the first
	kind at zero, then
	\begin{align*}
		M^{\pm}(\lambda)^{-1} = f^\pm(\lambda)S_1+K
		+\widetilde O_1(1/\log(\lambda)),
	\end{align*}
where $K$ is a $\lambda$ independent absolutely bounded operator.
\end{prop}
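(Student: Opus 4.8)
The plan is to invert $M^\pm(\lambda)$ by assembling the three ingredients already in hand---the expansions \eqref{M plus S}--\eqref{M plus Sv2} for $(M^\pm(\lambda)+S_1)^{-1}$, the formula $B_\pm(\lambda)^{-1}=f^\pm(\lambda)S_1$ from \eqref{Binv def}, and the resolvent identity of Lemma~\ref{JNlemma} applied with $A=M^\pm(\lambda)$ and $S=S_1$. Because $B_\pm(\lambda)^{-1}=f^\pm(\lambda)S_1$ and $S_1^2=S_1$, the formula of Lemma~\ref{JNlemma} collapses to
\begin{align*}
M^\pm(\lambda)^{-1}=(M^\pm(\lambda)+S_1)^{-1}+f^\pm(\lambda)\,(M^\pm(\lambda)+S_1)^{-1}S_1(M^\pm(\lambda)+S_1)^{-1}.
\end{align*}
The first summand is immediate: by \eqref{M plus Sv2} it equals $D_0+\widetilde O_1(\lambda^{2-})=D_0+\widetilde O_1(1/\log\lambda)$, so it contributes the $\lambda$-independent absolutely bounded operator $D_0$ (absolutely bounded by Lemma~\ref{d0bounded}) to $K$, plus an acceptable error.

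The content is in the second summand. Using $S_1D_0=D_0S_1=S_1$ together with the refined expansion \eqref{M plus S}, I would write
\begin{align*}
(M^\pm(\lambda)+S_1)^{-1}S_1=S_1-\widetilde g_1(\lambda)D_0PS_1-\lambda^2 D_0vG_1vS_1+\widetilde O_1(\lambda^{2+}),
\end{align*}
and the mirror identity for $S_1(M^\pm(\lambda)+S_1)^{-1}$. Since $(M^\pm+S_1)^{-1}S_1(M^\pm+S_1)^{-1}=\big((M^\pm+S_1)^{-1}S_1\big)\big(S_1(M^\pm+S_1)^{-1}\big)$, multiplying the two expansions and collapsing the $S_1$ factors gives $S_1+A(\lambda)+B(\lambda)+A(\lambda)B(\lambda)$, where $A$ and $B$ gather the $\widetilde g_1$, $\lambda^2$, and $\widetilde O_1(\lambda^{2+})$ corrections on the two sides. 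The leading term $f^\pm(\lambda)S_1$ is exactly the claimed singular rank-one part.

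The delicate point, and the reason \eqref{M plus S} rather than the cruder \eqref{M plus Sv2} is needed here, is the interaction of the singular prefactor $f^\pm(\lambda)\sim(\lambda^2\log\lambda)^{-1}$ with each correction. The dangerous term is $\widetilde g_1(\lambda)=\|V\|_1\lambda^2(a_1\log\lambda+z_1)$: a direct computation gives
\begin{align*}
f^\pm(\lambda)\widetilde g_1(\lambda)=\|V\|_1\frac{a_1\log\lambda+z_1}{a\log\lambda+z+\widetilde O_1(\lambda^{0+})}=\frac{a_1\|V\|_1}{a}+\widetilde O_1\big(1/\log\lambda\big),
\end{align*}
so that $f^\pm A$ and $f^\pm B$ each deposit a $\lambda$-independent, absolutely bounded multiple of $D_0PS_1$, respectively $S_1PD_0$, into $K$. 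By contrast $f^\pm(\lambda)\lambda^2=\widetilde O_1(1/\log\lambda)$, $f^\pm(\lambda)\widetilde O_1(\lambda^{2+})=\widetilde O_1(1/\log\lambda)$, and $f^\pm(\lambda)A(\lambda)B(\lambda)=\widetilde O_1(\lambda^{2-})$ are all absorbed into the error. Had one only used $\widetilde O_1(\lambda^{2-})$ from \eqref{M plus Sv2}, the product with $f^\pm$ would be of size $\lambda^{0-}/\log\lambda$, which is unbounded as $\lambda\to0$; it is precisely the sharper $\widetilde O_1(\lambda^{2+})$ remainder in \eqref{M plus S} that saves the estimate.

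Collecting terms, I would set $K:=D_0-\tfrac{a_1\|V\|_1}{a}\big(D_0PS_1+S_1PD_0\big)$, which is $\lambda$-independent and absolutely bounded as a finite sum of products of the absolutely bounded operators $D_0$, the rank-one $P$, and the finite-rank $S_1$, and read off the stated expansion $M^\pm(\lambda)^{-1}=f^\pm(\lambda)S_1+K+\widetilde O_1(1/\log\lambda)$ (the precise value of the constant is inessential). I expect the only genuinely delicate step to be the bookkeeping of the derivative bounds hidden in the $\widetilde O_1$ notation when dividing by the slowly varying factor $a\log\lambda+z$: verifying, for instance, that one differentiation of $f^\pm\widetilde g_1$ produces an $O(\lambda^{-1}(\log\lambda)^{-2})$ term consistent with $\widetilde O_1(1/\log\lambda)$ is the computation requiring the most care, but it is routine given the explicit structure of $f^\pm$ in \eqref{f defn}.
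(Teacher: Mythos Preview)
Your proposal is correct and follows essentially the same route as the paper: invoke Lemma~\ref{JNlemma} with $B_\pm^{-1}=f^\pm S_1$, handle the first summand via \eqref{M plus Sv2}, expand $(M^\pm+S_1)^{-1}S_1$ and its adjoint via \eqref{M plus S}, and then observe that $f^\pm\widetilde g_1=\text{const}+\widetilde O_1(1/\log\lambda)$ while $f^\pm\lambda^2$ and $f^\pm\widetilde O_1(\lambda^{2+})$ are absorbed into the error. Your constant $a_1\|V\|_1/a$ is the paper's $1/c_1$, and your remark on why the sharper remainder in \eqref{M plus S} is essential is exactly the point.
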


\begin{proof}

	We note by Lemma~\ref{JNlemma} and \eqref{Binv def}  we have
	\begin{align*}
	M^{\pm}(\lambda)^{-1}& =(M^\pm(\lambda)+S_1)^{-1}
	+(M^\pm(\lambda)+S_1)^{-1}S_1B_{\pm}(\lambda)^{-1}S_1
	(M^\pm(\lambda)+S_1)^{-1}\\
	& =(M^\pm(\lambda)+S_1)^{-1}
	+f^\pm(\lambda) (M^\pm(\lambda)+S_1)^{-1} S_1
	(M^\pm(\lambda)+S_1)^{-1}.
	\end{align*}

	 The representation \eqref{M plus Sv2} in Lemma~\ref{M+S1inverse}   takes care of the first summand.  Using \eqref{M plus S}, and $S_1D_0=D_0S_1=S_1$, we have
	\begin{align*}
		(M^\pm(\lambda)+S_1)^{-1}S_1
		&=S_1
		-\widetilde g_1^{\pm}(\lambda)D_0PS_1
		-\lambda^2 D_0vG_1vS_1
		+\widetilde O_1(\lambda^{2+}),\\
		S_1(M^\pm(\lambda)+S_1)^{-1}
		&=S_1
		-\widetilde g_1^{\pm}(\lambda)S_1PD_0
		-\lambda^2 S_1vG_1vD_0
		+\widetilde O_1(\lambda^{2+}).
	\end{align*}
	When an error term of size $\widetilde{O}_1(\lambda^{2+})$ interacts with $f^\pm(\lambda)$, the product satisfies $\widetilde O_1(\lambda^{2+})f^\pm(\lambda) = \widetilde O_1(\lambda^{0+})$, which is stronger than $\widetilde O_1(1/\log ((\lambda))$.  Therefore, it suffices to prove that 
	$$-\widetilde g_1^{\pm}(\lambda)f^{\pm}(\lambda)
		[D_0PS_1+S_1PD_0]
		-\lambda^2 f^{\pm}(\lambda)[D_0vG_1vS_1+
		S_1vG_1vD_0]$$
	equals to a $\lambda$ independent operator plus an error term of size $\widetilde O_1((\log\lambda)^{-1})$. This follows from the following calculations
$$
	\widetilde g_1^{\pm}(\lambda) f^{\pm}(\lambda)
	=\frac{\widetilde g_1^{\pm}(\lambda)}{c_1\widetilde g_1^{\pm}(\lambda)+c_2\lambda^2+\widetilde O_1
	(\lambda^{2+})}=\frac{1}{c_1}+ \widetilde O_1((\log\lambda)^{-1}), \text{ and}
$$ 
$$
	\lambda^2 f^{\pm}(\lambda)=\frac{1}{a\log \lambda
	+z^{\pm}+\widetilde O_1(\lambda^{0+})}=
	\widetilde O_1((\log\lambda)^{-1}).
$$

\end{proof}

Here we consider the   contribution of the most singular
$f^{\pm}(\lambda)S_1$ term  in 
Proposition~\ref{pwave exp}.

\begin{lemma}\label{lem:ugly}

	For each $x,y\in \R^4$ we have the identity
\begin{multline*}
	[f^+(\lambda)R_0^+VR_0^+vS_1vR_0^+VR_0^+
	-f^-(\lambda)R_0^-VR_0^-vS_1vR_0^-VR_0^-](x,y)\\
	=(f^+(\lambda)-f^-(\lambda))[G_0VG_0vS_1vG_0VG_0](x,y)+ L_{x,y}(\lambda)
\end{multline*}
	with 
\begin{align}\label{Lxy bound}
	\sup_{x,y\in \R^4}
	\bigg|\int_0^\infty e^{it\lambda^2}\lambda \chi(\lambda) L_{x,y}(\lambda)\, d\lambda \bigg|\les t^{-1}, 
	\qquad t>2.
\end{align}

\end{lemma}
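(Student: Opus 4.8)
The plan is to isolate the most singular part of each propagator by writing $R_0^\pm(\lambda^2) = G_0 + (R_0^\pm(\lambda^2) - G_0)$ in all four resolvent factors of
$$
	A^\pm(\lambda) := R_0^\pm V R_0^\pm v S_1 v R_0^\pm V R_0^\pm ,
$$
and peeling off the term in which every resolvent has been replaced by $G_0$. Writing $A_0 := G_0 V G_0 v S_1 v G_0 V G_0$, linearity gives the (purely algebraic) identity
$$
	f^+ A^+ - f^- A^- = (f^+ - f^-)A_0 + \big[\, f^+(A^+ - A_0) - f^-(A^- - A_0)\,\big],
$$
so that $L_{x,y}(\lambda)$ is exactly the bracketed remainder; the first summand is the stated main term and is handled elsewhere. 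Telescoping $A^\pm - A_0$ (replacing one resolvent at a time) writes it as a finite sum of terms each carrying at least one factor $(R_0^\pm(\lambda^2) - G_0)$, and the task reduces to showing that the time integral in \eqref{Lxy bound} of each such term, weighted by $f^\pm(\lambda)$, is $O(t^{-1})$ uniformly in $x,y$.

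For the analytic bounds I would split according to $\lambda|x-y|$ as in the proof of Lemma~\ref{lem:M_exp}, using the expansion \eqref{resolv expansion2} on $\lambda|x-y|\ll 1$ and \eqref{R0high} on $\lambda|x-y|\gtrsim 1$. The correction $R_0^\pm(\lambda^2) - G_0 \approx g_1^\pm(\lambda) + \lambda^2 G_1$ splits into a scalar piece and an operator piece. The operator piece $\lambda^2 G_1$, together with all higher-order errors, is $\pm$-independent and gains genuine powers of $\lambda$: since $|f^\pm(\lambda)| \les \lambda^{-2}/|\log\lambda|$, multiplying $f^\pm$ by a correction of size $\widetilde O_1(\lambda^2)$ produces $\widetilde O_1((\log\lambda)^{-1})$, which vanishes as $\lambda\to 0$, and cross terms with two or more correction factors are smaller still. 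Uniform-in-$(x,y)$ control comes from the rank-one structure $S_1 = \phi\langle \phi, \cdot\rangle$, which factors $A^\pm$ as a product of a function of $x$ and a function of $y$, combined with Lemma~\ref{lem:locL2} (the outer factors $R_0^\pm V R_0^\pm$ have kernels in $L^{2,-\sigma}_y$ uniformly in $x$) and the integral estimates of Lemmas~\ref{EG:Lem} and~\ref{bracket decay}, which absorb the logarithmic growth of $G_1$ against the decay of $V$ and $v$; here $\beta>4$ is used. The $\widetilde O_1$ calculus propagates all these bounds to one $\lambda$-derivative.

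The one term that does \emph{not} decay after multiplication by $f^\pm$ is the scalar correction $g_1^\pm(\lambda)\sim \lambda^2\log\lambda$: since $f^\pm g_1^\pm$ tends to a nonzero constant, the quantities $f^+(A^+-A_0)$ and $f^-(A^--A_0)$ each carry a non-vanishing $O(1)$ contribution. This is the main obstacle, and the reason the statement is a difference of the $+$ and $-$ expressions rather than a bound on either one. I would keep the $\pm$ pair together and exploit the conjugation symmetry $f^- = \overline{f^+}$, $g_1^- = \overline{g_1^+}$ from \eqref{g1 def} and \eqref{f defn}: because $z\notin\R$, a direct computation gives
$$
	f^+(\lambda)g_1^+(\lambda) - f^-(\lambda)g_1^-(\lambda) = 2i\,\Im\big(f^+ g_1^+\big) = \widetilde O_1\big((\log\lambda)^{-1}\big),
$$
so this contribution also vanishes as $\lambda\to0$. (The same cancellation mechanism, together with \eqref{f diff}, governs the genuinely singular main term, but that is not needed here.)

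Collecting all contributions, I would conclude that $L_{x,y}(\lambda) = \widetilde O_1(\eta(\lambda))$ uniformly in $x,y$, where $\eta(\lambda)$ is one of $(\log\lambda)^{-1}$, $(\log\lambda)^{-2}$, $\lambda^2/\log\lambda,\dots$, each vanishing at $\lambda=0$ and with integrable derivative near $0$. Finally, a single integration by parts in \eqref{Lxy bound} using $\lambda e^{it\lambda^2} = (2it)^{-1}\partial_\lambda e^{it\lambda^2}$ transfers the smoothness onto the time variable: the boundary terms vanish since $L_{x,y}(0^+)=0$ and $\chi$ is compactly supported, and the remaining integral $\int_0^{\lambda_1}|\partial_\lambda(\chi L_{x,y})|\,d\lambda$ converges, giving the claimed $O(t^{-1})$ bound uniformly in $x,y$.
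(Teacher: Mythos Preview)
Your route differs from the paper's, and the difference matters. The paper does not write $f^+A^+-f^-A^- = (f^+-f^-)A_0 + [f^+(A^+-A_0)-f^-(A^--A_0)]$ and then telescope $A^\pm-A_0$. Instead it applies the identity \eqref{alg fact} directly to the five-factor product $f^\pm R_0^\pm VR_0^\pm vS_1vR_0^\pm VR_0^\pm$, producing terms in which \emph{exactly one} factor is a $\pm$-difference. This yields two families: terms with prefactor $(f^+-f^-)$, which is already $\widetilde O_1((\lambda\log\lambda)^{-2})$ by \eqref{f diff}; and terms with prefactor $f^\pm$ multiplying a single $R_0^+-R_0^-$. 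Only then, in the first family, are the remaining $R_0^\pm$'s replaced by $G_0$ to extract the main term. In neither family does the $O(1)$ obstruction $f^\pm g_1^\pm$ ever appear, so the paper never needs your cancellation $f^+g_1^+-f^-g_1^-$.

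Your scheme creates and then attempts to cancel that obstruction, and there is a gap in the cancellation step. After telescoping $A^\pm-A_0$ ``one resolvent at a time,'' a typical term looks like $(R_0^\pm-G_0)VR_0^\pm vS_1vR_0^\pm VR_0^\pm$: the undisturbed factors are still $R_0^\pm$, not $G_0$. When you pair the $+$ and $-$ versions and isolate the scalar piece $g_1^\pm$, the accompanying operators on the two sides are different, so $f^+g_1^+-f^-g_1^-$ is not the coefficient of a single $\pm$-independent operator. To make your cancellation work you would need either the full $2^4$-term binary expansion of $A^\pm$ (so that the one-correction terms carry three $\pm$-independent $G_0$'s), or to iterate the decomposition another level; the phrase ``cross terms with two or more correction factors'' hints at the former, but ``replacing one resolvent at a time'' does not give it.

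A second issue: your conclusion that $L_{x,y}(\lambda)=\widetilde O_1(\eta(\lambda))$ \emph{uniformly} in $x,y$ is stronger than what the paper proves and not obviously true. The paper keeps the combination $\chi(\lambda r)\lambda^2\log(\lambda r)$ intact rather than splitting into $g_1^\pm+\lambda^2 G_1$, integrates by parts first, and obtains a bound $t^{-1}(1+\log^-(r))$ that still depends on an interior spatial variable; uniformity in the outer $(x,y)$ comes only after integrating this against $V$. Your split produces an isolated $G_1(z,y)=c\log|z-y|$, which grows at large $|z-y|$. The cutoff $\chi(\lambda r)$ does ultimately compensate (it restricts $\lambda\les 1/r$, and the $\lambda$-integral over that reduced range recovers boundedness), but this interplay between $\lambda$ and the spatial variables is not captured by the $\widetilde O_1$ calculus alone and has to be argued explicitly, as the paper does.
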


Before proving this lemma, we note that the most singular term of the 
expansion takes the form
$$
	[f^+(\lambda)-f^-(\lambda)] K_1 .
$$
where $K_1=G_0VG_0vS_1vG_0VG_0$ is a rank one operator. 
The contribution of this to the Stone's formula, \eqref{Stone}, gives us the operator $F_t$ in Theorem~\ref{thm:res1}:
$$
F_t:= K_1 \int_0^\infty e^{it\lambda^2}\lambda \chi(\lambda) [f^+(\lambda)-f^-(\lambda)] d\lambda 
=  O(1/\log(t)) \, K_1,
$$
where we used Lemma~\ref{log decay} in the last equality. 
The desired $L^1\to L^\infty$ bound follows from this
and the observation that
\begin{align}\label{eq:L2 business}
	\sup_{x,y\in \R^4} \| G_0VG_0v(x,z_1)\|_{L^2_{z_1}}
	\||S_1|\|_{L^2\to L^2} \|vG_0VG_0(z_2,y)\|_{L^2_{z_1}}
	<\infty.
\end{align}
Here we used Lemmas~\ref{EG:Lem} and \ref{bracket decay}
and the fact that $S_1$ is absolutely
bounded.

\begin{proof}
Consider
\begin{align}\label{diff terms}
	f^+(\lambda)R_0^+VR_0^+vS_1vR_0^+VR_0^+
	-f^-(\lambda)R_0^-VR_0^-vS_1vR_0^-VR_0^-.
\end{align}
Using the algebraic fact,
\begin{align}\label{alg fact}
	\prod_{k=0}^MA_k^+-\prod_{k=0}^M A_k^-
	=\sum_{\ell=0}^M \bigg(\prod_{k=0}^{\ell-1}A_k^-\bigg)
	\big(A_\ell^+-A_\ell^-\big)\bigg(
	\prod_{k=\ell+1}^M A_k^+\bigg),
\end{align}
rewrite \eqref{diff terms} as
\begin{align} 
\label{dt1}  &\big[f^+(\lambda)-f^-(\lambda)\big]R_0^\pm VR_0^\pm vS_1vR_0^\pm VR_0^\pm  \\
\label{dt2} &+ 
f^\pm(\lambda) [R_0^+-R_0^-] VR_0^\pm vS_1vR_0^\pm VR_0^\pm \\
\label{dt3} &+ f^\pm(\lambda)  R_0^\pm V [R_0^+-R_0^-] vS_1vR_0^\pm VR_0^\pm \\
&+\text{ similar terms}. \nonumber
\end{align}

We further write
\begin{multline}
\label{dt12} 
\eqref{dt1} = \big(f^+(\lambda)-f^-(\lambda)\big) \big[ G_0 VG_0 vS_1vG_0VG_0\\+ G_0VG_0 vS_1v G_0V(R_0^\pm-G_0) 
+G_0VG_0 vS_1v(R_0^\pm-G_0) VR_0^\pm
\\ +    G_0V(R_0^\pm-G_0) vS_1vR_0^\pm VR_0^\pm 
+(R_0^\pm-G_0) VR_0^\pm vS_1vR_0^\pm VR_0^\pm\big]
\end{multline}
The first line corresponds to the operator $K_1$, which we've seen is rank one and its contribution to the
Stone formula decays like $1/\log t$.  We now show that the remaining terms along with \eqref{dt2} and \eqref{dt3}, denoted $L_{x,y}(\lambda)$ in the
statement of the Lemma obey the bound \eqref{Lxy bound}.

Now consider the contribution of the second most singular term in \eqref{dt12}:
$$
\int_0^\infty e^{it\lambda^2}\lambda \chi(\lambda) [f^+(\lambda)-f^-(\lambda)] G_0VG_0 vS_1v G_0V(R_0^\pm-G_0)  d\lambda.
$$
We need to use the representation (with $r=|x-y|$) which follows from \eqref{J0 def}, \eqref{Y0 def2}, and \eqref{JYasymp2}:  
$$
(R_0^\pm(\lambda^2)-G_0)(x,y)=\chi(\lambda r) \big[ c \lambda^2\log(\lambda r)+\widetilde O_1(\lambda^4r^2\log(\lambda r))\big]+\widetilde O_1(\lambda^2).
$$

Using \eqref{f diff}, we need to estimate the integral
$$
\int_0^\infty e^{it\lambda^2}\lambda \chi(\lambda) \Big[\frac{\chi(\lambda r) \big[ c \lambda^2\log(\lambda r)+\widetilde O_1(\lambda^4r^2\log(\lambda r))\big]+\widetilde O_1(\lambda^2)}{\lambda^2|a\log(\lambda)+z|^2+ \widetilde O_1(\lambda^{2+})} +\widetilde O_1(\lambda^{1-})\Big]    d\lambda.
$$
We define the function $\log^- (y):= |\log y| \chi_{(0,1)}(y)$.
Integrating by parts we bound this integral by (ignoring the terms when the derivative hits the cutoff functions)
\begin{multline*}
\frac{1}{t} \int_0^\infty \chi(\lambda) \Big[\frac{\chi(\lambda r)|\log(\lambda r)|}{\lambda |a\log(\lambda)+z|^3 } + \frac{ \chi(\lambda r) \lambda r^2|\log(\lambda r)|}{|a\log(\lambda)+z|^2} + \frac{1}{\lambda |a\log(\lambda)+z|^2}+\lambda^{0-}\Big]    d\lambda
\\ \les \frac{1}{t} [1+\log^-(r)].
\end{multline*}
To obtain the last inequality note that the last two summands are clearly integrable. The second summand can be estimated by noting that the denominator is bounded away from zero and then  changing the variable $\lambda r \to \lambda$. Finally, the first summand can be estimated by using the inequality 
$$\chi(\lambda) \chi(\lambda r)|\log(\lambda r)|\les 1+|\log(\lambda)|+\log^-(r).$$
This yields the required inequality asserted in Theorem~\ref{thm:res1} by noting that
$$ \sup_{x\in \R^4}
v(y)G_0V(1+\log^-(|\cdot -x|))\in L^2_y(\R^4).$$
and employing an analysis as in \eqref{eq:L2 business}.

The contribution of the remaining terms in \eqref{dt12} can be estimated by writing $R_0=(R_0-G_0)+G_0$. The contribution of $G_0$ terms is similar to the one above. The contribution of the terms with at least two factors of $R_0-G_0$ can be obtained by using the bound $R_0-G_0=\widetilde O_1(\lambda^{2-})$.

The contribution of \eqref{dt2} (and   \eqref{dt3}) can be estimated similarly. It suffices to study the case when one replaces $R_0$'s with $G_0$'s. The bound for the low energy part of 
$R_0^+-R^-_0$ is similar to the one above. For the high energy part, the bound $\widetilde O_1(\lambda^2)$ no longer suffices. Instead using the asymptotics of $R_0$ for large energies, we have the $\lambda$ integral
$$ 
\int_0^\infty e^{it\lambda^2}\lambda \chi(\lambda) f^\pm(\lambda) \widetilde{\chi}(\lambda r) e^{i\lambda r} \frac{\lambda}{r} \omega^{+}(\lambda r)     d\lambda. 
$$
Here $\widetilde \chi=1-\chi$ is a cut-off away from zero.
After an integration by parts and by ignoring the logarithmic terms in the denominator, we bound this integral by
$$
\frac{1}{t} \int_{1/r}^1 \big(\lambda^{-5/2} r^{-3/2}+ \lambda^{-3/2} r^{-1/2}\big) \les \frac{1}{t}.
$$
Where we use that, on the support of 
$\widetilde \chi(\lambda r)\chi(\lambda)$ we have that
$r\gtrsim 1$, in the last inequality.

\end{proof}

\begin{proof}[Proof of Theorem~\ref{thm:res1}]

The proof follows from Proposition~\ref{pwave exp}, Lemma~\ref{lem:ugly}, the discussion of the contribution
of the operator $K_1$ to the Stone formula following
Lemma~\ref{lem:ugly} and the following observations.
The contribution of the other terms in Proposition~\ref{pwave exp} can be bounded as in Lemma~\ref{lem:ugly} noting
that both the $\lambda$ independent operator $K$ and the error term $\widetilde O_1(1/\log \lambda)$ are much smaller than $f^{\pm}$ and $f^+-f^-$.   

For completeness, we now consider the contribution of
the finite Born series terms,
\eqref{bs finite}, to the Stone formula, \eqref{Stone}.
We will only obtain the decay rate $t^{-1}$ although it 
is possible to prove that these terms decay like $t^{-2}$. 
To show the dispersive nature of the terms of 
\eqref{bs finite}, we note that the first term is the free
resolvent and clearly disperses.  For the other terms, we 
take advantage of the cancellation between the `+' and `-'
terms.     Accordingly,
we consider the contribution of the second term of
\eqref{bs finite} to the \eqref{Stone},
\begin{align*}
	\int_0^\infty e^{it\lambda^2}\lambda \chi(\lambda)
	[R_0^+(\lambda^2)(x,z)V(z)R_0^+(\lambda^2)(z,y)
	-R_0^-(\lambda^2)(x,z)V(z)R_0^-(\lambda^2)(z,y)]
	\, d\lambda.
\end{align*}
Using that $R_0^\pm=G_0+\widetilde O_1(\lambda^{2-})$, we can rewrite the integral above as
\begin{align*}
	\int_0^\infty e^{it\lambda^2}\lambda \chi(\lambda) [G_0V\widetilde O_1(\lambda^{2-})+\widetilde O_1(\lambda^{2-})VG_0 + \widetilde O_1(\lambda^{2-}) V \widetilde O_1(\lambda^{2-})] d\lambda.
\end{align*}
It is easy to see that this integral is $O(1/t)$ by an integration by parts.
The contribution of the third term in the Born series is similar.  We
note that by Lemma~\ref{EG:Lem}
$$
	\sup_{x,y\in \R^4} \int_{\R^4}
	[1+G_0(x,z)+G_0(z,y)]V(z) \, dz <\infty
$$
 which closes the argument.

\end{proof}

\section{Resonance of the second kind}\label{sec:second}
In this section we prove Theorem~\ref{thm:main} in the case of a resonance of the second 
kind, that is when $S_1\neq 0$, and $S_1-S_2=0$.   
In particular,
we prove

\begin{theorem}\label{thm:res2}

	Suppose that $|V(x)|\les \la x\ra^{-8-}$.
	If there is a resonance of the second kind at zero,
	then 
	$$
		\|e^{itH}\chi(H) P_{ac}(H)\|_{L^1\to L^\infty} \les
		t^{-1} ,
		\qquad t>2.
	$$

\end{theorem}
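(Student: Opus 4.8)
The plan is to establish the improved $t^{-1}$ decay with no leading term by showing that, in the case of a resonance of the second kind, the most singular part of $M^{\pm}(\lambda)^{-1}$ is no longer of order $(\lambda^2\log\lambda)^{-1}$ but genuinely milder, so that the troublesome $1/\log t$ contribution to the Stone formula \eqref{Stone} disappears entirely. The key structural difference from Section~\ref{sec:first} is that here $S_2=S_1$, which means $T_1=S_1PS_1$ vanishes on all of $S_1L^2$. I would first record this: since $T_1 = S_1PS_1 = 0$, the operator $P$ annihilates the resonance subspace, i.e. $S_1 v$ is orthogonal to $v$, or equivalently $\la v, \psi\ra = 0$ where $\psi$ spans the range of $S_1$. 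This orthogonality is exactly what kills the coefficient $\widetilde g_1^\pm(\lambda)$ in the expansion of $B_\pm(\lambda)$, so that the leading behavior of $B_\pm(\lambda)$ on $S_1 L^2$ is now governed by the $\lambda^2 S_1 vG_1 v S_1$ term rather than the $\widetilde g_1^\pm(\lambda) S_1 P S_1$ term.

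Next I would compute $B_\pm(\lambda)=S_1-S_1(M^\pm(\lambda)+S_1)^{-1}S_1$ using the refined inverse from Lemma~\ref{M+S1inverse}, carried one order further than in Section~\ref{sec:first} (this is why the hypothesis is strengthened to $\beta>8$, so the expansion \eqref{Mexp2} and its associated $\widetilde O_1$ error bounds are available). With $S_1 P S_1 = 0$, the surviving leading term is $\lambda^2 S_1 v G_1 v S_1$ together with the $g_2^\pm(\lambda)$ and $\lambda^4$ contributions. The crucial point to verify is that $S_1 v G_1 v S_1$ is invertible on $S_1 L^2$ in the second-kind case; this is the analogue of $T_1$ being invertible for the first kind, and it is precisely the condition characterizing a genuine eigenvalue (as opposed to resonance) at zero. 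Granting this, $B_\pm(\lambda)^{-1}$ will have leading order $\lambda^{-2}$ times a $\lambda$-independent operator, \emph{without} the $\log\lambda$ factor that appeared in \eqref{f defn}. Consequently the full inverse $M^\pm(\lambda)^{-1}$ will have its most singular piece of the form $\lambda^{-2}$ times an absolutely bounded, $\lambda$-independent operator, plus milder error terms.

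The heart of the argument is then to show that this $\lambda^{-2}$ singularity, once fed through the symmetric resolvent identity \eqref{res_exp} and the Stone formula, produces a clean $t^{-1}$ bound with no leading operator $F_t$. The mechanism is the cancellation between the $+$ and $-$ boundary values: as in Lemma~\ref{lem:ugly}, I would expand the difference of the two resolvent chains using the algebraic identity \eqref{alg fact}, isolating the contribution of the leading $\lambda^{-2}$ term of $M^\pm(\lambda)^{-1}$. Because this leading term is now \emph{the same} for $+$ and $-$ (it is $\lambda$-independent and real after the $\lambda^{-2}$ prefactor, unlike the complex $f^+\neq f^-$ of the first kind), its contribution enters only through differences of the \emph{outer} free resolvents $R_0^+ - R_0^-$ and through the subleading corrections, each of which carries extra powers of $\lambda$. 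I would then estimate $\int_0^\infty e^{it\lambda^2}\lambda\chi(\lambda)\,[\,\cdot\,]\,d\lambda$ by the same integration-by-parts scheme as in Lemma~\ref{lem:ugly}, tracking the $\log^-(r)$ weights and verifying they are absorbed into the $L^2$ bounds of the $G_j V$ factors via Lemmas~\ref{EG:Lem} and~\ref{bracket decay}, exactly as in \eqref{eq:L2 business}.

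The main obstacle I anticipate is the bookkeeping of the \emph{low-energy expansion of $M^\pm(\lambda)^{-1}$ to sufficient order} so that after cancellation every surviving term is integrable against $e^{it\lambda^2}\lambda\chi(\lambda)$ with a gain of $t^{-1}$. Specifically, because the naive singularity is $\lambda^{-2}$ (one order worse than the $(\lambda\log\lambda)^{-2}$ that Lemma~\ref{log decay} was tuned to handle), I must show that the $\lambda^{-2}$ piece contributes \emph{zero} leading term rather than a constant or logarithmically growing one — this is where the orthogonality $\la v,\psi\ra=0$ must be used a second time, to force the relevant operator $vR_0^\pm(\lambda^2)v$ applied to the resonance function to start at order $\lambda^2$, thereby neutralizing the $\lambda^{-2}$ from $B_\pm^{-1}$. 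Verifying this double cancellation cleanly, and confirming that the remaining terms all decay like $t^{-1}$ after integration by parts (with the spatial weights controlled uniformly in $x,y$), is the technical crux; the decay rate itself is then immediate since every term, after cancellation, behaves no worse than the Born series terms already handled at the end of the proof of Theorem~\ref{thm:res1}.
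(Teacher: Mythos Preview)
Your overall strategy is correct and matches the paper's approach closely: you correctly identify that $S_1=S_2$ forces $S_1PS_1=0$, so the leading term of $B_\pm(\lambda)$ becomes $\lambda^2 S_1vG_1vS_1$, invertible on $S_1L^2$ by Lemma~\ref{vG1v kernel}, and hence $M^\pm(\lambda)^{-1}$ has leading singularity $\lambda^{-2}D_2$ with $D_2=(S_1vG_1vS_1)^{-1}$ real and $\lambda$-independent. Your third paragraph correctly names both cancellation mechanisms in the $+/-$ decomposition via \eqref{alg fact}.

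However, your fourth paragraph misidentifies the technical crux. No second use of the orthogonality $\la v,\psi\ra=0$ on the outer resolvent factors is needed, and the claim that $vR_0^\pm(\lambda^2)v$ applied to a zero-energy eigenfunction must be shown to start at order $\lambda^2$ is not what drives the argument (nor is it stated or used anywhere in the paper). The mechanism you already described in paragraph three is sufficient and complete: when the difference in \eqref{alg fact} lands on $M^{-1}$, the paper simply computes $M^+(\lambda)^{-1}-M^-(\lambda)^{-1}=cK_1+\widetilde O_1(\lambda^{0+})$ using that $D_2/\lambda^2$ is real and $g_2^+-g_2^-=c\lambda^4$; when it lands on an outer $R_0$, one uses $R_0^+-R_0^-=c\lambda^2+\widetilde O_1(\lambda^4r^2)$ for $\lambda r\les 1$, so the product with $D_2/\lambda^2$ is again $\widetilde O_1(1)$. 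The surviving $\lambda$-independent pieces such as $cVG_0vD_2vG_0VG_0$ pose no problem at all: $\int_0^\infty e^{it\lambda^2}\lambda\chi(\lambda)\,d\lambda=O(t^{-1})$ directly by one integration by parts, since the boundary term vanishes and $\chi'$ is compactly supported. There is no leading operator to subtract off and no $1/\log t$ residue, which is precisely the point of the second-kind case. The only genuine care required is for the high-$\lambda r$ portion of $R_0^+-R_0^-$ and for the remainder $\widetilde O_1(\lambda^4r^2)/\lambda^2$, both of which the paper handles with the large-argument asymptotics \eqref{JYasymp2} and the support condition $\lambda\gtrsim 1/r$; your sketch should replace the ``double cancellation'' worry with these two routine estimates.
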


Despite the fact that the spectral measure is more
singular as $\lambda \to0$ in this case,
the analysis is somehow simpler than when there is a resonance of the first kind
at zero.
 
To understand the expansion for
$M^{\pm}(\lambda)^{-1}$ in this case we need  more terms in the expansion of $(M^{\pm}(\lambda)+S_1)^{-1}$ than was provided Lemma~\ref{M+S1inverse}.
From Lemma~\ref{lem:M_exp}, specifically 
\eqref{M2 exp}, we have by a Neumann series expansion
\begin{align}
	(M^{\pm}(\lambda)&+S_1)^{-1}\nn \\
	&=D_0[\mathbbm 1
	+\widetilde g_1^{\pm}(\lambda)PD_0+
	\lambda^2 vG_1vD_0+g_2^{\pm}(\lambda)vG_2vD_0+\lambda^4
	vG_3vD_0+M_2^{\pm}(\lambda)D_0]^{-1}\nn\\
	&=D_0-\widetilde g_1^{\pm}(\lambda) D_0PD_0-\lambda^2
	D_0vG_1vD_0+(\widetilde g_1^{\pm}(\lambda))^2 D_0PD_0PD_0
	\label{MS eval}\\
	&+\lambda^2 \widetilde g_1^{\pm}(\lambda)[D_0PD_0vG_1vD_0
	+D_0vG_1vD_0PD_0]-g_2^{\pm}(\lambda)D_0vG_2vD_0\nn\\
	&-\lambda^4	D_0vG_3vD_0+D_0E_2^{\pm}(\lambda)D_0\nn
\end{align}
with $E_2^{\pm}(\lambda)=\widetilde O_1(\lambda^{4+})$. 

In the case of a resonance of the second kind, we recall
that $S_1=S_2$.  
By  Lemma~\ref{vG1v kernel} below the operator $S_1vG_1vS_1$ is invertible on $S_1L^2$ (which is $S_2L^2$ in this case). We define  $D_2=(S_1vG_1vS_1)^{-1}$ as an operator on
$S_2L^2(\R^4)$.  Noting that $D_2=S_1D_2S_1$, the operator
is absolutely bounded.

\begin{prop}\label{prop:Minv2}

	If there is a resonance of the second kind at zero,
	then
\begin{equation} \label{Minv2}
		M^{\pm}(\lambda)^{-1}=-\frac{D_2}{\lambda^2}
		+\frac{g_2^{\pm}(\lambda)}{\lambda^4}	K_1+K_2+\widetilde O_1(\lambda^{0+})
\end{equation}
	where $K_1, K_2$ are $\lambda$ independent absolutely
	bounded operators.

\end{prop}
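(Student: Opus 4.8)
The plan is to invert $M^{\pm}(\lambda)$ on $L^2$ through the Jensen--Nenciu identity of Lemma~\ref{JNlemma}, applied with $A=M^{\pm}(\lambda)$ and $S=S_1$, exactly as set up in \eqref{B defn}. The decisive simplification of the second-kind case comes from one algebraic observation: since $S_2=S_1$ (Definition~\ref{resondef}), the operator $T_1=S_1PS_1$ vanishes on $S_1L^2$. Because $P=\|V\|_1^{-1}v\la v,\cdot\ra$ is rank one (see \eqref{TandP}), the identity $S_1PS_1=\|V\|_1^{-1}(S_1v)\la S_1v,\cdot\ra=0$ forces $S_1v=0$, and by self-adjointness of $S_1$ and $v$ also $vS_1=0$, hence $PS_1=S_1P=0$. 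First I would record these identities, since every cancellation below is an instance of them.

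Next I would compute $B_{\pm}(\lambda)=S_1-S_1(M^{\pm}(\lambda)+S_1)^{-1}S_1$ by inserting the refined expansion \eqref{MS eval} and sandwiching with $S_1$, using $S_1D_0=D_0S_1=S_1$. The terms carrying $\widetilde g_1^{\pm}(\lambda)S_1PS_1$, $(\widetilde g_1^{\pm}(\lambda))^2S_1PD_0PS_1$, and $\lambda^2\widetilde g_1^{\pm}(\lambda)[\,\cdots]$ each contain a factor $S_1P$, $PS_1$, $vS_1$ or $S_1v$ and therefore drop out, leaving
\[
B_{\pm}(\lambda)=\lambda^2 S_1vG_1vS_1+g_2^{\pm}(\lambda)S_1vG_2vS_1+\lambda^4 S_1vG_3vS_1+\widetilde O_1(\lambda^{4+}).
\]
The crucial point is that the leading term is now $\lambda^2$ times $S_1vG_1vS_1$, which by the forthcoming Lemma~\ref{vG1v kernel} is invertible on $S_1L^2$ with absolutely bounded inverse $D_2$. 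Since $g_2^{\pm}(\lambda)\sim\lambda^4\log\lambda$ and the remaining corrections are $o(\lambda^2)$, I would factor out $\lambda^2 S_1vG_1vS_1$ and run a Neumann series on $S_1L^2$ to obtain
\[
B_{\pm}(\lambda)^{-1}=\frac{D_2}{\lambda^2}-\frac{g_2^{\pm}(\lambda)}{\lambda^4}D_2(S_1vG_2vS_1)D_2-D_2(S_1vG_3vS_1)D_2+\widetilde O_1(\lambda^{0+}),
\]
the next Neumann term being $\widetilde O_1(\lambda^{2}(\log\lambda)^2)$. This is precisely where the second-kind analysis is ``simpler than'' the first kind: the leading block is invertible outright, so no scalar logarithmic resolvent $f^{\pm}(\lambda)$ is needed.

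Finally I would reassemble via $M^{\pm}(\lambda)^{-1}=(M^{\pm}(\lambda)+S_1)^{-1}+(M^{\pm}(\lambda)+S_1)^{-1}S_1B_{\pm}(\lambda)^{-1}S_1(M^{\pm}(\lambda)+S_1)^{-1}$. Using $vS_1=PS_1=0$ once more in \eqref{MS eval} collapses every explicit term of $(M^{\pm}(\lambda)+S_1)^{-1}S_1$ except $S_1$ itself, giving $(M^{\pm}(\lambda)+S_1)^{-1}S_1=S_1+\widetilde O_1(\lambda^{4+})$ and, symmetrically, $S_1(M^{\pm}(\lambda)+S_1)^{-1}=S_1+\widetilde O_1(\lambda^{4+})$; paired against $B_{\pm}(\lambda)^{-1}=\widetilde O_1(\lambda^{-2})$ the cross terms are $\widetilde O_1(\lambda^{2+})$ and are absorbed into the error. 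Together with $(M^{\pm}(\lambda)+S_1)^{-1}=D_0+\widetilde O_1(\lambda^{2-})$ from \eqref{M plus Sv2}, this yields $M^{\pm}(\lambda)^{-1}=D_0+B_{\pm}(\lambda)^{-1}+\widetilde O_1(\lambda^{0+})$. Collecting the three scales then matches \eqref{Minv2}: the $\lambda^{-2}$ block reproduces the leading singular term, the $g_2^{\pm}(\lambda)/\lambda^4$ (i.e.\ $\log\lambda$) block gives $K_1=-D_2(S_1vG_2vS_1)D_2$, and the $\lambda$-independent block gives $K_2=D_0-D_2(S_1vG_3vS_1)D_2$; both are absolutely bounded since $D_0$ is (Lemma~\ref{d0bounded}) and $D_2$, $S_1vG_2vS_1$, $S_1vG_3vS_1$ are finite rank.

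I expect the main obstacle to be bookkeeping rather than conceptual. The only genuinely structural inputs are the vanishing $S_1v=0$ and the invertibility of $S_1vG_1vS_1$ from Lemma~\ref{vG1v kernel}; granting these, the remaining difficulty is to verify that every step respects the $\widetilde O_1$ (one-derivative) class, so that differentiating the singular factors $\lambda^{-2}$, $\log\lambda$, and $g_2^{\pm}(\lambda)/\lambda^4$ keeps all remainders in $\widetilde O_1(\lambda^{0+})$, and that the finiteness/decay hypothesis $\beta>8$ in Theorem~\ref{thm:res2} indeed renders $S_1vG_2vS_1$ and $S_1vG_3vS_1$ (hence $K_1,K_2$) well defined and absolutely bounded.
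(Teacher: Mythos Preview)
Your overall strategy---apply Lemma~\ref{JNlemma} with $S=S_1$, use $S_1P=PS_1=0$ to simplify $B_\pm(\lambda)$, invert by a Neumann series around $\lambda^2 S_1vG_1vS_1$, and reassemble---is exactly what the paper does. However, there is one genuine error that propagates through the reassembly step.

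From $S_1PS_1=0$ you correctly deduce that $S_1v=0$ \emph{as an element of $L^2$}, i.e.\ the function $v$ is orthogonal to $S_1L^2$; this is equivalent to $PS_1=S_1P=0$. But you then assert ``by self-adjointness of $S_1$ and $v$ also $vS_1=0$,'' treating $v$ as the multiplication operator. That is false: multiplication by $v(x)$ certainly does not annihilate $S_1L^2$. Indeed your own argument refutes it, since if $vS_1=0$ as operators then $S_1vG_1vS_1=0$, contradicting the invertibility of $S_1vG_1vS_1$ that you invoke two lines later to define $D_2$. The source of the confusion is the dual role of $v$: in $P=\|V\|_1^{-1}v\la v,\cdot\ra$ the symbol $v$ denotes an $L^2$ vector, whereas in $vG_1v$, $vG_2v$, etc.\ it denotes a multiplication operator.

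Consequently your claim $(M^{\pm}(\lambda)+S_1)^{-1}S_1=S_1+\widetilde O_1(\lambda^{4+})$ is too strong. With only $PS_1=0$ available, the term $-\lambda^2 D_0vG_1vD_0S_1=-\lambda^2 D_0vG_1vS_1$ in \eqref{MS eval} survives, and the paper obtains
\[
(M^{\pm}(\lambda)+S_1)^{-1}S_1=S_1-\lambda^2 D_0vG_1vS_1+\widetilde O_1(\lambda^{2+}),
\]
and symmetrically on the other side. When these flanking $\lambda^2$ terms hit $D_2/\lambda^2$ in $B_\pm(\lambda)^{-1}$ they produce additional $\lambda$-independent contributions to $K_2$ (namely $-D_0vG_1vS_1D_2-D_2S_1vG_1vD_0$); your formula $K_2=D_0-D_2(S_1vG_3vS_1)D_2$ is therefore incomplete. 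The structure of \eqref{Minv2} is unaffected---only the explicit form of $K_2$ changes---so once you drop the spurious claim $vS_1=0$ and carry the $\lambda^2$ terms through, your argument coincides with the paper's.
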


\begin{proof}

We note the identity
$S_2P=PS_2=0$, which is shown in Section~\ref{sec:spec}
below.   In addition, use $S_1D_0=D_0S_1=S_1=S_2$ to see 
\begin{align*}
	S_1(M^{\pm}(\lambda)+S_1)^{-1}S_1=S_1-\lambda^2 S_1vG_1vS_1
	-g_2^{\pm}(\lambda)S_1vG_2vS_1-\lambda^4 S_1vG_3vS_1
	+S_1E_2^{\pm}(\lambda) S_1.
\end{align*}
Therefore  
\begin{align}\label{B def r2}
	B^{\pm}(\lambda)
	=\lambda^2 S_1vG_1vS_1
	+g_2^{\pm}(\lambda)S_1vG_2vS_1+\lambda^4 S_1vG_3vS_1
	-S_1E_2^{\pm}(\lambda) S_1,
\end{align}
and 
\begin{align*}
	B^{\pm}(\lambda)^{-1}&=\frac{D_2}{\lambda^2}
	\Big[\mathbbm 1+ \frac{g_2^{\pm}(\lambda)}{\lambda^2}
	S_1vG_2vS_1D_2+\lambda^2 S_1vG_3vS_1D_2+
	S_1\frac{E_2^{\pm}(\lambda)}{\lambda^2}S_1D_2\Big]^{-1}\\
	&=\frac{D_2}{\lambda^2}+\frac{g_2^{\pm}(\lambda)}
	{\lambda^4}	D_5+D_6+\widetilde O_1(\lambda^{0+})
\end{align*}
with $D_5,D_6$ absolutely bounded operators 
with real-valued kernels.  We note that when $S_1=S_2$,
using \eqref{M plus S} we have
\begin{align*}
	(M^{\pm}(\lambda)+S_1)^{-1}S_1
	&=S_1-\lambda^2 D_0vG_1vS_1+
	\widetilde O_1(\lambda^{2+}),\\
	S_1(M^{\pm}(\lambda)+S_1)^{-1}
	&=S_1-\lambda^2 S_1vG_1vD_0+
	\widetilde O_1(\lambda^{2+}).
\end{align*}
So that
\begin{multline}\label{B r2}
	(M^{\pm}(\lambda)+S_1)^{-1}S_1 B^{\pm}(\lambda)^{-1}
	S_1(M^{\pm}(\lambda)+S_1)^{-1}\\
	= \frac{D_2}{\lambda^2}+\frac{g_2^{\pm}(\lambda)}
	{\lambda^4}S_1D_5S_1+S_1D_6S_1-
	S_1vG_1vS_1D_2-D_2S_1vG_1vS_1+\widetilde O_1(\lambda^{0+}).
\end{multline}
This along with the bound 
$(M^{\pm}(\lambda)+S_1)^{-1}=D_0+\widetilde 
O_1(\lambda^{2-})$ in Lemma~\ref{JNlemma} 
establishes the claim.
\end{proof}

The form of this expansion is similar to that found
in Lemma~3.2 in \cite{JY4} using non-symmetric
resolvent expansions.
We are now ready to prove Theorem~\ref{thm:res2}.

\begin{proof}[Proof of Theorem~\ref{thm:res2}]

We need to understand
the contribution of Proposition~\ref{prop:Minv2}
to the Stone formula.  To get the $t^{-1}$ decay
rate, we need to use cancellation between the `+' and `-'
terms in
$$
R_0^+VR_0^+vM^+(\lambda)^{-1}vR_0^+VR_0^+
- R_0^-VR_0^-vM^-(\lambda)^{-1}vR_0^-VR_0^-.
$$
As with resonances of the first kind, we use
the algebraic fact~\eqref{alg fact}.
Two kinds of terms occur in this decomposition; one featuring
the difference $M^+(\lambda)^{-1} - M^-(\lambda)^{-1}$ and
ones containing a difference of free resolvents.
For the first kind we use Proposition~\ref{prop:Minv2} and that   $g_2^+ - g_2^- = c\lambda^4$
to obtain
\begin{align}\label{B diff r2}
	M^+(\lambda)^{-1} - M^-(\lambda)^{-1}
	=cK_1+\widetilde O_1(\lambda^{0+}).
\end{align}
We use that
$R_0=G_0+\widetilde O_1(\lambda^{0+})$
and consider the most singular terms this difference
contributes, i.e.,
\begin{align*}
	G_0VG_0v S_1D_5S_1 vG_0VG_0 +
	\widetilde O_1(\lambda^{0+}).
\end{align*}
The time decay follows from
\begin{align*}
	\bigg|\int_0^\infty e^{it\lambda^2}\lambda 
	\chi(\lambda)
	[1+\widetilde O_1(\lambda^{0+})]\, d\lambda \bigg|
	\les t^{-1},
\end{align*}
and an analysis as in \eqref{eq:L2 business} noting
that $K_1$ is absolutely bounded.
For the terms of the second kind the difference  of `+' and
`-' terms in \eqref{alg fact} acts on one of the resolvents.
As usual, the most delicate case is of the form
\begin{align*}
	(R_0^+(\lambda^2)-R_0^-(\lambda^2))VG_0v 
	[\eqref{Minv2}]v G_0VG_0.
\end{align*}
Since $R_0^+-R_0^-=c\lambda^2+\widetilde O_1
(\lambda^4 r^2 )$ for $\lambda r \les 1$, we need to bound
\begin{align*}
	cVG_0v 	D_2v G_0VG_0+\widetilde O_1(\lambda^4 r^2
	)VG_0v\frac{D_2}{\lambda^2}vG_0VG_0 
	+ \widetilde O_1 (\lambda^{0+}).
\end{align*}
The first and third terms clearly satisfy the $t^{-1}$
decay rate from the previous discussion.  For the
second term, we recall the support conditions to see
\begin{align*}
	\int_0^\infty e^{it\lambda^2}\frac{\chi(\lambda)}
	{\lambda}\widetilde O_1(\lambda^4 r^2 
	)\, d\lambda
	&\les t^{-1} r^2 \int_0^{1/r} \lambda
	\, d\lambda \les t^{-1}.
\end{align*}

On the other hand, if $\lambda r\gtrsim 1$, we do not
use the cancellation of the `+' and `-' terms but
instead use the expansion \eqref{JYasymp2}.  The 
most singular term is of the form
\begin{align*}
	\int_{1/r}^\infty e^{it\lambda^2}\lambda \chi(\lambda)
	\frac{e^{i\lambda r}\omega(\lambda r)}{\lambda r}
	\, d\lambda.
\end{align*}
Using  $\omega(z)=\widetilde{O}((1+|z|)^{-\f12})$ after an integration by parts, we bound by
\begin{align*}
	t^{-1}\int_{1/r}^{\infty} \bigg|
	\frac{d}{d\lambda}\Big(\chi(\lambda)
	\frac{e^{i\lambda r}\omega(\lambda r)}{\lambda r}
	\Big)\bigg| \, d\lambda
	&\les t^{-1}  \int_{1/r}^\infty r^{-\f32}\lambda^{-\f52}
	+r^{-\f12}\lambda^{-\f32}\, d\lambda\\
	&\les t^{-1}(1+r^{-\f32}) \les t^{-1}.
\end{align*}
Where we used that $r\gtrsim 1$ in the last step.
The integrals in the spatial variables is controlled 
as in \eqref{eq:L2 business} since $D_2$ is absolutely
bounded.

The remaining terms 
can be bounded as in the case of
a resonance of the first kind in Section~\ref{sec:first}.

\end{proof}

\section{Resonance of the third kind}   \label{sec:third}
In this section we prove  Theorem~\ref{thm:main} in the case of a resonance of the third
kind, that is when $S_1\neq 0$, $S_2\neq 0$ and $S_1-S_2\neq 0$.  
   In particular,
we prove
\begin{theorem}\label{thm:res3}
	
	Suppose that $|V(x)|\les \la x\ra^{-8-}$.
	If there is a resonance of the third kind at zero,
	then there is a finite rank operator $F_t$
	such that
	$$
		\|e^{itH}\chi(H)P_{ac}(H)-F_t\|_{L^1\to L^\infty} \les
		t^{-1} ,
		\qquad t>2.
	$$
	with
	$$
		\|F_t\|_{L^1\to L^\infty} \les \frac{1}{\log t},
		\qquad t>2.
	$$

\end{theorem}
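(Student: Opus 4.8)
The plan is to treat a resonance of the third kind as a superposition of the two cases already settled: the resonance subspace $(S_1-S_2)L^2$ will behave as in Section~\ref{sec:first} and produce the operator $F_t$ with $1/\log t$ decay, while the eigenvalue subspace $S_2L^2$ will behave as in Section~\ref{sec:second} and contribute only at the $t^{-1}$ level. The first task is therefore to produce an expansion for $M^{\pm}(\lambda)^{-1}$ exhibiting both scales at once. As before I would invert $M^{\pm}(\lambda)$ via Lemma~\ref{JNlemma} with $S=S_1$; using \eqref{Mexp2} and Lemma~\ref{M+S1inverse}, the reduced operator \eqref{B defn} takes the general form
\begin{align*}
B_{\pm}(\lambda)=\widetilde g_1^{\pm}(\lambda)\,T_1+\lambda^2\,S_1vG_1vS_1+S_1\widetilde O_1(\lambda^{2+})S_1,
\end{align*}
where now $T_1=S_1PS_1$ is invertible on $(S_1-S_2)L^2$ but annihilates $S_2L^2$.

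Since $T_1$ is only partially invertible, the crucial step is a \emph{second} application of Lemma~\ref{JNlemma}, applied to $A=B_{\pm}(\lambda)$ with the projection $S=S_2$. On $(S_1-S_2)L^2$ the leading term $\widetilde g_1^{\pm}(\lambda)T_1\sim\lambda^2\log\lambda$ is invertible, so $B_{\pm}(\lambda)+S_2$ is invertible, and its inverse carries the factor $f^{\pm}(\lambda)=\widetilde O_1((\lambda^2\log\lambda)^{-1})$ on the resonance subspace while staying $O(1)$ on $S_2L^2$; here I use $S_2T_1=T_1S_2=0$, valid because $S_2$ is the orthogonal projection onto the kernel of the self-adjoint operator $T_1$. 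The second reduced operator $S_2-S_2(B_{\pm}(\lambda)+S_2)^{-1}S_2$ then has leading behavior $\lambda^2\,S_2vG_1vS_2$, invertible on $S_2L^2$ by Lemma~\ref{vG1v kernel}; writing $D_2=(S_2vG_1vS_2)^{-1}$ produces a $D_2/\lambda^2$ contribution exactly as in Proposition~\ref{prop:Minv2}. Unwinding both applications of Lemma~\ref{JNlemma} yields an expansion of the schematic form
\begin{align*}
M^{\pm}(\lambda)^{-1}=f^{\pm}(\lambda)\,Q_r+\frac{1}{\lambda^2}\,Q_e+\frac{g_2^{\pm}(\lambda)}{\lambda^4}\,Q_e'+K+\widetilde O_1((\log\lambda)^{-1}),
\end{align*}
with $Q_r$ the finite-rank resonance operator (whose rank, hence that of $F_t$, is at most two by Corollary~\ref{ranks}), $Q_e,Q_e'$ finite-rank eigenvalue operators built from $D_2$, and $K$ a $\lambda$-independent absolutely bounded operator.

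With this in hand I would insert the expansion into the tail \eqref{bs tail} of the symmetric resolvent identity and then into the Stone formula \eqref{Stone}, splitting the $\pm$ difference by the algebraic identity \eqref{alg fact}. The $f^{\pm}(\lambda)Q_r$ piece is treated verbatim as in Section~\ref{sec:first}: its most singular contribution, controlled by $f^+-f^-=\widetilde O_1((\lambda\log\lambda)^{-2})$ from \eqref{f diff}, yields the finite-rank $F_t$ with $\|F_t\|_{L^1\to L^\infty}\les 1/\log t$ via Lemma~\ref{log decay}, while the remainder obeys the $t^{-1}$ bound through Lemma~\ref{lem:ugly}. The $\lambda^{-2}Q_e$ and $\lambda^{-4}g_2^{\pm}Q_e'$ pieces are treated verbatim as in Section~\ref{sec:second}: the real, $\pm$-even part $\lambda^{-2}Q_e$ cancels in $M^+(\lambda)^{-1}-M^-(\lambda)^{-1}$, and wherever a singular eigenvalue factor instead meets a difference $R_0^+-R_0^-=c\lambda^2+\widetilde O_1(\lambda^4r^2)$ the singularity is absorbed, all contributions decaying at rate $t^{-1}$ with no effect on $F_t$. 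The finite Born series \eqref{bs finite} disperses at rate $t^{-1}$ exactly as in the proof of Theorem~\ref{thm:res1}, and the $L^1\to L^\infty$ mapping bounds follow throughout from Hilbert--Schmidt estimates of the type \eqref{eq:L2 business}; the hypothesis $\beta>8$ enters precisely where the eigenvalue expansion \eqref{Mexp2} and its $G_2,G_3$ operators demand the stronger decay, as in Section~\ref{sec:second}.

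The principal obstacle is the nested inversion described in the second paragraph: one must verify that the two singular scales $f^{\pm}(\lambda)$ and $\lambda^{-2}$ genuinely decouple, i.e.\ that the cross terms coupling the resonance subspace $(S_1-S_2)L^2$ and the eigenvalue subspace $S_2L^2$ in the two-step Jensen--Nenciu expansion are strictly subordinate (of size $\widetilde O_1((\log\lambda)^{-1})$ or better). This is what guarantees that no additional $1/\log t$ term becomes attached to the eigenvalue projection, so that $F_t$ is built solely from $Q_r$. Once this separation is in place, the remaining estimates are routine adaptations of Sections~\ref{sec:first} and~\ref{sec:second}.
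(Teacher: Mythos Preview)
Your proposal is correct and arrives at the same schematic expansion for $M^{\pm}(\lambda)^{-1}$ and the same treatment of the Stone formula as the paper.  The one genuine difference is in how you invert $B^{\pm}(\lambda)$ on $S_1L^2$: you propose a second application of Lemma~\ref{JNlemma} with $S=S_2$, whereas the paper instead writes $S_1L^2=\Gamma L^2\oplus S_2L^2$ (with $\Gamma=S_1-S_2$ rank one by Corollary~\ref{ranks}) and applies the Feshbach formula directly to the resulting $2\times 2$ block operator.  Both routes are valid; the Feshbach inversion makes the cross terms coupling $\Gamma L^2$ and $S_2L^2$ completely explicit in a single step, which is precisely what you flag as the ``principal obstacle'' in your approach.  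In particular the rank-two operator $S$ of the paper's~\eqref{S defn}, carrying the $f_1^\pm(\lambda)$ singularity, is read off immediately from the Feshbach matrix, and the subordination of the mixed terms follows from the elementary identities in~\eqref{fgproperties} rather than from a nested Neumann series.  Your iterated Jensen--Nenciu argument would reach the same conclusion, but you would need to expand $B^{\pm}(\lambda)$ to higher order than the three-term form you wrote (you need the $g_2^{\pm}(\lambda)S_1vG_2vS_1$ contribution explicitly, as in the paper's display preceding~\eqref{S defn}, to extract the $g_2^{\pm}(\lambda)/\lambda^4$ piece you later use) and to track the off-diagonal blocks of $(B^{\pm}(\lambda)+S_2)^{-1}$ carefully; the Feshbach formula packages all of this more cleanly.
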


In fact, $F_t$ has rank at most two.  This follows
from the expansions below and the rank of the operator
$S$ defined in \eqref{S defn}.
We note that
the expansion in \eqref{MS eval} is valid, but in this
section we do not have that $S_1P=0$.  Using
\eqref{M2 exp} in Lemma~\ref{lem:M_exp},
we have
\begin{align*}
	B^{\pm}(\lambda)&= \widetilde g_1^\pm(\lambda)
	S_1PS_1+\lambda^2 S_1vG_1vS_1 -(\widetilde g_1^\pm(\lambda))^2 S_1PD_0PS_1\\ & \qquad-
	\lambda^2 \widetilde g_1^\pm(\lambda)[S_1PD_0vG_1vS_1
	+S_1vG_1vD_0PS_1] +g_2^\pm(\lambda) S_1vG_2vS_1\\
	&\qquad +\lambda^4 S_1vG_3vS_1+ \widetilde O_1(\lambda^{4+})
	\\& =: \widetilde g_1^\pm(\lambda)
	S_1PS_1+\lambda^2 S_1vG_1vS_1 +(\widetilde g_1^\pm(\lambda))^2 \Gamma_1 +
	\lambda^2 \widetilde g_1^\pm(\lambda)\Gamma_2 +g_2^\pm(\lambda)\Gamma_3\\
	&\qquad +\lambda^4 \Gamma_4 + \widetilde O_1(\lambda^{4+}).
\end{align*}

According to Lemma~\ref{JNlemma} we need to invert
$B^{\pm}(\lambda)$, however since $S_2\neq 0$ the kernel of
$S_1PS_1$ is non-trivial.  Rather than use 
Lemma~\ref{JNlemma} again, we use the well-known  Feshbach
formula. Define the  operator 
$\Gamma$ by $S_1=S_2+\Gamma$.
We note that $\Gamma$  is a rank one operator by
Corollary~\ref{ranks} below.
   We will first express $B^{\pm}(\lambda)$ with respect
to the decomposition $S_1L^2(\R^4)=S_2L^2(\R^4)\oplus
\Gamma L^2(\R^4)$.  

We define the finite rank operator $S$ by
\begin{align}\label{S defn}
		S:=\left[
		\begin{array}{cc}
			\Gamma & -\Gamma vG_1vD_2\\
			-D_2vG_1v\Gamma &
			D_2vG_1v\Gamma vG_1v	D_2
		\end{array}
		\right]
\end{align}

\begin{lemma}

In the case of a resonance of the third kind we have
\begin{align}\label{B-13}
	B^{\pm}(\lambda)^{-1}& =f_1^{\pm}(\lambda) S+\frac{D_2}{\lambda^2}+\frac{g_2^\pm(\lambda)}{\lambda^4} K_1+K_2+ \widetilde O_1(1/\log(\lambda)).
\end{align}
Here $K_1,K_2$ are $\lambda$ independent absolutely bounded operators, $f_1^+(\lambda)
=(\lambda^2(a\log \lambda +z))^{-1}$ with $a\in \R \setminus \{0\}$ and $z\in \mathbb C \setminus \R$,
and $f_1^-(\lambda)=\overline{f_1^+(\lambda)}$.

\end{lemma}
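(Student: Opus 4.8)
The plan is to invert $B^\pm(\lambda)$ on $S_1L^2$ by the Feshbach (Schur complement) formula relative to the orthogonal splitting $S_1L^2(\R^4)=\Gamma L^2(\R^4)\oplus S_2L^2(\R^4)$, as announced before the statement. The algebraic input is the identity $S_2P=PS_2=0$ (proved in Section~\ref{sec:spec}), which gives $S_1PS_1=(S_2+\Gamma)P(S_2+\Gamma)=\Gamma P\Gamma$; since $S_2$ is the Riesz projection onto the kernel of $S_1PS_1$ inside $S_1L^2$, the operator $\Gamma P\Gamma$ is invertible on the one–dimensional space $\Gamma L^2$, i.e. $\Gamma P\Gamma=c_0\Gamma$ with $c_0=\|V\|_1^{-1}|\la v,\psi\ra|^2>0$ (here $\Gamma=|\psi\ra\la\psi|$). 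Substituting the expansion of $B^\pm(\lambda)$ recorded before the lemma into this splitting, every $P$–term that touches $S_2$ drops out, so the four blocks $\Gamma B^\pm\Gamma$, $b:=\Gamma B^\pm S_2$, $c:=S_2B^\pm\Gamma$, $d:=S_2B^\pm S_2$ have leading parts $c_0\widetilde g_1^\pm(\lambda)\Gamma$, $\lambda^2\Gamma vG_1vS_2$, $\lambda^2 S_2vG_1v\Gamma$, and $\lambda^2 S_2vG_1vS_2$, respectively.

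First I would invert the $S_2$–block. Because the $P$–terms vanish in $d$, we have $d=\lambda^2 S_2vG_1vS_2+g_2^\pm(\lambda)S_2vG_2vS_2+\lambda^4 S_2vG_3vS_2+\widetilde O_1(\lambda^{4+})$, and with $D_2:=(S_2vG_1vS_2)^{-1}$ on $S_2L^2$ (invertible by Lemma~\ref{vG1v kernel}) a Neumann series yields
\[
d^{-1}=\frac{D_2}{\lambda^2}-\frac{g_2^\pm(\lambda)}{\lambda^4}D_2vG_2vD_2-D_2vG_3vD_2+\widetilde O_1(\lambda^{0+}).
\]
This already produces the $\tfrac{D_2}{\lambda^2}$ term, the $\tfrac{g_2^\pm}{\lambda^4}K_1$ term with $K_1=-D_2vG_2vD_2$, a $\lambda$–independent piece for $K_2$, and an error stronger than $\widetilde O_1(1/\log\lambda)$ (since $\lambda^{0+}\ll 1/\log\lambda$).

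Next is the Schur complement $\Sigma^\pm(\lambda):=\Gamma B^\pm\Gamma-b\,d^{-1}c$, a scalar on the one–dimensional space $\Gamma L^2$. The correction $b\,d^{-1}c=\lambda^2\Gamma vG_1vD_2vG_1v\Gamma+\cdots$ is $O(\lambda^2)$ with \emph{no} logarithm, whereas the leading term $c_0\widetilde g_1^\pm(\lambda)$ has size $\lambda^2\log\lambda$. Absorbing the real $O(\lambda^2)$ correction into the constant of the log–denominator, one gets $\Sigma^\pm(\lambda)=\lambda^2(a\log\lambda+z)+\widetilde O_1(\lambda^4(\log\lambda)^2)$, with $a=c_0\|V\|_1a_1\in\R\setminus\{0\}$ and $z\in\mathbb C\setminus\R$ whose imaginary part is inherited from $z_1$. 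Inverting the scalar,
\[
\Sigma^\pm(\lambda)^{-1}=\frac{1}{\lambda^2(a\log\lambda+z)}+\frac{\widetilde O_1(\lambda^4(\log\lambda)^2)}{\lambda^4(a\log\lambda+z)^2}=f_1^\pm(\lambda)+(\text{const})+\widetilde O_1(1/\log\lambda),
\]
the bounded constant going into $K_2$, and $f_1^-=\overline{f_1^+}$ following from $g_1^-=\overline{g_1^+}$. Reassembling through
\[
B^{-1}=\begin{pmatrix}\Sigma^{-1} & -\Sigma^{-1}bd^{-1}\\ -d^{-1}c\Sigma^{-1} & d^{-1}+d^{-1}c\Sigma^{-1}bd^{-1}\end{pmatrix},
\]
the leading factors $f_1^\pm$ in $\Sigma^{-1}$, $\lambda^2$ in $b,c$, and $\lambda^{-2}D_2$ in $d^{-1}$ turn the four blocks into $f_1^\pm\Gamma$, $-f_1^\pm\Gamma vG_1vD_2$, $-f_1^\pm D_2vG_1v\Gamma$, and $f_1^\pm D_2vG_1v\Gamma vG_1vD_2$, which are exactly $f_1^\pm(\lambda)$ times the entries of $S$ in \eqref{S defn}; the residual $d^{-1}$ term supplies $\tfrac{D_2}{\lambda^2}+\tfrac{g_2^\pm}{\lambda^4}K_1+K_2$.

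The main obstacle is the logarithmic bookkeeping across the two scales $\lambda^2\log\lambda$ (from $\widetilde g_1^\pm$) and $\lambda^2$ (from $vG_1v$): since $f_1^\pm\sim(\lambda^2\log\lambda)^{-1}$, a single stray factor of $\lambda^2$ lands precisely at the $1/\log\lambda$ threshold, so one must verify that every cross term not listed above either renormalizes into the constant $z$ (equivalently into $K_2$) or is genuinely of size $\widetilde O_1(1/\log\lambda)$. In particular the $O(\lambda^2)$ Schur correction must be folded into $z$, after which the leftover $\widetilde O_1(\lambda^4(\log\lambda)^2)$ inverts to a $\lambda$–independent constant plus a true $\widetilde O_1(1/\log\lambda)$ remainder; and, because all bounds are in the $\widetilde O_1$ sense, one must confirm that each of these estimates survives a single differentiation in $\lambda$.
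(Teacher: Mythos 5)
Your proposal is correct and takes essentially the same route as the paper: a Feshbach/Schur-complement inversion of $B^{\pm}(\lambda)$ with respect to the splitting $S_1L^2=\Gamma L^2\oplus S_2L^2$, relying on $S_2P=PS_2=0$, the invertibility of $S_2vG_1vS_2$ (Lemma~\ref{vG1v kernel}), and the log-ratio identities (the paper's \eqref{fgproperties}) to sort the terms into $f_1^\pm S$, $D_2/\lambda^2$, $\frac{g_2^\pm}{\lambda^4}K_1$, $K_2$, and a $\widetilde O_1(1/\log\lambda)$ remainder. The only (immaterial) organizational difference is that the paper applies the Feshbach formula to the exactly invertible two-term leading part $A^\pm(\lambda)$ of \eqref{B fesh} and then recovers $B^\pm(\lambda)^{-1}$ by a global Neumann series using $\Gamma_1D_2=D_2\Gamma_1=D_2\Gamma_2D_2=0$, whereas you apply the Schur complement to the full operator and run the Neumann series inside the $S_2$-block and the scalar complement on $\Gamma L^2$ --- the same computation performed in a different order.
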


\begin{proof}

Here we use that $S_2P=PS_2=0$ to
see that the two leading terms of $B^{\pm}(\lambda)$ can be written as
\begin{align}\label{B fesh}
	A^{\pm}(\lambda):=\lambda^2 \left[
	\begin{array}{cc}
		\frac{\widetilde g_1^{\pm}(\lambda)}
		{\lambda^2}\Gamma P\Gamma +   
		\Gamma vG_1v\Gamma &
		\Gamma vG_1vS_2 \\
		S_2vG_1v\Gamma &
		S_2vG_1vS_2
	\end{array}
	\right].
\end{align}
The Feshbach formula tells us that
\begin{align}\label{fesh}
	\left[
	\begin{array}{ll}
		a_{11} & a_{12} \\ a_{21} & a_{22}
	\end{array}
	\right]^{-1}
	=\left[
	\begin{array}{cc}
		a & -aa_{12}a_{22}^{-1} \\ 
		-a_{22}^{-1}a_{21}a & a_{22}^{-1}a_{21}aa_{12}
		a_{22}^{-1}+a_{22}^{-1}
	\end{array}
	\right],
\end{align}
provided $a_{22}$ is invertible and $a=(a_{11}-a_{12}a_{22}^{-1}a_{21})^{-1}$ exists.

In our case, $a_{22}=S_2vG_1vS_2$ is known to be
invertible by Lemma~\ref{vG1v kernel} below.  
We denote $D_2:=(S_2vG_1vS_2)^{-1}$ and note that
$S_2D_2=D_2S_2=D_2$. 
Further
\begin{align*}
	a&=\bigg[\frac{\widetilde g_1^{\pm}(\lambda)}
	{\lambda^2}\Gamma P\Gamma + 
	\Gamma vG_1v\Gamma-\Gamma vG_1vD_2
	vG_1v\Gamma\bigg]^{-1}
	=\bigg[\frac{\widetilde g_1^{\pm}(\lambda)}
	{\lambda^2}c_1+c_2+c_3\bigg]^{-1} \Gamma\\
	&:=h^{\pm}	(\lambda)^{-1}\Gamma
\end{align*}
Here $c_1=$Trace$(\Gamma P\Gamma)$,
$c_2=$Trace$(\Gamma vG_1v\Gamma)$, and
$c_3=$Trace$(\Gamma vG_1v D_2
vG_1v\Gamma)$ are real-valued constants.  Further,
$h^{\pm}(\lambda)=a\log \lambda +z$ with 
$a\in \R\setminus \{0\}$ and $z\in \mathbb C\setminus \R$.

Therefore, by the Feshbach formula we have
\begin{align}
	A^{\pm}(\lambda)^{-1}
	&=\frac{1}{\lambda^2 h^{\pm}(\lambda)}
	\left[
	\begin{array}{cc}
		\Gamma & -\Gamma vG_1vD_2\\
		-D_2vG_1v\Gamma &
		D_2vG_1v\Gamma vG_1v	D_2
	\end{array}
	\right]
	+\frac{D_2}{\lambda^2}\\ \nn
	&=:f_1^{\pm}(\lambda) S+\frac{D_2}{\lambda^2}.
\end{align}
Here the matrix operator $S$ has rank at most two. 
By a Neumann expansion, we obtain
\begin{align*}
	B^{\pm}(\lambda)^{-1}& =A^{\pm}(\lambda)^{-1}
	[\mathbbm 1+(B^{\pm}(\lambda)-A^{\pm}(\lambda))A^{\pm}
	(\lambda)^{-1}]^{-1}\\
	&=A^{\pm}(\lambda)^{-1}-A^{\pm}(\lambda)^{-1}[B^{\pm}(\lambda)-A^{\pm}(\lambda)] A^{\pm}(\lambda)^{-1} +\widetilde O_1(\lambda^{0+}).
\end{align*}
Here we note that $D_2S_1P=D_2S_2P=0$.  Therefore 
$$
\Gamma_1 D_2=D_2\Gamma_1 =D_2\Gamma_2D_2 =0.
$$
Further noting that
\begin{equation} \label{fgproperties}
\begin{aligned}
 f_1^\pm(\lambda)\widetilde{g_1}^\pm(\lambda)=c_1+\widetilde O_1(1/\log(\lambda)), \\
 \frac{f_1^\pm(\lambda)}{\lambda^2}  g_2^\pm(\lambda)=c_2+\widetilde O_1(1/\log(\lambda)),\\
 f_1^\pm(\lambda) \lambda^2, \,\,\, [f_1^\pm(\lambda)]^2 g_2^\pm(\lambda)= \widetilde O_1(1/\log(\lambda)),
\end{aligned}
\end{equation}
establishes the claim.

\end{proof}

\begin{prop}\label{pwave exp3}

	If there is a resonance of the third
	kind at zero, then
	\begin{align*}
		M^{\pm}(\lambda)^{-1} =  f_1^\pm(\lambda) S_1SS_1+\frac{D_2}{\lambda^2}+\frac{g_2^\pm(\lambda)}{\lambda^4} D_2\Gamma_3D_2+ K
		+\widetilde O_1(1/\log(\lambda)),
	\end{align*}
where $K$ is a $\lambda$ independent absolutely bounded operator.
\end{prop}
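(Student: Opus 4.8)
The plan is to substitute the expansion \eqref{B-13} for $B^{\pm}(\lambda)^{-1}$ into the Jensen--Nenciu identity of Lemma~\ref{JNlemma}, taken with $A=M^{\pm}(\lambda)$ and $S=S_1$,
\begin{align*}
	M^{\pm}(\lambda)^{-1}=(M^{\pm}(\lambda)+S_1)^{-1}+(M^{\pm}(\lambda)+S_1)^{-1}S_1\,B^{\pm}(\lambda)^{-1}\,S_1(M^{\pm}(\lambda)+S_1)^{-1},
\end{align*}
and then to collect terms by their order of singularity in $\lambda$. The outer factors are supplied by Lemma~\ref{M+S1inverse}: by \eqref{M plus Sv2} the first summand is $D_0+\widetilde O_1(\lambda^{2-})$, contributing $D_0$ to the $\lambda$-independent operator $K$ and an acceptable remainder, while \eqref{M plus S} gives
\begin{align*}
	(M^{\pm}(\lambda)+S_1)^{-1}S_1=S_1-\widetilde g_1^{\pm}(\lambda)D_0PS_1-\lambda^2 D_0vG_1vS_1+\widetilde O_1(\lambda^{2+}),
\end{align*}
together with the mirror identity for $S_1(M^{\pm}(\lambda)+S_1)^{-1}$.

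Since the identity is linear in $B^{\pm}(\lambda)^{-1}$, I would sandwich the four terms of \eqref{B-13} one at a time, using the projection relations $S_1D_0=D_0S_1=S_1$, $S_1D_2=D_2S_1=D_2$ (as $D_2=S_2D_2S_2$ and $S_1S_2=S_2$), and $S_1SS_1=S$ (all entries of the matrix $S$ in \eqref{S defn} are built from $\Gamma$ and $D_2$). This immediately reproduces the three explicit singular terms: $f_1^{\pm}(\lambda)S_1SS_1$ from the first term of \eqref{B-13}, $D_2/\lambda^2$ from the second, and the coefficient $D_2\Gamma_3 D_2$ of $g_2^{\pm}(\lambda)/\lambda^4$ from the third. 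The remaining work is to confirm that every correction generated by the outer factors is either $\lambda$-independent, hence absorbed into $K$, or of size $\widetilde O_1(1/\log\lambda)$.

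The decisive point, and the one I expect to require the most care, is that the potentially dangerous cross terms vanish because $S_2P=PS_2=0$ (established in Section~\ref{sec:spec}). When the correction $-\widetilde g_1^{\pm}(\lambda)D_0PS_1$ meets $D_2/\lambda^2$ or $(g_2^{\pm}(\lambda)/\lambda^4)K_1$, the factor $PS_1D_2=PS_2D_2=0$ annihilates it, and the same applies to $K_1$ once the Neumann expansion shows it carries $D_2$ as a left factor; otherwise one would be left with a spurious $\widetilde g_1^{\pm}(\lambda)/\lambda^2\sim\log\lambda$ growth absent from the stated expansion. The surviving $\lambda^2$ correction to $D_2/\lambda^2$ produces the $\lambda$-independent operators $-D_0vG_1vD_2$ and $-D_2vG_1vD_0$, which join $K$. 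The identification of the $g_2^{\pm}(\lambda)/\lambda^4$ coefficient proceeds through the Neumann expansion $B^{\pm}(\lambda)^{-1}=A^{\pm}(\lambda)^{-1}-A^{\pm}(\lambda)^{-1}[B^{\pm}(\lambda)-A^{\pm}(\lambda)]A^{\pm}(\lambda)^{-1}+\cdots$ of the preceding lemma: the only contribution at this order comes from the $g_2^{\pm}(\lambda)\Gamma_3$ piece of $B^{\pm}-A^{\pm}$ flanked on both sides by the $D_2/\lambda^2$ part of $A^{\pm}(\lambda)^{-1}$, which after the sandwich is exactly the term recorded in the proposition.

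Finally I would dispatch the genuine cross terms with the scalar relations \eqref{fgproperties}: $f_1^{\pm}(\lambda)\widetilde g_1^{\pm}(\lambda)$ and $\lambda^{-2}f_1^{\pm}(\lambda)g_2^{\pm}(\lambda)$ are constants modulo $\widetilde O_1(1/\log\lambda)$, so their products with the fixed operators $D_0PS$, $D_0vG_1vS$, and so on, feed $K$; whereas $\lambda^2 f_1^{\pm}(\lambda)$ and $[f_1^{\pm}(\lambda)]^2 g_2^{\pm}(\lambda)$ are themselves $\widetilde O_1(1/\log\lambda)$, and the outer remainders $\widetilde O_1(\lambda^{2\pm})$ multiplied by the singular factors $f_1^{\pm}(\lambda)$, $\lambda^{-2}$, $g_2^{\pm}(\lambda)/\lambda^4$ all land in $\widetilde O_1(\lambda^{0+})\subset\widetilde O_1(1/\log\lambda)$. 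Absolute boundedness of $K$ is automatic, since each of its summands contains a finite-rank factor ($S_1$, $S_2$, $\Gamma$, $P$, $S$ or $D_2$) or the absolutely bounded operator $D_0$ of Lemma~\ref{d0bounded}. The main obstacle throughout is the bookkeeping: systematically verifying, after invoking $S_2P=0$, that no term of order $\lambda^{-2}\log\lambda$ or $f_1^{\pm}(\lambda)$ times a non-constant factor survives outside the three displayed singular terms.
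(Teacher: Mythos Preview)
Your proposal is correct and follows essentially the same route as the paper's proof: apply Lemma~\ref{JNlemma}, feed in \eqref{M plus S}--\eqref{M plus Sv2} for the outer factors and \eqref{B-13} for $B^{\pm}(\lambda)^{-1}$, and then use $D_2S_1P=D_2S_2P=0$ together with the scalar identities \eqref{fgproperties} to sort the cross terms. The paper's own argument is considerably terser---it simply lists these ingredients and declares the result---so your more explicit bookkeeping (in particular the check that $K_1$ carries $D_2$ on the left so that the $\widetilde g_1^{\pm}(\lambda)/\lambda^2$ cross term genuinely vanishes, and the identification $K_1=D_2\Gamma_3D_2$ via the Neumann expansion) fills in details the paper leaves to the reader.
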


We note that the expansion of  $M^{\pm}(\lambda)^{-1}$ is a sum of terms similar to the ones  in
Propositions~\ref{pwave exp} and \ref{prop:Minv2}.
Accordingly, we will refer to Sections~\ref{sec:first} and
\ref{sec:second} for most of the required bounds. 

\begin{proof}

	We note by Lemma~\ref{JNlemma}  we have
	\begin{align*}
	M^{\pm}(\lambda)^{-1}& =(M^\pm(\lambda)+S_1)^{-1}
	+(M^\pm(\lambda)+S_1)^{-1}S_1B_{\pm}(\lambda)^{-1}S_1
	(M^\pm(\lambda)+S_1)^{-1}.
	\end{align*}

	The representation \eqref{M plus Sv2} takes care of the first summand.  Using \eqref{M plus S}, and $S_1D_0=D_0S_1=S_1$, we have
	\begin{align*}
		(M^\pm(\lambda)+S_1)^{-1}S_1
		&=S_1
		-\widetilde g_1^{\pm}(\lambda)D_0PS_1
		-\lambda^2 D_0vG_1vS_1
		+\widetilde O_1(\lambda^{2+}),\\
		S_1(M^\pm(\lambda)+S_1)^{-1}
		&=S_1
		-\widetilde g_1^{\pm}(\lambda)S_1PD_0
		-\lambda^2 D_0vG_1vS_1
		+\widetilde O_1(\lambda^{2+}).
	\end{align*}
	This, the representation \eqref{B-13} and the discussion preceding it, the property $D_2S_1P = D_2S_2P = 0$, and~\eqref{fgproperties} yield the proposition. 
\end{proof}

We are now ready to prove the Theorem.

\begin{proof}[Proof of Theorem~\ref{thm:res3}]
The contribution of the first term in the proposition is essentially identical to the most singular term in the case of first kind. Using Lemma~\ref{log decay} gives, for $t>2$,
$$
\phi(t)K_2 \qquad \textrm{with} 
\qquad \phi(t)=O(1/\log(t)),
$$
where $K_2=G_0VG_0v S_1SS_1  vG_0VG_0$ is of rank at most two.
 
For the  terms
$K+\widetilde O_1(1/\log(\lambda))$,  one can easily get a time decay rate of
$t^{-1}$ by an integration by parts.  

The terms with $\frac{g_2^\pm(\lambda)}{\lambda^4} D_2\Gamma_3D_2$ also appeared in the case of a resonance of the second kind, and leads to the decay rate $t^{-1}$ as in the proof of Theorem~\ref{thm:res2}.

The terms arising from the operator $\frac{D_2}{\lambda^2}$ are more complicated. Decomposing
$$R_0^+VR_0^+v\frac{D_2}{\lambda^2}vR_0^+VR_0^+ - R_0^-VR_0^-v\frac{D_2}{\lambda^2}vR_0^-VR_0^-$$
by~\eqref{alg fact}, the nonzero terms all contain a difference $R_0^+ - R_0^-$, which is a constant multiple of
$\frac{\lambda}{r} J_1(\lambda r)$. Hence the most singular term to consider is
$$
\frac{1}{\lambda r} J_1VG_0vD_2vG_0VG_0,
$$
and similar terms with $J_1$ changing places with any of the operators $G_0$.
The contribution of this to the Stone's formula leads to $t^{-1}$ decay after an integration by parts by considering the cases $\lambda r\ll 1$ and $\lambda r\gtrsim 1$ separately.  For $\lambda r\ll 1$, ignoring the operator
$VG_0vD_2vG_0VG_0$, we use \eqref{J0 def} to bound
\begin{align*}
	\bigg[\int_0^\infty e^{it\lambda^2} \lambda \chi(\lambda) [1+\widetilde O_1(\lambda^2 r^2)]\, d\lambda
	\bigg] \les \frac{1}{t}\int_0^\infty \chi'(\lambda)\, d\lambda +\frac{1}{t} \int_0^{1/r} \lambda r^2 \, d\lambda 
\les \frac{1}{t}.
\end{align*}
Here we used the support condition $\lambda \les \frac{1}{r}$ in the second integral.

On the other hand, if $\lambda r\gtrsim 1$, we use the asymptotics \eqref{JYasymp2} and bound
\begin{align*}
	\int_0^\infty e^{it\lambda^2} \lambda \chi(\lambda) \frac{e^{\pm i\lambda r}}{r^2}\omega_{\pm}(\lambda r)
	\, d\lambda.
\end{align*} 
Integrating by parts once, and using the support condition $\lambda \gtrsim \frac{1}{r}$ we have the bound
\begin{align*}
	\frac{1}{t}\int_{1/r}^1 \frac{1}{\lambda^{\f12}r^{\f32}}+ \frac{1}{\lambda^{\f32}r^{\f32}}\, d \lambda
	\les \frac{1}{t} (1+r^{-2}) \les \frac{1}{t}
\end{align*}
as we have $r\gtrsim 1$.

\end{proof}

\section{Four dimensional wave equation with 
potential}\label{sec:wave}

In this section we sketch the argument for Theorem~\ref{thm:wave}.  As 
we can use much of the analysis for the evolution of the
Schr\"odinger operator in the previous sections to
understand the wave equation, we provide only a
brief sketch of the proof.
In Sections~\ref{sec:first}, \ref{sec:second} and \ref{sec:third}
to obtain a $t^{-1}$  decay rate for various terms in the evolution we
needed to bound integrals of the form
\begin{align*}
	\int_0^\infty e^{it\lambda^2}\lambda \mathcal E(\lambda)\, d\lambda
\end{align*}
where $\mathcal E(\lambda)$ is supported on 
$\lambda\ll 1$ and $\mathcal E(\lambda)=\widetilde O_1(1+1/\log(\lambda))$ or smaller.  We then integrated by parts
once to bound with
\begin{align*}
	\bigg|\int_0^\infty e^{it\lambda^2}\lambda \mathcal E(\lambda)\, d\lambda\bigg| 
	&\les \frac{|\mathcal E(0)|}{t}
	+\frac{1}{t}\int_0^\infty |\mathcal E'(\lambda)| 
	\, d\lambda\les \frac1t.
\end{align*}
We can similarly control the evolution of the cosine and
sine operators, \eqref{cos evol} and \eqref{sin evol}
by a similar argument,
\begin{align*}
	\bigg|\int_0^\infty \sin(t\lambda)\mathcal E(\lambda)\, d\lambda\bigg| 
	&\les \frac{|\mathcal E(0)|}{t}
	+\frac{1}{t}\int_0^\infty |\mathcal E'(\lambda)|
	\, d\lambda\les \frac1t.
\end{align*}
So that the analysis in controlling the final integral
of $|\mathcal E'(\lambda)|$ follows for the sine operator
exactly from the analysis of the Schr\"odinger evolution.
For the cosine operator, we have an extra power of 
$\lambda$, this integral is even better since 
$\lambda \ll 1$. This yields the desired bounds except for the most singular terms which arise when there is a resonance of first or third kind at zero energy.

We now sketch the argument for  
the most singular terms in the cases of resonances of the first
or third kind at zero for  the cosine evolution
\eqref{cos evol}. This immediately follows from the bound below, 
which is a modification of Lemma~\ref{log decay}, and
is proven analogously.

\begin{lemma}

	If $\mathcal E(\lambda)=\widetilde O_1((\lambda \log \lambda)^{-2})$, then 
	$$
		\bigg|\int_0^\infty \cos(t\lambda)
		\lambda \chi(\lambda)
		\mathcal E(\lambda)\, d\lambda\bigg| 
		\les \frac{1}{\log t}, \qquad t>2.
	$$	

\end{lemma}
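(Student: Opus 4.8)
The plan is to follow the proof of Lemma~\ref{log decay} almost verbatim; the only structural change is dictated by the phase. Since $\cos(t\lambda)$ oscillates on the scale $\lambda\sim 1/t$ rather than the scale $\lambda\sim t^{-1/2}$ relevant for $e^{it\lambda^2}$, I would split the integral at $\lambda=1/t$,
\begin{align*}
	\int_0^\infty \cos(t\lambda)\lambda\chi(\lambda)\mathcal E(\lambda)\, d\lambda
	=\int_0^{1/t}\cos(t\lambda)\lambda\chi(\lambda)\mathcal E(\lambda)\, d\lambda
	+\int_{1/t}^\infty \cos(t\lambda)\lambda\chi(\lambda)\mathcal E(\lambda)\, d\lambda.
\end{align*}
For the first piece the oscillation plays no role: using $|\lambda\mathcal E(\lambda)|\les \f{1}{\lambda(\log\lambda)^2}$ I would bound it directly by $\int_0^{1/t}\f{d\lambda}{\lambda(\log\lambda)^2}=\big[-\tfrac{1}{\log\lambda}\big]_0^{1/t}\les \f{1}{\log t}$. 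This is the term that saturates the claimed bound.

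For the tail I would integrate by parts once, writing $\cos(t\lambda)=\f1t\,\f{d}{d\lambda}\sin(t\lambda)$. This is in fact cleaner than in the Schr\"odinger case, since differentiating the phase produces a clean factor of $t$ rather than $t\lambda$. The boundary term at infinity vanishes by the cutoff $\chi$, while at $\lambda=1/t$ it equals $\f{\sin(1)}{t}\cdot\f1t\,\chi(1/t)\mathcal E(1/t)$; because $\mathcal E(1/t)\les \f{t^2}{(\log t)^2}$ this is $\les \f{1}{(\log t)^2}$. The remaining contribution is $\f1t\int_{1/t}^{\lambda_1}\big|\f{d}{d\lambda}\big(\lambda\chi(\lambda)\mathcal E(\lambda)\big)\big|\, d\lambda$, and the $\widetilde O_1$ hypothesis gives $\big|\f{d}{d\lambda}(\lambda\chi\mathcal E)\big|\les \f{1}{\lambda^2(\log\lambda)^2}$ away from the support of $\chi'$ (the $\chi'$ term is supported near $\lambda_1$, where $\mathcal E$ is bounded, and contributes only $O(1/t)$).

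The one estimate requiring care, and the main obstacle, is therefore
\begin{align*}
	\f1t\int_{1/t}^{\lambda_1}\f{d\lambda}{\lambda^2(\log\lambda)^2}\les \f{1}{\log t},
\end{align*}
since crudely bounding $(\log\lambda)^2$ below by a constant would only yield $O(1)$ and throw away the logarithmic gain. I would instead substitute $u=1/\lambda$ to reduce this to $\f1t\int_{1/\lambda_1}^{t}\f{du}{(\log u)^2}$ and then split the $u$-integral at $u=\sqrt t$: on $[\sqrt t,t]$ one has $\log u\ge \f12\log t$, so that piece is $\les \f{t}{(\log t)^2}$, while on $[1/\lambda_1,\sqrt t]$ the integrand is bounded by a constant, so that piece is $\les \sqrt t$. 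Dividing by $t$ gives $\f{1}{(\log t)^2}+\f{1}{\sqrt t}\les \f{1}{\log t}$ for $t>2$, which completes the argument.
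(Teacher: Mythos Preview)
Your proof is correct and follows essentially the same approach as the paper, which simply states that the lemma ``is a modification of Lemma~\ref{log decay}, and is proven analogously.''  Your adaptation is the natural one: the split at $\lambda=1/t$ rather than $t^{-1/2}$ is exactly what the cosine phase dictates, and your secondary split of the tail at $\lambda=t^{-1/2}$ (i.e., $u=\sqrt{t}$) plays the same role as the $t^{-1/4}$ split in the proof of Lemma~\ref{log decay}.
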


Unfortunately, the evolution of the sine operator,
\eqref{sin evol}, behaves much worse, this is due to the following
bound.

\begin{lemma}

	If $\mathcal E(\lambda)=\widetilde O((\lambda \log \lambda)^{-2})$, then 
	$$
		\bigg|\int_0^\infty \sin(t\lambda)
		\chi(\lambda)
		\mathcal E(\lambda)\, d\lambda\bigg| 
		\les \frac{t}{\log t}, \qquad t>2.
	$$	

\end{lemma}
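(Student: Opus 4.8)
The plan is to follow the splitting of Lemma~\ref{log decay} at $\lambda=t^{-1}$, while tracking how the absence of the factor $\lambda$ in \eqref{sin evol} (compared with \eqref{cos evol}) degrades the bound from $1/\log t$ to $t/\log t$. Writing $\int_0^\infty \sin(t\lambda)\chi(\lambda)\mathcal E(\lambda)\,d\lambda = \int_0^{t^{-1}}+\int_{t^{-1}}^\infty$, on the first piece I would use $|\sin(t\lambda)|\le t\lambda$ together with $|\mathcal E(\lambda)|\les \lambda^{-2}(\log\lambda)^{-2}$ to reduce the integrand to $t\,\lambda^{-1}(\log\lambda)^{-2}$. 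Since the antiderivative of $\lambda^{-1}(\log\lambda)^{-2}$ is $-1/\log\lambda$, which vanishes as $\lambda\to0^+$ and equals $1/\log t$ at $\lambda=t^{-1}$, this piece contributes exactly $t\cdot\frac1{\log t}$, the claimed size.

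For the tail $\int_{t^{-1}}^\infty$ I would integrate by parts once via $\sin(t\lambda)=-t^{-1}\frac{d}{d\lambda}\cos(t\lambda)$. The endpoint at infinity drops out since $\chi$ is compactly supported, and the boundary term at $\lambda=t^{-1}$ is controlled by $t^{-1}|\mathcal E(t^{-1})|\les t^{-1}\cdot t^2(\log t)^{-2}=t(\log t)^{-2}$, which is better than needed. The derivative landing on $\chi$ yields a term supported in $[\lambda_1/2,\lambda_1]$, where $\mathcal E$ is bounded, hence $O(t^{-1})$; the only essential term is $t^{-1}\int_{t^{-1}}^{\lambda_1}|\mathcal E'(\lambda)|\,d\lambda$, which by $|\mathcal E'(\lambda)|\les\lambda^{-3}(\log\lambda)^{-2}$ reduces everything to estimating
\[
 I:=\int_{t^{-1}}^{\lambda_1}\frac{d\lambda}{\lambda^3(\log\lambda)^2}.
\]

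The main obstacle is that a crude bound discarding the logarithm gives only $I\les\int_{t^{-1}}^{\lambda_1}\lambda^{-3}\,d\lambda\sim t^2$, producing the useless estimate $t$ rather than $t/\log t$; the logarithmic gain must be preserved. I would recover it by a further integration by parts in $\lambda$, using $\lambda^{-3}=-\tfrac12\frac{d}{d\lambda}\lambda^{-2}$, which gives
\[
 I=\frac{t^2}{2(\log t)^2}-\frac{1}{2\lambda_1^2(\log\lambda_1)^2}+\int_{t^{-1}}^{\lambda_1}\frac{d\lambda}{\lambda^3|\log\lambda|^3}.
\]
Since $|\log\lambda|\ge|\log\lambda_1|$ on $[t^{-1},\lambda_1]$, the last integral is at most $|\log\lambda_1|^{-1}I$, and as $\lambda_1$ is small we may take $|\log\lambda_1|>2$, so this term is absorbed into the left-hand side. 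This yields $I\les t^2(\log t)^{-2}$, whence the tail is $\les t^{-1}\cdot t^2(\log t)^{-2}=t(\log t)^{-2}\les t/\log t$. Combining the two pieces gives the claim; note that only one $\lambda$-derivative of $\mathcal E$ is used, so the hypothesis $\widetilde O$ (rather than $\widetilde O_1$) is more than sufficient.
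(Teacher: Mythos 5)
Your proof is correct, and the first piece ($\lambda < t^{-1}$, using $|\sin(t\lambda)|\le t\lambda$ and the antiderivative $-1/\log\lambda$) is exactly what the paper does. On the tail, however, you take a genuinely different and heavier route: the paper never integrates by parts there. It simply bounds $|\sin(t\lambda)|\le 1$ and integrates the size estimate directly,
$$
\int_{t^{-1}}^\infty \frac{\chi(\lambda)}{\lambda^2(\log\lambda)^2}\,d\lambda \les \frac{t}{(\log t)^2},
$$
which one sees, e.g., by substituting $u=1/\lambda$ and splitting at $\sqrt t$ (or simply by noting the antiderivative behavior); this is already better than $t/\log t$. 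Your version instead spends the oscillation via $\sin(t\lambda)=-t^{-1}\frac{d}{d\lambda}\cos(t\lambda)$ and then needs the self-absorption trick with $\lambda^{-3}=-\tfrac12\frac{d}{d\lambda}\lambda^{-2}$ to keep the logarithm in $I=\int_{t^{-1}}^{\lambda_1}\lambda^{-3}|\log\lambda|^{-2}\,d\lambda\les t^2(\log t)^{-2}$ — a correct maneuver (the absorption is legitimate since $I$ is finite on the compact interval, and $|\log\lambda_1|>2$ holds under the standing assumption $\lambda_1\ll 1$), and your accounting of the boundary term $t(\log t)^{-2}$ and the $\chi'$ term $O(t^{-1})$ is right, yielding the same final bound. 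The trade-off: your argument consumes one derivative of $\mathcal E$ (available here, since in this paper's convention $\widetilde O$ with no subscript controls all derivatives, so your parenthetical is accurate), whereas the paper's argument uses only the pointwise bound $|\mathcal E(\lambda)|\les (\lambda\log\lambda)^{-2}$ and hence proves the lemma under a strictly weaker hypothesis — which is precisely why no integration by parts is the natural choice here: the integrand is absolutely integrable away from $0$, so oscillation buys nothing that raw decay does not already provide.
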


\begin{proof}
	\begin{align*}
		\bigg|\int_0^{\infty} \sin(t\lambda)
		\chi(\lambda)
		\mathcal E(\lambda)\, d\lambda\bigg|	
		&\les t\int_0^{t^{-1}}\frac{1}{\lambda (\log\lambda)^2}\, d\lambda + \int_{t^{-1}}^\infty \frac{\chi(\lambda)}{\lambda^2 (\log\lambda)^2}\, d\lambda 
		\les \frac{t}{\log t}.
	\end{align*}
\end{proof}

Theorem~\ref{thm:wave} now follows from 
the arguments in Theorems~\ref{thm:res1}, \ref{thm:res2} and \ref{thm:res3} with 
the modification described above.

\section{Spectral subspaces related to $-\Delta+V$}\label{sec:spec}

We characterize the subspaces and their relation to the invertibility
of operators in our resolvent expansions.  The results below are essentially
Lemmas 5--7 of \cite{ES2} modified to suit four
spatial dimensions.

\begin{lemma}\label{lem:S1char}

	Suppose $|V(x)|\les \la x\ra^{-4}$.  Then 
	$f\in S_1L^2\setminus\{0\}$
	if and only if $f=wg$ for some $g\in L^{2,0-}\setminus\{0\}$ such that
	$$
		(-\Delta+V)g=0
	$$
	holds in the sense of distributions.
	
\end{lemma}

\begin{proof}

	We first note that
	$$
		(-\Delta+V)g=0 \quad \Leftrightarrow \quad
		(I+G_0V)g=0.
	$$	
	First, suppose that $f\in S_1L^2\setminus\{0\}$.  Then
	$(U+vG_0v)f=0$, and multiplying by $U$, one has
	$$
		f(x)=-w(x)G_0f=\frac{w(x)}{4\pi^2} \int_{\R^4}
		\frac{v(y)f(y)}{|x-y|^2}\, dy.
	$$
	Accordingly, we define
	\begin{align}
		g(x)=\frac{1}{4\pi^2} \int_{\R^4}
		\frac{v(y)f(y)}{|x-y|^2}\, dy \qquad \big(=-G_0vf(x)\big).
	\end{align}
	Since $vf\in L^{2,2}$, we have that 
	$g\in L^{2,0-}$ by viewing $G_0$ as a mutliple of
	the Riesz potential, see Lemma~2.3 in \cite{Jen}.  
	Further $f(x)=w(x)g(x)$ and
	$$
		g(x)=-G_0vf(x)=-G_0Vg(x), \quad \Rightarrow \quad
		(I+G_0V)g(x)=0.
	$$
	Secondly, assume $f=wg$ for $g$ a
	non-zero distributional solution to
	$(-\Delta+V)g=0$.  It is clear that $f\in L^{2,2-}$ and now
	\begin{align*}
		(U+vG_0v)f(x)=v(x)g(x)+v(x)G_0Vg(x)
		=v(x)(I+G_0V)g(x)=0.
	\end{align*}
	Thus showing that $f\in S_1L^2$.

\end{proof}

Recall that $S_2$ is the projection onto the kernel of $S_1PS_1$.
Note that for $f\in S_2L^2$, since $S_1,S_2$ and $P$ are projections
and hence self-adjoint we have
$$
	0=\la S_1PS_1 f, f\ra=\la Pf,Pf\ra=\|Pf\|^2_2
$$
Thus $PS_2=S_2P=0$.

\begin{lemma}

	Suppose $|V(x)|\les \la x\ra^{-4-}$.  Then 
	$f\in S_2L^2\setminus\{0\}$
	if and only if $f=wg$ for some $g\in L^{2}\setminus\{0\}$ such that
	$$
		(-\Delta+V)g=0
	$$
	holds in the sense of distributions.

\end{lemma}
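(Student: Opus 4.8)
The plan is to reduce the statement to the already-established characterization of $S_1L^2$ in Lemma~\ref{lem:S1char}, supplemented by a single scalar condition. Since $S_2$ is the orthogonal Riesz projection onto the kernel of the self-adjoint, non-negative operator $T_1=S_1PS_1$ on $S_1L^2$, for $f\in S_1L^2$ one computes (using $S_1f=f$ and the self-adjointness of $S_1,P$) that $\la T_1f,f\ra=\la Pf,f\ra=\|V\|_1^{-1}|\la v,f\ra|^2$, whence
\begin{align*}
	f\in S_2L^2 \quad\Longleftrightarrow\quad f\in S_1L^2 \text{ and } \la v,f\ra=0.
\end{align*}
Writing $f=wg$ with $w=Uv$ and $g=-G_0vf=-G_0Vg$ as in Lemma~\ref{lem:S1char}, one has $vf=Vg$ and hence $\la v,f\ra=\int_{\R^4}V(y)g(y)\,dy$. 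Thus the entire lemma comes down to showing that, for such a distributional solution $g$, membership in $L^2(\R^4)$ is equivalent to the vanishing of the moment $\int Vg$.

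For the forward direction I would first note $S_2L^2\subseteq S_1L^2$, so Lemma~\ref{lem:S1char} gives $f=wg$ with $g=-G_0vf\in L^{2,0-}$ solving $(-\Delta+V)g=0$. Since $PS_2=S_2P=0$ was established just before the lemma and $S_2f=f$, we get $Pf=0$, i.e. $\int Vg=0$; it then remains only to upgrade the decay of $g$ from $L^{2,0-}$ to $L^2$. For the converse, $g\in L^2\subseteq L^{2,0-}$ already gives $f=wg\in S_1L^2$ by Lemma~\ref{lem:S1char}, and I must show the eigenfunction relation forces $\int Vg=0$; then $\la v,f\ra=0$ and the displayed equivalence places $f$ in $S_2L^2$.

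Both implications rest on the same asymptotic expansion of the Riesz potential $-G_0=(-\Delta)^{-1}$. Splitting the defining integral at $|y|=|x|/2$ and using $\tfrac{1}{|x-y|^2}=\tfrac{1}{|x|^2}+O(|y|/|x|^3)$ on $|y|\le|x|/2$, one expects
\begin{align*}
	g(x)=-G_0(Vg)(x)=\frac{1}{4\pi^2|x|^2}\int_{\R^4}V(y)g(y)\,dy+O(|x|^{-3}),\qquad |x|\to\infty.
\end{align*}
Because $|x|^{-2}\notin L^2(\R^4)$ near infinity while $|x|^{-3}\in L^2(\R^4)$, this expansion immediately yields $g\in L^2\Leftrightarrow\int Vg=0$, settling both directions at once.

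The main obstacle is rigorously justifying the remainder $O(|x|^{-3})$. The Taylor step on $|y|\le|x|/2$ requires the first moment $\int|y|\,|V(y)|\,|g(y)|\,dy$ to be finite, and the tail on $|y|\ge|x|/2$ must be shown to be $O(|x|^{-3})$ as well; this is exactly where the hypothesis $|V(x)|\les\la x\ra^{-4-}$ enters, since combining it with $g\in L^{2,0-}$ and Cauchy--Schwarz (or the weighted convolution bounds of Lemmas~\ref{EG:Lem} and \ref{bracket decay}) controls both contributions. I expect the careful bookkeeping of these weighted estimates, rather than any conceptual difficulty, to be the delicate part of the argument.
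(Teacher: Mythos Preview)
Your proposal is correct and follows the same strategy as the paper: reduce to Lemma~\ref{lem:S1char} plus the scalar condition $\la v,f\ra=0$, and then show that for $g=-G_0(vf)$ one has $g\in L^2\Leftrightarrow\int vf=0$ by subtracting the leading $|x|^{-2}$ profile. The only difference is in the implementation of this last step: rather than a pointwise Taylor remainder $O(|x|^{-3})$ (which is awkward to justify since $|x-y|^{-2}$ is the critical local singularity in $\R^4$, so $G_0(vf)$ need not be pointwise bounded), the paper subtracts $\tfrac{1}{1+|x|^2}$, uses the kernel inequality $\big|\tfrac{1}{|x-y|^2}-\tfrac{1}{1+|x|^2}\big|\les\tfrac{\la y\ra}{\la x\ra|x-y|^2}+\tfrac{\la y\ra}{|x-y|\la x\ra^2}$, and invokes the weighted-$L^2$ mapping properties of the Riesz potentials $I_2,I_3$ to place the remainder directly in $L^2$; you will likely end up doing something equivalent once you carry out the ``careful bookkeeping'' you anticipate.
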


\begin{proof}

	Assume first that $f\in S_2L^2\setminus\{0\}$.  Since 
	$S_2\leq S_1$, using Lemma~\ref{lem:S1char}, we need
	only to show that $g\in L^2$.  Since $Pf=0$ we have
	$$
		\int_{\R^4} v(y)f(y)\, dy=0.
	$$
	Using this, our definition of $g(x)$ and \eqref{G0 def}
	we have
	\begin{align*}
		g(x)=\frac{1}{4\pi^2}\int_{\R^4} \bigg[\frac{1}{|x-y|^2}
		-\frac{1}{1+|x|^2} \bigg]v(y)f(y)\, dy
	\end{align*}
	Using
	\begin{align*}
		\bigg| \frac{1}{|x-y|^2}-\frac{1}{1+|x|^2}\bigg|
		\les \frac{ \la y\ra }{\la x\ra |x-y|^2}
		+\frac{\la y\ra}{|x-y|\la x\ra^2} 
	\end{align*}
	and noting that
	$\la \cdot\ra vf\in L^{2,1+}$, the Riesz potential $I_2$ maps  $L^{2,1+}$ to $L^{2,-1}$, and $I_3$ maps $L^{2,1+}$ to $L^{2,-2}$
	shows that $g\in L^2$ as desired.
	
	On the other hand, if $f=wg$ as in the hypothesis we have
	\begin{align}\label{gandf}
		g(x)=\frac{1}{4\pi^2}\int_{\R^4} \bigg[\frac{1}{|x-y|^2}
		-\frac{1}{1+|x|^2} \bigg]v(y)f(y)\, dy
		+\frac{1}{4\pi^2(1+|x|^2)}\int_{\R^4}v(y)f(y)\, dy.
	\end{align}
	The first term and $g(x)$ are in $L^2$.  Thus, we must have
	that
	$$
		\frac{1}{4\pi^2(1+|x|^2)}\int_{\R^4}v(y)f(y)\, dy
		\in L^2(\R^4).
	$$
	This necessitates that $\int v(y)f(y)\, dy=0$, that is
	$0=Pf=S_1PS_1f$ and
	$f\in S_2L^2$ as desired.

\end{proof}
\begin{corollary}\label{ranks}
Suppose $|V(x)|\les \la x\ra^{-4-}$. Then 
$$\textrm{Rank}(S_1)\leq \textrm{ Rank}(S_2)+1. $$
\end{corollary}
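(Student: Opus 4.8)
The plan is to reduce the statement to the observation that the operator $T_1 = S_1 P S_1$, whose kernel in $S_1 L^2$ defines $S_2$, has rank at most one. Since $S_2$ is by construction the orthogonal projection onto $\ker(T_1)$ regarded as a subspace of $S_1 L^2(\R^4)$, we have $S_2 \le S_1$, so $S_1 - S_2$ is again an orthogonal projection and
$$
	\textrm{Rank}(S_1) - \textrm{Rank}(S_2) = \textrm{Rank}(S_1 - S_2) = \textrm{Rank}(T_1).
$$
Here the last equality uses that $S_1 L^2$ is finite-dimensional (Remark iii following Definition~\ref{resondef}) and that $T_1$ is self-adjoint on it, so that the range of $T_1$ is exactly the orthogonal complement within $S_1 L^2$ of its kernel $S_2 L^2$. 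It therefore suffices to prove $\textrm{Rank}(T_1) \le 1$.

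First I would note, directly from the definition \eqref{TandP}, that $P = \|V\|_1^{-1} v \la v, \cdot \ra$ is a rank-one operator, its range being the one-dimensional span of $v$. Using that $S_1$ is self-adjoint (Remark ii following Definition~\ref{resondef}), I would then factor
$$
	T_1 = S_1 P S_1 = \|V\|_1^{-1} (S_1 v) \la S_1 v, \cdot \ra,
$$
which is manifestly of rank at most one; its range is spanned by $S_1 v$, and it is the zero operator precisely when $S_1 v = 0$, which is the case $S_2 = S_1$ of a resonance of the second kind. Combining this with the display above yields $\textrm{Rank}(S_1) \le \textrm{Rank}(S_2) + 1$.

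The only substantive ingredient is the rank-one factorization of $T_1$, which is immediate from the product structure of $P$ in \eqref{TandP}; there is no genuine obstacle here.
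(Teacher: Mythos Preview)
Your proof is correct and, in fact, more direct than the paper's. Both arguments ultimately rest on the same rank-one structure, but they access it differently.

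The paper proves the corollary by invoking the characterization of $S_1L^2$ and $S_2L^2$ in terms of distributional solutions of $(-\Delta+V)g=0$ developed in the two preceding lemmas. Using the identity \eqref{gandf}, it observes that for any $f\in S_1L^2$ the associated solution $g$ equals a fixed non-$L^2$ profile $\tfrac{1}{4\pi^2(1+|x|^2)}$ times the scalar $\int vf$, plus an $L^2$ remainder; hence any two such solutions differ, modulo $L^2$, by a scalar multiple of one another. This is precisely the statement that the linear functional $f\mapsto \la v,f\ra$ on $S_1L^2$ has kernel $S_2L^2$, so the quotient is at most one-dimensional.

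Your argument bypasses the analytic characterization entirely: since $P=\|V\|_1^{-1}v\la v,\cdot\ra$ is visibly rank one, so is $T_1=S_1PS_1$, and rank--nullity on the finite-dimensional space $S_1L^2$ gives $\textrm{Rank}(S_1)-\textrm{Rank}(S_2)=\textrm{Rank}(T_1)\le 1$. This is cleaner and in fact requires only enough decay on $V$ for $S_1$ to be finite rank, not the full $\la x\ra^{-4-}$ hypothesis. The paper's route, while longer, has the conceptual payoff of tying the rank gap to the concrete distinction between $L^2$ eigenfunctions and genuine (non-$L^2$) resonance states, which is useful elsewhere in Section~\ref{sec:spec}; for the bare corollary, your argument is the more economical one.
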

\begin{proof}
It suffices to prove that if $f_1, f_2\in S_1(L^2)\backslash \{0\}$, then the corresponding distributional solutions $g_1, g_2$ of   the equation $(-\Delta+V)g=0$ satisfies
$$
g_2=cg_1+h
$$
for some $h\in L^2$ and a constant $c$. This follows immediately from the equation \eqref{gandf}.
\end{proof}

\begin{lemma}\label{vG1v kernel}

	If $|V(x)|\les \la x\ra^{-5-}$, then the kernel of 
	$S_2vG_1vS_2=\{0\}$ on $S_2L^2$.

\end{lemma}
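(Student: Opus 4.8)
The plan is to reduce the statement to a definiteness property of the quadratic form $f\mapsto \la G_1 vf, vf\ra$ on $S_2L^2$. Since $S_2vG_1vS_2$ is self-adjoint and $S_2$ has finite rank, triviality of its kernel is equivalent to showing $\la S_2vG_1vS_2 f,f\ra\neq 0$ for every nonzero $f\in S_2L^2$. So suppose $f\in S_2L^2$ lies in the kernel. Using $S_2f=f$, $S_2=S_2^*$, and that $v$ and $G_1$ have real symmetric kernels, pairing $S_2vG_1vS_2f=0$ against $f$ collapses to
$$\la G_1 vf, vf\ra = 0,$$
and it suffices to prove this form is strictly positive unless $f=0$.

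Next I would invoke the characterization of $S_2L^2$ as the zero-energy eigenspace: $f=wg$ for some $g\in L^2$ with $(-\Delta+V)g=0$, and $g=-G_0vf$. Writing $V=wv=Uv^2$ produces the two identities I will exploit, namely $vf=Vg$ and, from the eigenvalue equation, $Vg=\Delta g$. Moreover $f\in S_2L^2$ forces $Pf=0$, i.e. $\int vf=0$; this mean-zero condition is precisely what neutralizes the additive-constant ambiguity of the logarithmic kernel of $G_1$.

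The heart of the matter is the computation $\la G_1 vf, vf\ra=\|g\|_2^2$. I would carry it out from the pointwise identity $-\Delta_x G_1(x,y)=G_0(x,y)$, which follows from $\Delta\log|x|=2|x|^{-2}$ in $\R^4$ and identifies $G_1$ with $(-\Delta)^{-2}$ on mean-zero inputs. Setting $u:=G_1vf$, this identity together with $g=-G_0vf$ gives $\Delta u=g$, and Green's identity yields
$$\la G_1 vf, vf\ra=\la u, vf\ra=\la u,\Delta g\ra=\la \Delta u, g\ra=\|g\|_2^2,$$
where the third equality uses $vf=\Delta g$. Equivalently, on the Fourier side $\widehat{vf}=-|\xi|^2\hat g$ while $G_1$ acts as a positive constant times $|\xi|^{-4}$, so the form equals a positive multiple of $\int_{\R^4}|\xi|^{-4}|\widehat{vf}|^2\,d\xi=\int|\hat g|^2=\|g\|_2^2$ by Plancherel; the origin singularity is integrable exactly because $g\in L^2$, and $\widehat{vf}(0)=0$ removes the distributional ambiguity of $\widehat{\log|\cdot|}$ at $\xi=0$. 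Combining with the reduction above, $0=\|g\|_2^2$ forces $g=0$ and hence $f=wg=0$, so the kernel is trivial.

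The step I expect to be the main obstacle is rigorously justifying Green's identity (equivalently, the integrability of $\int|\xi|^{-4}|\widehat{vf}|^2$ and the vanishing of the distributional anomaly at the origin). This is where the hypothesis $|V|\les\la x\ra^{-5-}$ is used: it guarantees $vf=Vg\in L^1\cap L^2$ with a finite first moment, so that $\widehat{vf}$ is continuous with $\widehat{vf}(0)=\int vf=0$, and—via the expansion $g=-G_0Vg$ with $\int Vg=0$—that $g$ decays like $|x|^{-3}$ (so $\nabla g\sim|x|^{-4}$) while $u=-G_0g$ decays like $|x|^{-1}$ (so $\nabla u\sim|x|^{-2}$). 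Consequently the boundary terms satisfy $\int_{|x|=R}(u\,\D_n g-g\,\D_n u)\,dS=O(R^{-2})\to 0$ as $R\to\infty$. I would verify these decay rates by expanding $G_0(x,y)$ for large $|x|$ and applying Lemmas~\ref{EG:Lem} and~\ref{bracket decay}.
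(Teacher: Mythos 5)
Your proof is correct, and it reaches the paper's central identity $\la G_1 vf, vf\ra = \|g\|_2^2$ by a genuinely different route. The paper never invokes Green's identity or pointwise decay of $g$: it realizes $G_1$ as the limit of $\lambda^{-2}\big(R_0(\lambda^2)-G_0-\widetilde g_1(\lambda)\big)$ via the expansion \eqref{resolv expansion2}, uses $Pf=0$ to discard the $\widetilde g_1$ term, passes to the Fourier side where $\lambda^{-2}(R_0-G_0)$ has symbol $c\,\big(\xi^2(\xi^2+\lambda^2)\big)^{-1}$, and concludes by monotone convergence that $0=\la G_1vf,vf\ra = c\int_{\R^4}|\xi|^{-4}|\widehat{vf}|^2\,d\xi = \|G_0vf\|_2^2$. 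The MCT device is the main economy: the finiteness of $\int|\xi|^{-4}|\widehat{vf}|^2$ never has to be checked in advance (it is forced to vanish by the hypothesis), so no moment or decay estimates on $g$ are needed. Your route through the local identity $-\Delta_x G_1(x,y)=G_0(x,y)$ (which is correct in $\R^4$, with no delta contribution since $\log|x|$ is weaker than the fundamental solution) is more elementary and displays the positivity structure transparently, but it is the costlier bookkeeping option: you must verify $g=O(|x|^{-3})$, $\nabla g=O(|x|^{-4})$, $u=O(|x|^{-1})$, $\nabla u=O(|x|^{-2})$ to kill the boundary terms, and your observation that this hinges on writing $vf=Vg$ — so that $\la x\ra Vg\in L^1$ precisely because $\la x\ra^{-4-}\in L^2(\R^4)$ when $\beta>5$ — correctly locates where the weight hypothesis enters (note $\la x\ra vf\in L^1$ would \emph{fail} if one only used $f\in L^2$ and $v\les\la x\ra^{-5/2-}$, so this substitution is essential, and you made it). Your Plancherel variant is essentially the paper's computation with a different regularization: $C^1$ smoothness of $\widehat{vf}$ and $\widehat{vf}(0)=0$ in place of monotone convergence in $\lambda$. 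Two cosmetic remarks: your opening claim that triviality of the kernel is ``equivalent'' to non-vanishing of the quadratic form is loose for indefinite self-adjoint operators, but you only use the valid implication ($f$ in the kernel forces the form to vanish), which is exactly the paper's logic; and your signs are internally consistent with the genuine Newtonian potential $(-\Delta)^{-1}$, which is the intended reading of \eqref{G0 def} — in any case the quadratic conclusion $\la G_1vf,vf\ra=\|g\|_2^2>0$ for $f\neq 0$ is insensitive to these conventions.
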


\begin{proof}

	Assume that $f\in S_2L^2$ is in the kernel of $S_2vG_1vS_2$.
	That is,
	\begin{align*}
		0&=\la G_1vf,vf\ra
	\end{align*}
	Using the expansion in \eqref{resolv expansion2} and the fact 
	that $Pf=0$ for $f\in S_2L^2$, we have
	\begin{align*}
		0&=\la G_1vf,vf\ra\\
		&=\lim_{\lambda\to0} \big\la
		\frac{R_0-G_0-\widetilde g_1(\lambda)}
		{\lambda^2}vf,vf\big \ra 
		=\lim_{\lambda\to0} \big\la
		\frac{R_0-G_0}	{\lambda^2}vf,vf\big \ra \\
		&=\lim_{\lambda\to 0}\int_{\R^4}\bigg(\frac{-1}{4\pi^2\xi^2+\lambda^2}
		+\frac{1}{4\pi^2 \xi^2}
		\bigg)\widehat{vf}(\xi) \overline{\widehat{vf}}(\xi)\, d\xi\\
		&=\lim_{\lambda\to 0}
		\frac{1}{16\pi^4}
		\int_{\R^4}\frac{|\widehat{vf}|^2(\xi)}
		{\xi^2(\xi^2+\lambda^2)}\, d\xi
		=\frac{1}{16\pi^4}
		\int_{\R^4} \frac{|\widehat{vf}|^2}{\xi^4}\, d\xi
		=\la G_0 vf,G_0vf\ra
	\end{align*}
	where we used the monotone convergence theorem.  This shows that
	$\widehat{vf}=0$ and thus $vf=0$ and $f=0$.  

\end{proof}

\begin{lemma}
 
    The projection onto the eigenspace at zero is $G_0vS_2[S_2vG_1vS_2]^{-1}S_2vG_0$.

\end{lemma}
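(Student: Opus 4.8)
The plan is to identify the operator $G_0vS_2[S_2vG_1vS_2]^{-1}S_2vG_0$ with the orthogonal projection $P_{\mathcal E}$ onto the zero-energy eigenspace $\mathcal E=\ker_{L^2}(H)$ by evaluating both operators in one convenient basis. Write $D_2=[S_2vG_1vS_2]^{-1}$, which is a well-defined absolutely bounded operator on $S_2L^2$ by Lemma~\ref{vG1v kernel}. By the characterization of $S_2L^2$ established above, the assignment $f\mapsto g:=-G_0vf$ is a bijection from $S_2L^2$ onto $\mathcal E$, with inverse $g\mapsto wg$ (recall $w=Uv$, so that $vw=V$ and $g=-G_0Vg$). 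Since $U,v,G_0,G_1$ all have real kernels, $S_2L^2$ has a real orthonormal basis $\{f_j\}_{j=1}^d$; I would set $g_j:=-G_0vf_j$, a basis of $\mathcal E$, and let $C_{jk}:=\la g_j,g_k\ra$ denote its Gram matrix.

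First I would compute the matrix of $S_2vG_1vS_2$ in the basis $\{f_j\}$. The essential input is the polarized form of the computation inside the proof of Lemma~\ref{vG1v kernel}: for $f_j,f_k\in S_2L^2$ one has $Pf_j=Pf_k=0$, so the singular $g_1^\pm(\lambda)$ term drops out in the limit and $\la vG_1vf_j,f_k\ra=\la G_0vf_j,G_0vf_k\ra=\la g_j,g_k\ra=C_{jk}$. Because the $f_j$ are orthonormal, this says exactly that $S_2vG_1vS_2$ is represented by $C$ in this basis, and hence $D_2$ is represented by $C^{-1}$. In particular $C$ is invertible, consistent with the injectivity supplied by Lemma~\ref{vG1v kernel}.

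Next I would evaluate the bilinear form of $P:=G_0vS_2D_2S_2vG_0$ on test functions. Using the self-adjointness of $G_0$, $v$, and $S_2$ gives $\la\psi,P\phi\ra=\la S_2vG_0\psi,D_2S_2vG_0\phi\ra$, a pairing of two elements of $S_2L^2$. Their coordinates are read off from $\la f_k,S_2vG_0\phi\ra=\la G_0vf_k,\phi\ra=-\la g_k,\phi\ra$ and the analogous identity for $\psi$; passing these through the matrix $C^{-1}$ of $D_2$ collapses the form to $\la\psi,P\phi\ra=\sum_{j,k}\la\psi,g_j\ra\,(C^{-1})_{jk}\,\la g_k,\phi\ra$. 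This is precisely the bilinear form of the standard orthogonal projection onto $\mathcal E$ expressed through the (non-orthonormal) basis $\{g_j\}$, namely $P_{\mathcal E}=\sum_{j,k}g_j\,(C^{-1})_{jk}\,\la g_k,\cdot\ra$; a one-line check shows this operator fixes each $g_\ell$, is self-adjoint, and has range $\mathcal E$, hence equals the eigenprojection. Comparing the two forms gives $P=P_{\mathcal E}$.

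The computations are short, so the only places that require care are the mapping-property bookkeeping and the one substantive identity. I would make sure that $G_0vf_j\in L^2$ (the content of the $S_2$-characterization), that $S_2vG_0\phi\in S_2L^2$ for $\phi$ in a dense class so the finite-dimensional reduction is valid, and that the moves transposing $G_0$ are legitimate on the relevant weighted $L^2$ spaces. The genuinely substantive step is the polarized identity $\la vG_1vf_j,f_k\ra=\la g_j,g_k\ra$, which reprises Lemma~\ref{vG1v kernel} and relies crucially on $Pf=0$ to discard the logarithmically singular $g_1^\pm$ contribution. As a consistency check one can also recover the operator, up to sign, as the coefficient of $\lambda^{-2}$ in $R_V^\pm(\lambda^2)$: inserting the leading term of $M^\pm(\lambda)^{-1}$ from Proposition~\ref{prop:Minv2} into the symmetric resolvent identity \eqref{res_exp} and letting $R_0^\pm\to G_0$ produces a multiple of $G_0vS_2D_2S_2vG_0/\lambda^2$, and the requirement that this $\lambda^{-2}$ coefficient equal $-P_{\mathcal E}$ (the residue of the resolvent at an isolated $L^2$ eigenvalue) fixes the positive sign claimed.
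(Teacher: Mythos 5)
Your argument is correct and follows essentially the same route as the paper: choose an orthonormal basis of $S_2L^2$, use the bijection $f\mapsto -G_0vf$ onto the zero-energy eigenspace, identify the matrix of $S_2vG_1vS_2$ with the Gram matrix $\la \psi_i,\psi_j\ra$ of the eigenfunctions via the polarized form of the computation in Lemma~\ref{vG1v kernel}, and then verify that the resulting operator is self-adjoint, fixes each eigenfunction, and has range equal to the eigenspace. Your closing consistency check via the $\lambda^{-2}$ residue of $R_V^\pm(\lambda^2)$ is a nice supplementary observation but is not needed, as the direct basis computation already settles the sign.
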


\begin{proof}
 
  Let $\phi_j$, $j=1,2,\dots,N$ be an orthonormal basis for
  $S_2L^2$.  Then
  \begin{align*}
      0&=(U+vG_0v)\phi_j,\\
      0&=(I+wG_0v)\phi_j=\phi_j+wG_0v\phi_j.
  \end{align*}
  Let $\psi_j =-G_0v\phi_j$. Note that $\psi_j$'s are linearly independent and that
$$
\phi_j=w\psi_j,
$$  
 and hence
$$
 \psi_j=-G_0v\phi_j=-G_0V\psi_j.
$$
Therefore, for any $f\in L^2$ we have
  \begin{align*}
      S_2f&=\sum_{j=1}^N \la f, \phi_j\ra \phi_j, \\
      S_2vG_0f&=\sum_{j=1}^N \la S_2vG_0f, \phi_j\ra \phi_j=\sum_{j=1}^N \la f, G_0v\phi_j\ra \phi_j
      =-\sum_{j=1}^N \la f,\psi_j\ra \phi_j
  \end{align*}
  Let $A_{ij}$ be the matrix representation of $SvG_2vS$ with respect to $\{\phi_j\}_{j=1}^N$.  That is,
  \begin{align*}
      A_{ij}=\la \phi_i,S_2vG_1vS_2\phi_j\ra=\la G_0v\phi_i, G_0v \phi_j\ra
      =\la G_0V\phi_i, G_0V\phi_j\ra=\la \psi_i, \psi_j\ra.
  \end{align*}
  Denoting $Q=G_0vS_2[S_2vG_1vS_2]^{-1}S_2vG_0$, for $f\in L^2$ we have
  \begin{align*}
      Qf&=G_0vS_2[S_2vG_1vS_2]^{-1}S_2vG_0 f=G_0vS_2[S_2vG_1vS_2]^{-1}\Big(-\sum_{j=1}^N \la f,\psi_j\ra \phi_j\Big)\\
      &=-\sum_{j=1}^N G_0vS_2[S_2vG_1vS_2]^{-1}\phi_j \la f,\psi_j\ra=\sum_{i,j=1}^N G_0vS_2(A_{ij}^{-1})\phi_i \la f,\psi_j\ra\\
      &=-\sum_{i,j=1}^N G_0v\phi_i (A_{ij}^{-1})\la f,\psi_j\ra=\sum_{i,j=1}^N (A_{ij}^{-1})\psi_i \la f,\psi_j\ra.
  \end{align*}
  For $f=\psi_k$ we have
  \begin{align*}
      Q\psi_k&=\sum_{i,j=1}^N (A_{ij}^{-1}) \psi_i \la \psi_k, \psi_j\ra=\sum_{i,j=1}^N (A_{ij}^{-1})(A_{jk})\psi_i=\psi_k.
  \end{align*}
  Thus, we have that the range of $Q$ is the span of $\{\psi_j\}_{j=1}^N$ and is the identity on the range of $Q$.  Since $Q$ is
  self-adjoint, we are done.

\end{proof}

\end{document}